\theoremstyle{plain}
\newtheorem{thm}{Theorem}[section]
\newtheorem{lemma}[thm]{Lemma}
\newtheorem{cor}[thm]{Corollary}
\newtheorem{conj}[thm]{Conjecture}
\newtheorem{prop}[thm]{Proposition}
\theoremstyle{definition}
\newtheorem{rmk}[thm]{Remark}
\newtheorem{example}[thm]{Example}
\def\dim{\mathop{\hbox {dim}}\nolimits}
\newcommand{\fra}{\mathfrak{a}}
\newcommand{\frg}{\mathfrak{g}}
\newcommand{\frh}{\mathfrak{h}}
\newcommand{\frk}{\mathfrak{k}}
\newcommand{\frl}{\mathfrak{l}}
\newcommand{\frp}{\mathfrak{p}}
\newcommand{\frq}{\mathfrak{q}}
\newcommand{\frt}{\mathfrak{t}}
\newcommand{\fru}{\mathfrak{u}}
\newcommand{\bbC}{\mathbb{C}}
\newcommand{\bbR}{\mathbb{R}}
\newcommand{\bbZ}{\mathbb{Z}}
\newcommand{\caL}{\mathcal{L}}
\newcommand{\caR}{\mathcal{R}}
\newcommand{\be}{\begin {equation}}
\newcommand{\ee}{\end {equation}}
\newcommand{\bp}{\begin {proof}}
\newcommand{\ep}{\end {proof}}
\begin{document}

\title[Dirac series for $E_{6(-14)}$]
{Dirac series for $E_{6(-14)}$}

\author{Lin-Gen Ding}
\address[Ding]{College of Mathematics and Econometrics, Hunan University, Changsha 410082,
China}

\author{Chao-Ping Dong}
\address[Dong]{Mathematics and Science College, Shanghai Normal University, Shanghai 200234,
P.~R.~China}
\email{chaopindong@163.com}

\author{Haian He}
\address[He]{Department of Mathematics, College of Sciences, Shanghai University, Shanghai 200444, P.~R.~China}
\email{hebe.hsinchu@yahoo.com.tw}

\abstract{Up to equivalence, this paper classifies all the irreducible unitary representations with non-zero Dirac cohomology for the simple Lie group $E_{6(-14)}$, which is of Hermitian symmetric type. Each FS-scattered Dirac series of $E_{6(-14)}$ is realized as a composition factor of certain $A_{\frq}(\lambda)$ module.  Along the way, we have also obtained all the fully supported irreducible unitary representations of $E_{6(-14)}$ with integral infinitesimal characters.}
 \endabstract

\subjclass[2010]{Primary 22E46.}

\keywords{Dirac cohomology, Hermitian symmetric type, integral infinitesimal character, unitary representation.}

\maketitle
\section{Introduction}

As a sequel to \cite{DDY}, this article aims to classify the irreducible unitary representations with non-zero Dirac cohomology for the simple Lie group $E_{6(-14)}$, which is of Hermitian symmetric type.

Let $G_{\bbC}$ be a complex connected  simple algebraic group with finite center.
Let $\sigma: G_{\bbC} \to G_{\bbC}$ be a \emph{real form} of $G_{\bbC}$. That is, $\sigma$ is an antiholomorphic Lie group automorphism and $\sigma^2={\rm Id}$. Let $\theta: G_{\bbC}\to G_{\bbC}$ be the involutive algebraic automorphism of $G_{\bbC}$ corresponding to $\sigma$ via Cartan theorem (see Theorem 3.2 of \cite{ALTV}). Put $G=G_{\bbC}^{\sigma}$ as the group of real points. Note that $G$ must be in the Harish-Chandra class \cite{HC}.  Denote by $K_{\bbC}:=G_{\bbC}^{\theta}$, and put $K:=K_{\bbC}^{\sigma}$. Denote by $\frg_0$ the Lie algebra of $G$, and let
$$
\frg_0=\frk_0\oplus \frp_0
$$
be the  Cartan decomposition corresponding to $\theta$ on the Lie algebra level.

Let $H_f=T_f A_f$ be the $\theta$-stable fundamental Cartan subgroup for $G$. Then  $T_f$ is a maximal torus of $K$, and on the Lie algebra level,
$$
\frh_{f, 0}=\frt_{f, 0}\oplus \fra_{f, 0}
$$
is the unique $\theta$-stable fundamental Cartan subalgebra of $\frg_0$.  As usual, we drop the subscripts to stand for the complexified Lie algebras. For example, $\frg=\frg_0\otimes_{\bbR}\bbC$, $\frh_f=\frh_{f, 0}\otimes_{\bbR}\bbC$ and so on. We fix a non-degenerate invariant symmetric bilinear form $B(\cdot, \cdot)$ on $\frg$. Its restrictions to $\frk$, $\frp$, etc., will also be denoted by the same symbol.

Let $l$ be the rank of $\frg$. Fix a positive root system $\Delta^+(\frg, \frh_f)$, and let $\{\zeta_1, \dots, \zeta_l\}$ be the corresponding fundamental weights. Restricting the roots in $\Delta^+(\frg, \frh_f)$ to $\frt_f$ gives a positive root system $\Delta^+(\frg, \frt_f)$, which may no longer be reduced. Note that
$\Delta^+(\frg, \frt_f)$ is a union of compact positive roots $\Delta^+(\frk, \frt_f)$, and non-compact positive roots $\Delta^+(\frp, \frt_f)$. We denote by the half sum of roots in $\Delta^+(\frg, \frt_f)$ (resp., $\Delta^+(\frk, \frt_f)$) by $\rho$ (resp., $\rho_c$). Then $\rho_n:=\rho-\rho_c$
is the half sum of roots in $\Delta^+(\frp, \frt_f)$.
We will denote the Weyl groups corresponding to these root systems by $W(\frg, \frh_f)$, $W(\frg, \frt_f)$, $W(\frk, \frt_f)$.

Fix an orthonormal basis $Z_1, \dots, Z_n$ of $\frp_0$ with respect to
the inner product induced by the form $B(\cdot, \cdot)$. Let $U(\frg)$ be the
universal enveloping algebra of $\frg$ and let $C(\frp)$ be the
Clifford algebra of $\frp$ with respect to $B(\cdot, \cdot)$. The \emph{Dirac operator}
introduced by Parthasarathy is
\begin{equation}
D:=\sum_{i=1}^{n}\, Z_i \otimes Z_i\in U(\frg)\otimes C(\frp).
\end{equation}
Note that $D$ is independent of the choice of the
orthonormal basis $Z_i$ and it is $K$-invariant under the diagonal
action of $K$ given by adjoint actions on both factors.
Moreover, we can embed the Lie algebra $\frk$ diagonally into $U(\frg)\otimes C(\frp)$ via the following map:
$$
X\mapsto X_{\Delta}:=X\otimes 1 + 1 \otimes \sum_{i<j}\frac{1}{2}B(X, [Z_i, Z_j])Z_i Z_j, \quad X\in\frk.
$$
Denote the image of $\frk$ by $\frk_{\Delta}$, and let $\Omega_{\frk_{\Delta}}$ be its Casimir element. Then
\begin{equation}\label{D-square}
D^2=-\Omega_{\frg}\otimes 1 +\Omega_{\frk_{\Delta}} +(\|\rho_c\|^2-\|\rho\|^2)1\otimes 1,
\end{equation}
where $\Omega_{\frg}$ is the Casimir element for $\frg$. A byproduct is Parthasarathy's \emph{Dirac operator inequality} \cite[Lemma 2.5]{P2}, see \eqref{Dirac-inequality},
which effectively detects non-unitarity. Section \ref{sec-Omega3} will offer thousands of such examples. However, it is by no means sufficient for unitarity.

To sharpen the Dirac operator inequality, and to understand the unitary dual $\widehat{G}$ better, Vogan formulated the notion of Dirac cohomology in 1997 \cite{Vog97}.
Indeed, let $\widetilde{K}$ be  the subgroup of $K\times \text{Pin}\,\frp_0$ consisting of all pairs $(k, s)$ such that $\text{Ad}(k)=p(s)$, where $\text{Ad}: K\rightarrow \text{O}(\frp_0)$ is the adjoint action, and $p: \text{Pin}\,\frp_0\rightarrow \text{O}(\frp_0)$ is the pin double covering map. Namely, $\widetilde{K}$ is constructed from the following diagram:
\[
\begin{CD}
\widetilde{K} @>  >  > {\rm Pin}\, \frp_0 \\
@VVV  @VVpV \\
K @>{\rm Ad}>> {\rm O}(\frp_0)
\end{CD}
\]
Let $S_G$ be a spin module for
$C(\frp)$, and let $\pi$ be a
($\frg$, $K$)-module.
Then $D\in U(\frg)\otimes C(\frp)$ acts on $\pi\otimes S_G$. The \textbf{Dirac
cohomology} of a
($\frg$, $K$)-module $\pi$ is defined as the $\widetilde{K}$-module
\begin{equation}\label{def-Dirac-cohomology}
H_D(\pi)=\text{Ker}\, D/ (\text{Im} \, D \cap \text{Ker} D).
\end{equation}
Here we note that $\widetilde{K}$ acts  on $\pi$
through $K$ and on $S_G$ through the pin group
$\text{Pin}\,{\frp_0}$. Moreover, since ${\rm Ad}(k) (Z_1), \dots, {\rm Ad}(k) (Z_n)$ is still an orthonormal basis of $\frp_0$, it follows that $D$ is $\widetilde{K}$ invariant. Therefore, ${\rm  Ker} D$, ${\rm Im} D$, and $H_D(X)$ are once again $\widetilde{K}$ modules.

By setting  the linear functionals on $\frt_f$ to be zero on $\fra_f$, we regard $\frt_f^{*}$ as a subspace of $\frh_f^{*}$. The  Vogan conjecture  was proved by Huang and Pand\v zi\'c in Theorem 2.3 of \cite{HP}. Let us adopt its slight extension to possibly disconnected groups.

\begin{thm}{\rm (Theorem A of \cite{DH})}\label{thm-HP}
Let $\pi$ be an irreducible ($\frg$, $K$)-module.
Assume that the Dirac
cohomology of $\pi$ is nonzero, and let $\gamma\in\frt_f^{*}\subset\frh_f^{*}$ be any highest weight of a $\widetilde{K}$-type  in $H_D(X)$. Then the infinitesimal character $\Lambda$ of $\pi$ is conjugate to
$\gamma+\rho_{c}$ under the action of the Weyl group $W(\frg,\frh_f)$.
\end{thm}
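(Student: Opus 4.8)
The plan is to reproduce the argument of Huang and Pand\v zi\'c, which converts the statement into a computation of the action of the center $Z(\frg)$ on $H_D(\pi)$ and rests on Parthasarathy's square formula for $D$ together with the algebraic form of Vogan's conjecture. First I would record the identity
\[
D^{2}\;=\;-\,\Omega_{\frg}\otimes 1\;+\;\Omega_{\frk_\Delta}\;+\;\big(\langle\rho_c,\rho_c\rangle-\langle\rho,\rho\rangle\big)\cdot 1
\]
in $U(\frg)\otimes C(\frp)$, where $\Omega_{\frg}$ is the Casimir element of $\frg$, $\frk_\Delta$ is the diagonal copy of $\frk$ in $U(\frg)\otimes C(\frp)$ given by $X\mapsto X\otimes 1+1\otimes\alpha(X)$ for the map $\alpha\colon\frk\to\frso(\frp)\cong\wedge^{2}\frp\subseteq C(\frp)$, $\Omega_{\frk_\Delta}$ is the Casimir of $\frk_\Delta$, and $\rho$ is the half-sum of the positive roots. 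Since $D$ is $\widetilde{K(\bbR)}$-invariant it commutes with $\frk_\Delta$; hence $\ker D$, $\im D\cap\ker D$ and $H_D(\pi)$ carry a $\frk_\Delta$-action compatible with the $\widetilde{K(\bbR)}$-action, and under $\frk_\Delta$ the $\widetilde{K(\bbR)}$-type of highest weight $\gamma$ is the irreducible $\frk$-module of highest weight $\gamma$, on which every $z'\in Z(\frk_\Delta)$ acts by the scalar $\chi^{\frk}_{\gamma+\rho_c}(z')$.

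The core step is the algebraic Vogan conjecture: there is an algebra homomorphism $\zeta\colon Z(\frg)\to Z(\frk_\Delta)$ such that, for every $z\in Z(\frg)$,
\[
z\otimes 1\;=\;\zeta(z)+Da+aD
\]
for some $a\in\big(U(\frg)\otimes C(\frp)\big)^{\frk}$. To establish this I would regard the super-commutator $x\mapsto d(x):=Dx-(-1)^{\deg x}xD$ as a differential on $\big(U(\frg)\otimes C(\frp)\big)^{\frk}$: one has $d^{2}(x)=[D^{2},x]$, and by the displayed formula $D^{2}$ differs from a central element of $U(\frg)\otimes C(\frp)$ by $\Omega_{\frk_\Delta}$, both of which commute with every $\frk$-invariant element, so $d^{2}=0$. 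Next I would filter $U(\frg)\otimes C(\frp)$ by total degree and pass to the associated graded, where $d$ degenerates to a Koszul-type differential whose cohomology can be computed by a Kostant-style argument, yielding $H\big((U(\frg)\otimes C(\frp))^{\frk},d\big)\cong Z(\frk_\Delta)$. Each $z\otimes 1$ is a $d$-cocycle, hence cohomologous to a unique element $\zeta(z)\in Z(\frk_\Delta)$, and multiplicativity of $\zeta$ comes from the ring structure on $d$-cohomology. Verifying $d^{2}=0$ is routine; carrying out the associated-graded cohomology computation is the step I expect to be the main obstacle.

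Finally I would assemble the pieces. Pick $v\in\ker D$ representing a nonzero class in $H_D(\pi)$ lying in the $\widetilde{K(\bbR)}$-type of highest weight $\gamma$. Since $\pi$ has infinitesimal character $\Lambda$, the element $z\otimes 1$ acts on all of $\pi\otimes S_G$ by the scalar $\chi_\Lambda(z)$; on the other hand $Dv=0$ gives $(z\otimes 1)v=\zeta(z)v+D(av)$, and $D(av)=\chi_\Lambda(z)v-\zeta(z)v$ lies in $\im D\cap\ker D$, so in $H_D(\pi)$ we obtain $\chi_\Lambda(z)[v]=\chi^{\frk}_{\gamma+\rho_c}(\zeta(z))[v]$, whence $\chi_\Lambda(z)=\chi^{\frk}_{\gamma+\rho_c}(\zeta(z))$ for all $z\in Z(\frg)$. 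To conclude I would identify $\zeta$, under the Harish-Chandra isomorphisms $Z(\frg)\cong\bbC[\frh_f^{*}]^{W(\frg,\frh_f)}$ and $Z(\frk_\Delta)\cong\bbC[\frt_f^{*}]^{W(\frk,\frt_f)}$, with the restriction map induced by the inclusion $\frt_f^{*}\hookrightarrow\frh_f^{*}$; this is pinned down on leading symbols by the filtration together with Parthasarathy's formula, and then propagated by Harish-Chandra. It follows that $\chi_\Lambda(z)=\chi^{\frg}_{\gamma+\rho_c}(z)$ for every $z\in Z(\frg)$, i.e.\ that $\Lambda$ and $\gamma+\rho_c$ lie in the same $W(\frg,\frh_f)$-orbit --- the assertion.
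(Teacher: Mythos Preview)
The paper does not supply a proof of this theorem: it is quoted verbatim as a result of Huang and Pand\v zi\'c \cite{HP} (see the sentence immediately preceding the statement), and the paper simply uses it as a black box. There is therefore no proof in the paper to compare your proposal against.

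That said, your proposal is a faithful outline of the original Huang--Pand\v zi\'c argument: Parthasarathy's formula for $D^{2}$, the differential $d(x)=Dx-(-1)^{\deg x}xD$ on $(U(\frg)\otimes C(\frp))^{\frk}$, the computation of its cohomology as $Z(\frk_\Delta)$ via a filtration/Koszul argument, the resulting homomorphism $\zeta\colon Z(\frg)\to Z(\frk_\Delta)$ with $z\otimes 1-\zeta(z)\in\im d$, and the identification of $\zeta$ with restriction under the Harish-Chandra isomorphisms. The one place where you are a bit breezy is the last step---showing that $\zeta$ really corresponds to restriction $\frh_f^{*}\supset\frt_f^{*}$---but this is exactly what Huang and Pand\v zi\'c establish, so your sketch is sound.
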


We care the most about the case that $\pi$ is unitary. Then  $D$ is self-adjoint with respect to a natural Hermitian inner product on $\pi\otimes S_G$, $\text{Ker} D \cap \text{Im} D=0$, and
\begin{equation}\label{Dirac-unitary}
H_D(\pi)=\text{Ker}\, D=\text{Ker}\, D^2.
\end{equation}
Note that $D^2$ has non-negative eigenvalue on any $\widetilde{K}$-type of $\pi\otimes S_G$. Using \eqref{D-square}, one can deduce that
\begin{equation}\label{Dirac-inequality}
\|\gamma+\rho_c\|\geq \|\Lambda\|,
\end{equation}
where $\gamma$ is a highest weight of any $\widetilde{K}$-type occurring in $\pi\otimes S_G$. This is Parthasarathy's Dirac operator inequality \cite{P2}. Moreover, by \cite[Theorem 3.5.2]{HP2}, the inequality \eqref{Dirac-inequality} becomes equality on certain $\widetilde{K}$-types of $\pi\otimes S_G$ if and only if $H_D(\pi)$ is non-vanishing.

Dirac cohomology is by definition an invariant for Lie group representations, and a natural problem is: could we classify $\widehat{G}^d$---the set consisting of all the members of $\widehat{G}$ having non-vanishing Dirac cohomology? As coined by Huang, we call members of $\widehat{G}^d$ the \emph{Dirac series} of $G$. In view of \eqref{Dirac-inequality} above,  the Dirac series of $G$  are exactly the members of $\widehat{G}$ on which Dirac inequality becomes equality.

The current paper aims to classify the Dirac series for $E_{6(-14)}$, by which we actually mean the connected simple real exceptional Lie  group $\texttt{E6\_h}$ in \texttt{atlas}, whose Lie algebra is denoted by ${\rm EIII}$ in Knapp \cite{Kn}. Here \texttt{atlas} \cite{At} is a software which computes many aspects of questions pertaining to Lie group representations. In particular, it detects unitarity based on the algorithm due to Adams, van Leeuwen, Trapa and Vogan in \cite{ALTV}. See Section \ref{sec-atlas} for a very brief account. Our main result is stated as follows.

\begin{thm}\label{thm-EIII}
The set $\widehat{\rm E_{6(-14)}}^d$ consists of $31$ FS-scattered representations (see Table \ref{table-EIII-scattered-part}) whose spin-lowest $K$-types are all unitarily small, and $270$  strings of representations (see Tables \ref{table-EIII-string-part-card0}---\ref{table-EIII-string-part-card5-part2}). Moreover,
each FS-scattered representation can be realized as a composition factor of certain $A_{\frq}(\lambda)$ module (see Table \ref{table-FS-Aqlambda}); each spin-lowest $K$-type of any  Dirac series of $E_{6(-14)}$ occurs with multiplicity one.
\end{thm}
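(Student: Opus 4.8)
The plan is to reduce the classification of $\widehat{\mathrm{E}_{6(-14)}}^d$ to a finite computation governed by Theorem \ref{thm-HP} and the equality case of the Dirac inequality \eqref{Dirac-inequality}, and then to carry out that computation inside \texttt{atlas}. First I would invoke Theorem \ref{thm-HP}: if $\pi\in\widehat{\mathrm{E}_{6(-14)}}^d$ then its infinitesimal character $\Lambda$ must be $W(\frg,\frh_f)$-conjugate to $\gamma+\rho_c$ for a highest weight $\gamma$ of a $\widetilde{K(\bbR)}$-type in $H_D(\pi)$; since $\gamma$ is (the differential of) a weight of a genuine $\widetilde{K(\bbR)}$-representation, this forces $\Lambda$ to lie in a specific finite set of $W$-orbits of rational, in fact half-integral, points in $\frh_f^{*}$. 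The key reduction is that because $\mathrm{E}_{6(-14)}$ is Hermitian symmetric, the relevant unitary representations with such infinitesimal characters are tightly constrained: one combines the Dirac inequality (which must become equality, by \cite[Theorem 3.5.2]{HP2}) with the known structure of the unitary dual. Concretely, I would enumerate all candidate infinitesimal characters $\Lambda$, and for each, list all irreducible $(\frg,K(\bbR))$-modules with that infinitesimal character using \texttt{atlas}, test unitarity via the Adams--van Leeuwen--Trapa--Vogan algorithm \cite{ALTV}, and then test the Dirac-cohomology condition directly.

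The second main step is to organize the resulting unitary representations into the two families asserted in the theorem. The $31$ \emph{FS-scattered} representations are those that do not fit into a continuous family; here I would compute, for each, the spin-lowest $K(\bbR)$-type, verify via the equality case of \eqref{Dirac-inequality} that $H_D(\pi)\neq 0$, check that this $K(\bbR)$-type is unitarily small (in the sense of Salamanca-Riba--Vogan, which \texttt{atlas} can verify), and confirm multiplicity one. The $270$ \emph{strings} arise because translation functors / coherent continuation produce one-parameter (or higher-rank-parameter) families of unitary representations sharing the Dirac-cohomology property; I would identify a representative of each string, describe the string combinatorially (this is where Tables \ref{table-EIII-string-part-card0}--\ref{table-EIII-string-part-card5-5} come in, indexed by the cardinality/shape of the parameter set), and argue that within a string the spin-lowest $K(\bbR)$-type and its multiplicity-one property are preserved. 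The multiplicity-one statement at the end would follow from a uniform analysis of the spin-lowest $K(\bbR)$-type decomposition, likely by exhibiting in each case that the relevant $\widetilde{K(\bbR)}$-type in $\pi\otimes S_G$ contributing to $H_D(\pi)$ appears exactly once.

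The hard part will be twofold. First, making the enumeration of infinitesimal characters genuinely finite and complete: one must argue that no unitary $\pi$ with nonzero Dirac cohomology escapes the list, which requires a careful a priori bound — coming from combining $\|\gamma+\rho_c\|=\|\Lambda\|$ with the constraint that $\gamma+\rho_c$ is dominant and the fact that unitary representations have infinitesimal character bounded in terms of $\rho$ (so $\Lambda$ lies in a compact region), together with the half-integrality forced by genuineness of $\widetilde{K(\bbR)}$-types. Since the relevant results on fully supported unitary representations with integral infinitesimal character are also being established here (as the abstract notes), some of this bounding is intertwined with the second new result of the paper and must be done carefully for the non-integral infinitesimal characters as well. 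Second, the bookkeeping: \texttt{atlas} works with specific parameter normalizations, and translating between its output, the $\rho$-shifts, and the classical highest-weight data of $K(\bbR)$-types for $\mathrm{E}_{6(-14)}$ is error-prone; the spin-lowest $K(\bbR)$-type computation in particular requires implementing Parthasarathy's Dirac operator spectrum on each $K(\bbR)$-type, which I expect to be the most delicate and computation-heavy part. I anticipate presenting the outcome as the explicit tables, with the proof consisting of the reduction argument above plus a description of how the tables were generated and verified.
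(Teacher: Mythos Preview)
Your broad outline matches the paper's, but two of the load-bearing reductions are mis-specified, and without them the argument does not close.

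First, the finiteness bound. It is \emph{not} true that unitary representations have infinitesimal character bounded in terms of $\rho$; the strings themselves show $\Lambda$ can be arbitrarily large. The actual mechanism is: the paper first proves (Proposition~\ref{prop-EIII-integer-inf-char}) that $\Lambda$ must be \emph{integral} (not merely half-integral, and no non-integral case survives --- your worry about that is unnecessary). Then, for a representation with final parameter $(x,\lambda,\nu)$ where $x$ is \emph{fully supported}, one has $\nu=\tfrac{1}{2}(\Lambda-\theta_x\Lambda)$ and $\|\nu\|\le\|\rho\|$ by \cite[Proposition~3.1]{D17}. Because $\theta_x$ acts nontrivially in every simple-root direction when $x$ is fully supported, this inequality, together with Salamanca-Riba's theorem (Theorem~\ref{thm-SR}) handling the strongly regular range, cuts the search down to a finite set $\Omega$ of about $46{,}000$ integral $\Lambda$. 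The paper then partitions $\Omega=\Omega_1\cup\Omega_2\cup\Omega_3$ using the u-small $K(\bbR)$-types: on $\Omega_3$ the Dirac inequality already fails on a lowest $K(\bbR)$-type, on $\Omega_1$ one finds a handful of unitary representations but all with $\|\Lambda\|<\|\rho_c\|$ hence $H_D=0$, and $\Omega_2$ yields exactly the $31$ FS-scattered representations. You should replace your vague compactness remark with this $\nu$-bound, which is the only reason the FS-scattered search terminates.

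Second, the organization into strings. The dichotomy FS-scattered versus string is \emph{defined} by whether the KGB element $x$ is fully supported; this is Theorem~A of \cite{D17}, which says every member of $\widehat{G(\bbR)}^d$ is cohomologically induced, in the weakly good range, from a FS-scattered representation of a Levi tensored with a unitary character. So the strings are not discovered by coherent continuation: they are enumerated by running over proper $\theta$-stable parabolics, taking the (already known or smaller-rank) FS-scattered Dirac series of each Levi, and inducing. Translation functors enter only at the last step, to show each string has no holes (see Example~\ref{exam-trans}, which uses \cite[Theorem~7.237]{KV} and Corollary~\ref{cor-thm-DH}). The multiplicity-one and u-small claims are verified case by case from the explicit spin-lowest $K(\bbR)$-types recorded in the tables; there is no uniform structural argument for them.
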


In the statements above, the notion of unitarily small (\emph{u-small} for short henceforth) $K$-type was introduced by Salamanca-Riba and Vogan in \cite{SV}, while that of spin-lowest $K$-type came from \cite{D}. They will be recalled  for $E_{6(-14)}$ in Section 3.

The way that we organize the infinitely many Dirac series for $E_{6(-14)}$ is due to Theorem A of \cite{D17}. More precisely, let $(x, \lambda, \nu)$ be the final \texttt{atlas} parameter of any Dirac series $\pi$ of $E_{6(-14)}$. Then $\pi$ is a \textbf{FS-scattered} representation if the \textbf{KGB element} $x$ (see Section \ref{sec-atlas}) is fully supported, i.e., if $\texttt{support} (x)$ equals $[0, 1, \dots, l-1]$. Otherwise, $\pi$ will be merged into a string of representations.

Thanks to Proposition 3.7 of \cite{HPP}, all the irreducible unitary highest/lowest weight modules of $E_{6(-14)}$ are Dirac series. However, the Dirac series will go beyond the unitary highest/lowest weight modules which were classified \cite{EHW,Ja} in the 1980s. See Example \ref{exam-non-hlwt}.

Along the way, we also meet some unitary representations with zero Dirac cohomology, yet their infinitesimal characters are irregular and rather small. It is conceivable that many of them should be unipotent representations. Thus we record them in Table \ref{table-EIII-FS-integral}, and it is worth mentioning the following.

\begin{cor}
All the fully supported irreducible unitary representations of $E_{6(-14)}$ with integral infinitesimal characters are exhausted by Tables \ref{table-EIII-FS-integral} and \ref{table-EIII-scattered-part}.
\end{cor}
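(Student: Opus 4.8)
The plan is to leverage the \texttt{atlas} software as the computational engine, exactly as in the proof of the main Theorem \ref{thm-EIII}, and to reduce the classification of fully supported irreducible unitary representations with integral infinitesimal character to a finite search. First I would invoke Theorem \ref{thm-HP} together with the Dirac inequality \eqref{Dirac-inequality}: any such representation $\pi$ has a real infinitesimal character $\Lambda$, and when $\Lambda$ is integral it is $W(\frg, \frh_f)$-conjugate to a dominant weight of the form $\sum_i a_i\zeta_i$ with $a_i\in\bbZ_{\geq 0}$. The key finiteness input is that unitarity forces $\Lambda$ to be bounded: by the Salamanca-Riba--Vogan theory of u-small $K(\bbR)$-types (or equivalently by the bound $\|\Lambda\|\leq\|\rho\|+\text{something}$ coming from the structure of the lowest $K(\bbR)$-type via \eqref{Dirac-inequality} applied to spin-lowest $K(\bbR)$-types), there are only finitely many candidate integral infinitesimal characters $\Lambda$ to examine. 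For $E_{6(-14)}$ (rank $6$, with $l=6$), this list is explicit and short.

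Next, for each such dominant integral $\Lambda$, I would enumerate via \texttt{atlas} all irreducible $(\frg, K(\bbR))$-modules with that infinitesimal character — this is a finite block computation — and run the \texttt{is\_unitary} test of \cite{ALTV} on each. Among the unitary ones, I would then read off the final parameter $(x,\lambda,\nu)$ and compute $\texttt{support}(x)$; the representation is fully supported precisely when $\texttt{support}(x)=[0,1,\dots,5]$. This isolates exactly the representations claimed to appear in Tables \ref{table-EIII-FS-integral} and \ref{table-EIII-scattered-part}. To complete the dichotomy, for each such fully supported unitary $\pi$ I would compute its Dirac cohomology (equivalently, check whether the Dirac inequality is an equality on some spin-lowest $K(\bbR)$-type, using \cite[Theorem 3.5.2]{HP2}): those with $H_D(\pi)\neq 0$ land in Table \ref{table-EIII-scattered-part} as the FS-scattered part of $\widehat{E_{6(-14)}}^d$, and those with $H_D(\pi)=0$ are recorded separately in Table \ref{table-EIII-FS-integral}. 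Since every FS-scattered member of $\widehat{E_{6(-14)}}^d$ automatically has integral (indeed small and irregular) infinitesimal character by Theorem \ref{thm-HP}, no fully supported integral unitary representation is missed.

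The main obstacle I anticipate is twofold. First, establishing the a priori bound on $\|\Lambda\|$ rigorously enough to guarantee the search is exhaustive: one must argue that outside the explicitly checked range the Dirac inequality \eqref{Dirac-inequality}, combined with the constraint that a genuinely fully supported representation cannot be an irreducible unitary highest weight module (those being handled by \cite{EHW, Ko} and having non-fully-supported KGB elements), is violated — or alternatively that such $\pi$ would be cohomologically induced from a proper $\theta$-stable parabolic in a way incompatible with full support. Second, the sheer bookkeeping: organizing the \texttt{atlas} output across all candidate infinitesimal characters, verifying multiplicity-one of spin-lowest $K(\bbR)$-types, and confirming that the tables are complete and non-redundant up to equivalence. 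Both obstacles are computational and organizational rather than conceptual; the conceptual skeleton is simply Theorem \ref{thm-HP} plus the \texttt{atlas} unitarity algorithm plus the support invariant of \cite{D17}.
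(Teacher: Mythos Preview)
Your proposal has a genuine gap in the finiteness step. You attempt to bound $\|\Lambda\|$ via the Dirac inequality \eqref{Dirac-inequality} or the u-small theory of \cite{SV}, but neither yields an a priori bound on the infinitesimal character of an arbitrary fully supported unitary representation. The Dirac inequality $\|\gamma+\rho_c\|\geq\|\Lambda\|$ constrains $\Lambda$ only once you already know a $K(\bbR)$-type $\gamma$ of $\pi$, and nothing prevents the $K(\bbR)$-types of a unitary $\pi$ from being large; worse, the representations in Table \ref{table-EIII-FS-integral} have \emph{zero} Dirac cohomology, so the inequality is strict on every $K(\bbR)$-type and gives you no handle at all on their infinitesimal characters. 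Your fallback suggestion---that fully supported representations cannot be cohomologically induced from a proper parabolic---is true but does not by itself produce a bound on $\Lambda$.

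The paper's actual finiteness argument is different and does not use Dirac cohomology at this stage. It invokes \cite[Proposition 3.1]{D17}, which says that for \emph{any} irreducible unitary $(\frg,K)$-module with parameter $(x,\lambda,\nu)$ one has $\|\nu\|\leq\|\rho\|$, i.e.\ $\|\Lambda-\theta_x\Lambda\|\leq\|2\rho\|$. The paper then checks directly (Proposition \ref{prop-fs-KGB} and the discussion before it) that when $x$ is \emph{fully supported}, the quadratic form $\|\Lambda-\theta_x\Lambda\|^2$ in the coordinates $a,\dots,f$ of $\Lambda$ has all diagonal coefficients positive and all off-diagonal coefficients nonnegative, so the bound on $\nu$ forces $\Lambda$ into a finite set $\Omega$. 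Strongly regular $\Lambda$ are then disposed of by Salamanca-Riba's Theorem \ref{thm-SR}. Only after this reduction does the paper run \texttt{is\_unitary} on $\Pi^{\rm fs}(\Lambda)$ for each $\Lambda\in\Omega$ and sort the survivors by whether $H_D$ vanishes. Your downstream steps (enumerate via \texttt{atlas}, test unitarity, test Dirac cohomology) match the paper, but without the correct $\nu$-bound the search is not provably exhaustive.
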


Calculations for $E_{6(-14)}$ and those in \cite{DD20,DDY,D19,DW} lead us to make the following speculation, which should imply \cite[Conjecture B]{D17} in view of Lemma 2.2 of \cite{DW}.

\begin{conj}
Let $\Lambda=\sum_{i=1}^{l} n_i\zeta_i\in\frh_f^*$ be the infinitesimal character of any fully supported representation in $\widehat{G}$,  where each $n_i$ is a nonnegative integer or half-integer. Then we must have $n_i=0$, $\frac{1}{2}$, or $1$ for every $1\leq i\leq l$.
\end{conj}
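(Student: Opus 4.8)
For $E_{6(-14)}$ the conjecture is confirmed by the Corollary above together with inspection of Tables~\ref{table-EIII-FS-integral} and~\ref{table-EIII-scattered-part}, so the real task is a uniform argument for general $G(\bbR)$, and the plan is to split it according to whether the dominant integral weight $\Lambda=\sum_{i=1}^{l}n_i\zeta_i$ is regular, i.e. whether every $n_i\geq 1$. In the regular case $\Lambda-\rho=\sum_i(n_i-1)\zeta_i$ is dominant, so $\Lambda$ is \emph{strongly regular}, and Salamanca-Riba's theorem applies: an irreducible unitary $(\frg,K(\bbR))$-module with strongly regular infinitesimal character is an $A_\frq(\lambda)$ for a $\theta$-stable parabolic $\frq=\frl\oplus\fru$ and a one-dimensional $\lambda$ in the good range. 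One then needs the dictionary between the \texttt{atlas} invariant $\texttt{support}(x)$ and cohomological induction — namely that the support of a good-range $A_\frq(\lambda)$ is, up to conjugacy, the set of indices in $[0,1,\dots,l-1]$ attached to the simple roots of $\frl$, so that $\frl\subsetneq\frg$ forces the support to be proper. Granting this, a fully supported $\pi$ with strongly regular $\Lambda$ must have $\frl=\frg$, hence $\pi$ is finite-dimensional; as $G(\bbR)$ is noncompact and semisimple, unitarity forces $\pi$ to be trivial, whence $\Lambda=\rho$ and $n_i=1$ for all $i$. Making the support/cohomological-induction correspondence precise — it should essentially be the defining content of $\texttt{support}$ — is the one nontrivial input in this half.

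The singular case — some $n_i=0$, where we must exclude $n_j\geq 2$ — is where the real difficulty lies, since there is no comparably clean classification of the irreducible unitary modules with singular integral infinitesimal character. Three routes seem worth pursuing. First, one could try to show that the fully supported irreducible unitary modules with dominant integral infinitesimal character form a \emph{finite} set: for every $K(\bbR)$-type $\mu$ of a unitary $\pi$ one has $\|\mu\|_{\mathrm{spin}}\geq\|\Lambda\|$ by the Dirac inequality, and if one grants (as in \cite{SV}, and as is verified in the cases treated here) that a fully supported unitary module has a spin-lowest $K(\bbR)$-type that is u-small, then $\|\Lambda\|$ is bounded above by the largest spin norm of a u-small $K(\bbR)$-type; one would then still have to upgrade this norm bound to the coordinatewise bound $n_i\leq 1$, using that full support forces the lowest $K(\bbR)$-type to involve every simple root, so that $\Lambda$ cannot be concentrated on a proper sub-collection of coordinates. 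Second, one could run the \texttt{atlas} deformation $\nu\mapsto t\nu$ of the final parameter $(x,\lambda,\nu)$: the KGB element $x$, and hence $\texttt{support}(x)$, is unchanged along the deformation, and at $t=0$ one reaches a (limit of) cohomologically induced module whose infinitesimal character is heavily constrained; controlling the admissible signature changes for a fully supported $x$ should then bound the coordinates of $\Lambda$ at $t=1$. Third, one could use associated varieties: full support ought to force $\mathrm{AV}(\mathrm{Ann}\,\pi)$ to be a large special nilpotent orbit, hence $\Lambda$ to be of ``unipotent type'', and then one checks directly that such infinitesimal characters have all coordinates $\leq 1$.

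In summary, the regular half should reduce to Salamanca-Riba's theorem plus the support/cohomological-induction dictionary, while the hard part will be the singular half together with the exact translation of the hypothesis $\texttt{support}(x)=[0,1,\dots,l-1]$ into a usable constraint — on the inducing datum $(\frq,\lambda)$, on the lowest $K(\bbR)$-type, or on the behaviour under the \texttt{atlas} deformation. I expect a complete uniform proof either to rest on the (in general still open) u-smallness of lowest $K(\bbR)$-types of unitary modules, or to proceed group-by-group in the spirit of the present treatment of $E_{6(-14)}$.
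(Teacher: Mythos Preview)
The statement you are attempting to prove is explicitly labeled a \emph{Conjecture} in the paper; the authors do not claim to prove it and offer no argument beyond the remark that ``many calculations lead us to make the following speculation.'' There is therefore no proof in the paper to compare your attempt against, and the appropriate baseline is that the statement is open in general.

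Your proposal is not a proof but a programme, and you say so yourself. In the regular case your reduction via Salamanca-Riba's theorem is the right idea, but the crucial step---that a good-range $A_\frq(\lambda)$ with $\frl\subsetneq\frg$ has a final parameter whose KGB element is \emph{not} fully supported---is asserted rather than proved. This is plausible and is implicit in the organization of \cite{D17} (FS-scattered versus strings), but pinning it down precisely in the \texttt{atlas} formalism is exactly the work you defer with ``Granting this.'' Even accepting it, the regular case only yields $\Lambda=\rho$, which is consistent with the conjecture but is the easy half.

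The singular case is, as you acknowledge, the substance of the conjecture, and here you offer three heuristics, each resting on an unproven hypothesis: that fully supported unitary modules have u-small spin-lowest $K(\bbR)$-types (this is itself conjectural in general, as you note), that signature changes along the $\nu\mapsto t\nu$ deformation can be controlled uniformly for fully supported $x$, or that full support forces a large associated variety. None of these is currently a theorem for general $G(\bbR)$, and your closing paragraph correctly identifies the proposal as conditional on at least one of them. So the honest assessment is that you have outlined why the conjecture is reasonable and isolated where the difficulty sits, but have not closed any of the gaps; this matches the paper's own stance that the statement remains a conjecture.
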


Finally, let us note a slight extension  of \cite[Theorem A]{D17}. Let
\begin{equation}\label{lattice-real-inf-char}
\mathbb{L}\subseteq i\frt_{f, 0}^*+\fra_{f, 0}^*
\end{equation}
 be any lattice which is invariant under the action of the Weyl group $W(\frg, \frh_f)$. Collect the members of $\widehat{G}$ whose infinitesimal characters belong to $\mathbb{L}$ as $\Pi_{\mathrm{u}}(G, \mathbb{L})$. Replacing Theorem 2.3 of \cite{HP} by the current assumption for $\mathbb{L}$, the same proof of \cite[Theorem A]{D17} leads to the following.

\begin{thm}\label{thm-A-extension}
Let $\mathbb{L}$ be as in \eqref{lattice-real-inf-char}.
For all but finitely many exceptions, any member $\pi\in \Pi_{\mathrm{u}}(G, \mathbb{L})$ is cohomologically induced from a member $\pi_{L}$ in $\widehat{L}$ that is in the good range. Here $L$ is the Levi factor of a proper $\theta$-stable parabolic subgroup of $G$.
\end{thm}

As a consequence, by using the translation functor, one can organize the infinite set $\Pi_{\mathrm{u}}(G, \mathbb{L})$ in a similar way as we express the Dirac series for $G$.
For instance, with more effort, one could pin down all the irreducible unitary representations of $E_{6(-14)}$ with integral infinitesimal characters.

The paper is organized as follows. We recall necessary background in Section 2, and review the structure of $E_{6(-14)}$ in Section 3. Then we report the FS-scattered Dirac series of $\rm E_{6(-14)}$ in Section 4, and give a summary of the strings  in Section 5. Examples will be illustrated carefully in Section 6. We check that any FS-scattered Dirac series of $E_{6(-14)}$ can be realized as a composition factor of an $A_{\frq}(\lambda)$ module in Section 7. All the strings are presented in Section 8.

We have built up several \texttt{Mathematica} files to facilitate the using of \texttt{atlas} for the group $E_{6(-14)}$. One of them is available via the link
\begin{verbatim}
https://www.researchgate.net/publication/331629415_EIII-ScatteredPart
\end{verbatim}
The codes there have been explained carefully so that an interested reader can pick them up without much difficulty.

We apologize that we are unable to give a neat presentation as the complex $E_6$ case \cite{D19}. Instead, we have to demonstrate all the Dirac series of $E_{6(-14)}$  in ten clumsy tables (namely, Table \ref{table-EIII-scattered-part}, Tables \ref{table-EIII-string-part-card0}---\ref{table-EIII-string-part-card5-part2}).  To make the reader feel slightly better, we note that there are various links between the Dirac series (hence the tables). For a glimpse, see Remark \ref{rmk-exam-EHW}, Example \ref{exam-series-hwt}, and Example \ref{exam-string-limit}.

\section{Preliminaries}
This section aims to  collect necessary preliminaries.

\subsection{$\texttt{atlas}$ height and lambda norm}\label{sec-lambda-spin}
Let us simply refer to a $\frk$-type by its highest weight $\mu$. Choose a positive root system $(\Delta^+)^\prime(\frg, \frh_f)$ making $\mu+2\rho_c$ dominant. Let $\rho^\prime$ be the half sum of roots in $(\Delta^+)^\prime(\frg, \frh_f)$. After \cite{SV}, we denote by $\lambda_a(\mu)$ the projection of $\mu+2\rho_c- \rho^\prime$ to the dominant Weyl chamber of $(\Delta^+)^\prime(\frg, \frh_f)$. Then
\begin{equation}\label{spin-norm}
\|\mu\|_{\rm{lambda}}:=\|\lambda_a(\mu)\|.
\end{equation}
Here $\|\cdot\|$ is the norm induced from the form $B(\cdot, \cdot)$. It turns out that this number  is  independent of the choice of  $(\Delta^+)^\prime(\frg, \frh_f)$, and it is the \emph{lambda norm} of the $\frk$-type $\mu$ \cite{Vog81}.  The \texttt{atlas} \emph{height} of $\mu$ can be computed via
\begin{equation}\label{atlas-height}
\sum_{\alpha\in (\Delta^+)^\prime(\frg, \frh_f)}\langle \lambda_a(\mu),  \alpha^{\vee}\rangle.
\end{equation}
Here $\langle \cdot, \cdot\rangle$ is the natural pairing between roots and co-roots.

\subsection{Cohomological induction}

Let $\frq= \frl\oplus\fru$ be  a $\theta$-stable parabolic subalgebra of $\frg$ with Levi factor $\frl$ and nilpotent radical $\fru$. Set $L=N_{G}(\frq)$.

Let us arrange that $\Delta(\fru, \frh_f)\subseteq \Delta^{+}(\frg,\frh_f)$.
Set $\Delta^{+}(\frl, \frh_f)=\Delta(\frl,
\frh_f)\cap \Delta^{+}(\frg,\frh_f)$. Let
$\rho^{L}$  denote the half sum of roots in
$\Delta^{+}(\frl,\frh_f)$,  and denote by $\rho(\fru)$ (resp., $\rho(\fru\cap\frp)$) the half sum of roots in $\Delta(\fru,\frh_f)$ (resp., $\Delta(\fru\cap\frp,\frh_f)$). Then
\begin{equation}\label{relations}
\rho=\rho^{L}+\rho(\fru).
\end{equation}

Let $Z$ be an ($\frl$, $L\cap K$)-module. Cohomological induction functors attach to $Z$ certain ($\frg, K$)-modules $\caL_j(Z)$ and $\caR^j(Z)$, where $j$ is a nonnegative integer.
Suppose that $Z$ has infinitesimal character $\lambda_L\in \frh_f^*$. After \cite{KV}, we say that
$Z$ is {\it good} or {\it in good range} if
\begin{equation}\label{def-good}
\mathrm{Re}\,\langle \lambda_L +\rho(\fru),\, \alpha^\vee \rangle >
0, \quad \forall \alpha\in \Delta(\fru, \frh_f).
 \end{equation} We say that $Z$ is {\it weakly good} if
\begin{equation}\label{def-weakly-good}
\mathrm{Re}\, \langle \lambda_L +\rho(\fru),\, \alpha^\vee \rangle
\geq 0, \quad \forall \alpha\in \Delta(\fru, \frh_f).
\end{equation}

\begin{thm}\label{thm-Vogan-coho-ind}
{\rm (\cite{Vog84} Theorems 1.2 and 1.3, or \cite{KV} Theorems 0.50 and 0.51)}
Suppose the admissible
 ($\frl$, $L\cap K$)-module $Z$ is weakly good.  Then we have
\begin{itemize}
\item[(i)] $\caL_j(Z)=\caR^j(Z)=0$ for $j\neq S(:=\emph{\text{dim}}\,(\fru\cap\frk))$.
\item[(ii)] $\caL_S(Z)\cong\caR^S(Z)$ as ($\frg$, $K$)-modules.
\item[(iii)]  if $Z$ is irreducible, then $\caL_S(Z)$ is either zero or an
irreducible ($\frg$, $K$)-module with infinitesimal character $\lambda_L+\rho(\fru)$.
\item[(iv)]
if $Z$ is unitary, then $\caL_S(Z)$, if nonzero, is a unitary ($\frg$, $K$)-module.
\item[(v)] if $Z$ is in good range, then $\caL_S(Z)$ is nonzero, and it is unitary if and only if $Z$ is unitary.
\end{itemize}
\end{thm}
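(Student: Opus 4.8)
This statement collects the foundational results on cohomological induction in the (weakly) good range, and the plan is to reproduce the arguments of \cite{Vog84} and \cite{KV}. First I would fix the realizations $\caR^{j}(Z)=(R^{j}\Gamma)\bigl(\pro_{\frq}^{\frg}(Z)\bigr)$ and $\caL_{j}(Z)=(L_{j}\Pi)\bigl(\ind_{\frq}^{\frg}(Z)\bigr)$, where $\pro$ and $\ind$ are the algebraic production and induction functors along $\frq\subseteq\frg$ (both exact, since $U(\frg)$ is free over $U(\frq)$), and $\Gamma$, $\Pi$ are the Zuckerman and Bernstein functors for the change of pairs $(\frg,L(\bbR)\cap K)\to(\frg,K)$. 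At this level one records the general facts, valid for every admissible $Z$: the modules $\pro_{\frq}^{\frg}(Z)$ and $\ind_{\frq}^{\frg}(Z)$ carry infinitesimal character $\lambda_L+\rho(\fru)$ (recall $\rho=\rho^{L}+\rho(\fru)$ from \eqref{relations}); the functors $\Gamma$ and $\Pi$ are computed by explicit ``Zuckerman'' complexes of length $2S$ built from the exterior algebra of $\frk/(\frl\cap\frk)$, so $\caR^{j}(Z)$ and $\caL_{j}(Z)$ automatically vanish for $j\notin[0,2S]$; and there are comparison maps (Easy Duality, Hard Duality) relating $\caR^{\bullet}$ to $\caL_{\bullet}$.

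The heart of the argument is part (i), the collapse of this range to the single middle degree $S=\dim(\fru\cap\frk)$. Here one invokes the Vanishing Theorem: a Casimir-type operator---assembled from the $\frg$- and $\frk$-Casimirs together with the Zuckerman differential---is shown to act invertibly on every off-middle cohomology group, the invertibility being an infinitesimal-character estimate that uses Kostant's description of $\fru$-(co)homology and, crucially, the weakly good inequality \eqref{def-weakly-good}. Consequently the complex is acyclic except in degree $S$, giving $\caR^{j}(Z)=\caL_{j}(Z)=0$ for $j\neq S$. I expect the precise form of this estimate---verifying that \eqref{def-weakly-good} really does force invertibility in all off-middle degrees simultaneously, with no slack---to be the main obstacle; it is the technical core of \cite{Vog84} and \cite{KV}, and weak goodness is exactly the hypothesis under which it holds.

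Granting (i), the remaining items fall into place. Part (ii) is Vogan's Hard Duality Theorem: with both constructions now concentrated in a single degree, the comparison maps collapse to an isomorphism $\caL_{S}(Z)\cong\caR^{S}(Z)$. For (iii), a $(\frg,K)$-module supported in the single cohomological degree $S$ with infinitesimal character $\lambda_L+\rho(\fru)$ is controlled by the Irreducibility Theorem: in good range, $\caL_{S}$ sends an irreducible $Z$ either to zero or to an irreducible module, detected on the bottom layer through the injective bottom-layer map, whose $K$-types are built from the $L(\bbR)\cap K$-types of $Z$ shifted by $2\rho(\fru\cap\frp)$. For (iv), one transports the invariant Hermitian form on $Z$ to $\caL_{S}(Z)$ along the canonical pairing on the derived functor module and shows, by a second positivity estimate---again powered by \eqref{def-weakly-good}---that the transported form is positive semidefinite; passing to the irreducible subquotient yields unitarity of $\caL_{S}(Z)$ whenever it is nonzero (the Signature Theorem). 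Finally, in the strictly good range \eqref{def-good} the bottom layer is nonzero, so $\caL_{S}(Z)\neq 0$ (the Nonvanishing Theorem); and since the bottom-layer map is injective and intertwines the forms, positive-definiteness of the transported form on $\caL_{S}(Z)$ is equivalent to positive-definiteness of the form on $Z$, which establishes the equivalence in (v).
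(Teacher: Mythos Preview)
The paper does not supply a proof of this theorem at all; it is quoted as background from \cite{Vog84} and \cite{KV} and used as a black box in the subsequent arguments (notably in Example~\ref{exam-trans}). So there is no ``paper's own proof'' to compare against---only the cited references.

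That said, your outline is a faithful roadmap of how those references establish the theorem: setting up $\caL_j$ and $\caR^j$ via derived Bernstein/Zuckerman functors applied to $\ind$ and $\pro$, invoking the Vanishing Theorem (driven by a Casimir-type positivity argument using Kostant's $\fru$-cohomology and the weakly good inequality) for (i), Hard Duality for (ii), the Irreducibility and Signature Theorems for (iii) and (iv), and bottom-layer nonvanishing in the strictly good range for (v). One small slip: in your discussion of (iii) you write ``in good range,'' but the hypothesis there is only weak goodness---irreducibility (or zero) of $\caL_S(Z)$ already holds under the weakly good assumption, via the bottom-layer analysis. Otherwise your sketch matches the architecture of \cite{KV}.
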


Take $\Lambda\in\frh_f^*$ such that it is dominant for $\Delta^+(\frg, \frh_f)$.
We say that $\Lambda$ is \emph{real} if it belongs to $i\frt_{f, 0}^* +\fra_{f, 0}^*$,
and $\Lambda$ is {\it strongly regular} if
\begin{equation}\label{def-weakly-good}
\langle \Lambda-\rho,\, \alpha^\vee \rangle
\geq 0, \quad \forall \alpha\in \Delta^+(\frg, \frh_f).
\end{equation}

\begin{thm}\label{thm-SR} \emph{(Salamanca-Riba \cite{Sa})}
Let $X$ be an irreducible $(\frg, K)$-module with a strongly regular real infinitesimal character. If $X$ is unitary, then it is an $A_{\frq}(\lambda)$ module in the good range.
\end{thm}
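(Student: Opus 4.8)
The plan is to treat the two implications separately. The ``if'' direction is essentially contained in Theorem~\ref{thm-Vogan-coho-ind}: if $X=A_{\frq}(\lambda)$ is in the good range, then, writing $\frq=\frl\oplus\fru$ and realizing $X=\caL_S(Z)$ with $Z=\bbC_\lambda$ a one-dimensional unitary character of $L(\bbR)$ and $S=\dim(\fru\cap\frk)$, the good-range hypothesis is exactly condition \eqref{def-good} with $\lambda_L=\lambda$; hence $Z$ is irreducible, unitary and in the good range, so Theorem~\ref{thm-Vogan-coho-ind}(iii),(v) gives that $\caL_S(Z)$ is a nonzero irreducible unitary $(\frg,K)$-module with infinitesimal character $\lambda+\rho(\fru)$. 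Thus a strongly regular $A_{\frq}(\lambda)$ in the good range is automatically unitary, and the content of the theorem is the converse.

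So assume $X$ is an irreducible unitary $(\frg,K)$-module with real infinitesimal character $\Lambda$, dominant for $\Delta^+(\frg,\frh_f)$, and strongly regular; fix a lowest $K$-type $\mu$ of $X$. The first step is to use unitarity and strong regularity to pin down $\Lambda$ in terms of $\mu$. Parthasarathy's Dirac operator inequality \cite[Lemma~2.5]{P2}, in the form \eqref{Dirac-inequality} applied to $\pi=X$ with $\gamma$ ranging over the highest weights of the $\widetilde{K(\bbR)}$-types of $X\otimes S_G$ coming from $\mu$, yields a lower bound for $\|\lambda_a(\mu)\|=\|\mu\|_{\mathrm{lambda}}$ (see \eqref{spin-norm}) in terms of $\|\Lambda\|$, while Vogan's lambda-norm estimate \cite{Vog81} yields the reverse bound $\|\mu\|_{\mathrm{lambda}}\le\|\Lambda\|$, valid for any irreducible module. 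The crux is that the correction terms in Parthasarathy's inequality can be absorbed precisely because $\langle\Lambda-\rho,\alpha^\vee\rangle\ge 0$ for all $\alpha\in\Delta^+(\frg,\frh_f)$; consequently both bounds force $\Lambda$ to be $W(\frg,\frh_f)$-conjugate to the value $\gamma+\rho_c\in\frt_f^*\subset\frh_f^*$ determined by the $\lambda_a$-data of $\mu$, i.e.\ the Dirac inequality is an equality on the relevant $\widetilde{K(\bbR)}$-types.

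The second step converts this data into a $\theta$-stable parabolic. Following Vogan's theory of lowest $K$-types, one attaches to $\mu$ a $\theta$-stable parabolic $\frq=\frl\oplus\fru$ (with $\frl$ the centralizer in $\frg$ of the generic part of $\lambda_a(\mu)$ and $\Delta(\fru,\frh_f)$ the roots made positive by the chamber used to define $\lambda_a(\mu)$) together with a candidate character $\lambda=\lambda(\mu)$ of $L(\bbR)$, so that $A_{\frq}(\lambda)$ has lowest $K$-type $\mu$ and, by the first step, the same infinitesimal character $\lambda+\rho(\fru)$ as $X$ (up to $W(\frg,\frh_f)$). When $\frl=\frg$ one is in the base case: $\frq=\frg$ and strong regularity forces $X$ to be finite-dimensional, hence equal to $A_{\frg}(\lambda)$. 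When $\frl\subsetneq\frg$ one argues by induction on $\dim\frg$ via cohomological induction in stages: a bottom-layer $K$-type argument, together with the (weakly) good-range analysis of Theorem~\ref{thm-Vogan-coho-ind}, produces an irreducible unitary $(\frl,L(\bbR)\cap K)$-module $Z$, strongly regular for $\frl$, with $X\cong\caL_S(Z)$; the inductive hypothesis identifies $Z$ with some $A_{\frq_L}(\lambda)$ for $\frl$, and then $X\cong A_{\frq}(\lambda)$ by Theorem~\ref{thm-Vogan-coho-ind}. Finally, the good-range condition \eqref{def-good} is read off from strong regularity: $\langle\Lambda,\alpha^\vee\rangle\ge\langle\rho,\alpha^\vee\rangle\ge 1$ for every $\alpha\in\Delta(\fru,\frh_f)$, and since $\Lambda=\lambda+\rho(\fru)$ up to $W(\frg,\frh_f)$ this gives $\langle\lambda+\rho(\fru),\alpha^\vee\rangle>0$.

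The main obstacle is the identification step. Knowing that $X$ and a candidate $A_{\frq}(\lambda)$ share a lowest $K$-type and an infinitesimal character is not by itself enough to conclude that they are isomorphic; ruling out other irreducible constituents with the same lowest $K$-type requires the full force of the Dirac-inequality analysis --- sharpness not only on $\mu$ but on every $K$-type of $X$ --- combined with Vogan's classification of irreducible $(\frg,K)$-modules by their lowest $K$-types and the theory of the bottom layer. The induction on $\dim\frg$ adds a further layer of care: one must verify that unitarity and strong regularity genuinely descend to the Levi $\frl$, and control the exactness and irreducibility statements of Theorem~\ref{thm-Vogan-coho-ind} in the regime where only weak goodness is available. (After \cite{HP} the first step can be recast as: strong regularity plus unitarity makes \eqref{Dirac-inequality} an equality, so $H_D(X)\ne 0$ with a $\widetilde{K(\bbR)}$-type whose highest weight $\gamma$ satisfies $\gamma+\rho_c\equiv\Lambda$ under $W(\frg,\frh_f)$; but this reformulation does not shorten the identification step.)
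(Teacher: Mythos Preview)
The paper does not prove this theorem; it is stated as a citation of Salamanca-Riba's result \cite{Sa} and used as a black box in the search for FS-scattered representations (to dispose of strongly regular infinitesimal characters). There is therefore no proof in the paper to compare your proposal against.

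That said, what you have written is a reasonable high-level outline of the strategy in \cite{Sa}: the ``if'' direction via Theorem~\ref{thm-Vogan-coho-ind}, and the ``only if'' direction via the Dirac inequality forcing $\lambda_a(\mu)=\Lambda$, followed by Vogan's association of a $\theta$-stable parabolic to the lowest $K$-type and an induction on $\dim\frg$. You correctly flag the identification step (matching $X$ with the candidate $A_{\frq}(\lambda)$) as the hard part; in \cite{Sa} this is handled through a careful bottom-layer and Langlands-parameter analysis rather than by Dirac cohomology alone, and your sketch glosses over the technical work needed to show that unitarity and strong regularity descend to the Levi and that the cohomological induction functor is fully faithful in the relevant range. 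As a proof proposal this is an honest roadmap, but several of the steps you describe as consequences (e.g.\ ``the correction terms in Parthasarathy's inequality can be absorbed precisely because of strong regularity'') would each require a page or more of argument to make rigorous.
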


\subsection{The software \texttt{atlas}}\label{sec-atlas}

Let us recall necessary notation from \cite{ALTV} regarding the Langlands parameters in the software \texttt{atlas} \cite{At}. Let $H_{\bbC}$ be a \emph{maximal torus} of $G_{\bbC}$ with Lie algebra $\frh$. That is, $H_{\bbC}$ is a maximal connected abelian subgroup of $G_{\bbC}$ consisting of diagonalizable matrices. Note that $H_{\bbC}$ is complex connected reductive algebraic. Its \emph{character lattice} is the group of algebraic homomorphisms
$$
X^*:={\rm Hom}_{\rm alg} (H_{\bbC}, \bbC^{\times}).
$$
Choose a Borel subgroup $B_{\bbC} \supset H_{\bbC}$.
In \texttt{atlas}, an irreducible $(\frg, K)$-module $\pi$ is parameterized by a \emph{final} parameter $p=(x, \lambda, \nu)$ via the Langlands classification \cite{ALTV}, where $\lambda \in X^*+\rho$, $\nu \in (X^*)^{-\theta}\otimes_{\bbZ}\bbC$, and $x$ is a KGB element. That is, $x$ is an $K_{\bbC}$-orbit of the Borel variety $G_{\bbC}/B_{\bbC}$. In such a case, the infinitesimal character of $\pi$ is
\begin{equation}\label{inf-char}
\frac{1}{2}(1+\theta)\lambda +\nu \in\frh^*.
\end{equation}
Note that the Cartan involution $\theta$ now becomes $\theta_x$---the involution of $x$, which is given by the command \texttt{involution(x)} in \texttt{atlas}.

Some \texttt{atlas} commands that we shall need in the current article will be illustrated in Section \ref{sec-atlas-basic-commands}. For how to do cohomological induction in \texttt{atlas}, the reader may refer to Paul's lecture \cite{P1} or Section 2.4 of \cite{D17}.

\section{The structure of $E_{6(-14)}$}\label{sec-structure}

From now on, we will fix $G$ as the group $\texttt{E6\_h}$ in \texttt{atlas}. This equal rank group is connected, with center $\bbZ/3\bbZ$, but \emph{not} simply connected.
Note that $(G, K)$ is a Hermitian symmetric pair.
The Lie algebra $\frg_0$ of $G$ is labelled as EIII in \cite[Appendix C]{Kn}. We present
the Vogan diagram for $\frg_0$ in Fig.~\ref{Fig-EIII-Vogan}, where $\alpha_1=\frac{1}{2}(1, -1,-1,-1,-1,-1,-1,1)$, $\alpha_2=e_1+e_2$ and $\alpha_i=e_{i-1}-e_{i-2}$ for $3\leq i\leq 6$.  Let $\zeta_1, \dots, \zeta_6\in\frt_f^*$ be the corresponding fundamental weights for $\Delta^+(\frg, \frt_f)$, where $\frt_f\subset \frk$ is the fundamental Cartan subalgebra of $\frg$. The dual space $\frt_f^*$ will be identified with $\frt_f$ under the form $B(\cdot, \cdot)$. Put
\begin{equation}
\zeta:=\zeta_1=(0,0,0,0,0, -\frac{2}{3}, -\frac{2}{3}, \frac{2}{3}).
\end{equation}
Note that
$$
\rho=(0, 1, 2, 3, 4, -4, -4, 4).
$$
We will use $\{\zeta_1, \dots, \zeta_6\}$ as a basis to express the \texttt{atlas} parameters $\lambda$, $\nu$ and the infinitesimal character $\Lambda$. More precisely, in such cases, $[a, b, c, d, e, f]$ will stand for the vector $a\zeta_1+\cdots+f \zeta_6$. For instance, the trivial representation of $E_{6(-14)}$ has infinitesimal character $\rho=[1,1,1,1,1,1]$.

\begin{figure}[H]
\centering
\scalebox{0.6}{\includegraphics{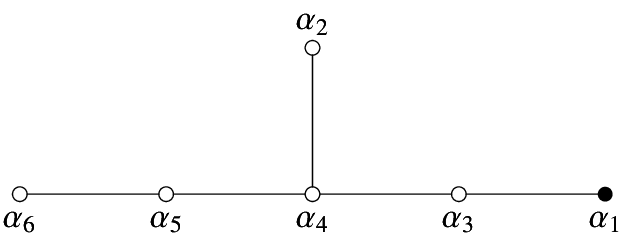}}
\caption{The Vogan diagram for $E_{6(-14)}$}
\label{Fig-EIII-Vogan}
\end{figure}

Denote by $\gamma_i=\alpha_{i+1}$ for $1\leq i\leq 5$.
 Let $\frt_f^-$ be the real linear span of $\gamma_1, \dots, \gamma_5$, which are the simple roots for $\Delta^+(\frk, \frt_f^-)$---the positive system for $\frk$ obtained from $\Delta^+(\frg, \frt_f)$ by restriction. We present the Dynkin diagram of $\Delta^+(\frk, \frt_f^-)$ in Fig.~\ref{Fig-EIII-Dynkin}.
Let $\varpi_1, \dots, \varpi_5\in (\frt_f^-)^*$ be the corresponding fundamental weights.
Note that $\bbR\zeta$ is the one-dimensional center of $\frk$,  that
$$
\frt_f=\frt_f^- \oplus \bbR\zeta,
$$
and that
$$
\rho_c=(0, 1, 2, 3, 4, 0, 0, 0).
$$
Moreover, we have the decomposition
\begin{equation}
\frp=\frp^+ \oplus \frp^-
\end{equation}
as $\frk$-modules, where $\frp^+$ has highest weight
$$
\beta:=\alpha_1+ 2\alpha_2+ 2\alpha_3+ 3\alpha_4+ 2\alpha_5 +\alpha_6=
(\frac{1}{2},\frac{1}{2},\frac{1}{2},\frac{1}{2},
\frac{1}{2},-\frac{1}{2},-\frac{1}{2},\frac{1}{2})
$$
and $\frp^-$ has highest weight $-\alpha_1$. Both of them have dimension $16$. Thus
$$
-\dim \frk + \dim\frp=-46+32=-14.
$$
This is how the number $-14$ enters the name $E_{6(-14)}$, see \cite{He}.

\begin{figure}[H]
\centering
\scalebox{0.6}{\includegraphics{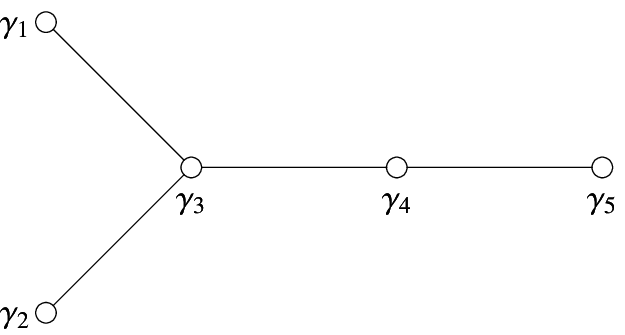}}
\caption{The Dynkin diagram for $\Delta^+(\frk, \frt_f^-)$}
\label{Fig-EIII-Dynkin}
\end{figure}

Let $E_{\mu}$ be the $\frk$-type with highest weight $\mu$.
We will use $\{\varpi_1, \dots, \varpi_5, \frac{1}{4}\zeta\}$ as a basis to express $\mu$.
Namely, in such a case, $[a, b, c, d, e, f]$ stands for the vector $a \varpi_1+b \varpi_2 + c \varpi_3+d \varpi_4 +e \varpi_5+ \frac{f}{4}\zeta$. For instance, $\beta=[1,0,0,0,0,3]$. The $\frk$-type $E_{[a, b, c, d, e, f]}$ has lowest weight $[-b, -a, -c, -d, -e, f]$. We will say that $E_{[a, b, c, d, e, f]}$ and $E_{[b, a, c, d, e, -f]}$ are \emph{dual $\frk$-types}.

For $a, b,c ,d ,e\in\bbZ_{\geq 0}$ and  $f\in\bbZ$, we have that $E_{[a, b, c, d, e, f]}$ is  a $K$-type if and only if
\begin{equation}\label{EIII-K-type}
-\frac{3}{4}a  -\frac{5}{4}b -\frac{3}{2}c -d -\frac{1}{2}e +\frac{1}{4}f \in\bbZ.
\end{equation}
Abuse the notation a bit, we may refer to a $\frk$-type (or $K$-type, $\widetilde{K}$-type) simply by its highest weight in terms of $\{\varpi_1, \dots, \varpi_5, \frac{1}{4}\zeta\}$.

\subsection{Spin norm and spin-lowest $K$-type}
Note that $W(\frg, \frt_f)=51840$ and that $W(\frk, \frt_f)=1920$. Choose from each coset of $W(\frg, \frt_f)/W(\frk, \frt_f)$ the element with minimum length, and collect them as $W(\frg, \frt_f)^1$. Then
$$
|W(\frg, \frt_f)^1|=\frac{|W(\frg, \frt_f)|}{|W(\frk, \frt_f)|}=27.
$$
We enumerate its elements as $w^{(0)}=e, \dots, w^{(26)}$, and put $\rho_n^{(j)}=w^{(j)}\rho-\rho_c$ for $0\leq j \leq 26$. The \emph{spin norm} of the $\frk$-type $E_{\mu}$ introduced  in \cite{D} now specializes as
\begin{equation}\label{spin-norm}
\|\mu\|_{\rm spin}:=\min_{0\leq j \leq 26}\|\{\mu-\rho_n^{(j)}\}+\rho_c\|.
\end{equation}
Here $\{\mu-\rho_n^{(j)}\}$ denotes the unique vector in the dominant chamber for $\Delta^+(\frk, \frt_f)$ to which $\mu-\rho_n^{(j)}$ is conjugate under the action of $W(\frk, \frt_f)$. Note that $\{\mu-\rho_n^{(j)}\}$ is the PRV component \cite{PRV} of the tensor product of $E_{\mu}$ and the dual $\frk$-type of $E_{\rho_n^{(j)}}$. Given a $(\frg, K)$-module $\pi$ of $E_{6(-14)}$, we define its spin norm as
$$
\|\pi\|_{\rm spin}:=\min\|\mu\|_{\rm spin},
$$
where $\mu$ runs over all the $K$-types of $\pi$. We call $\mu$ a \emph{spin-lowest $K$-type} of $\pi$ if it occurs in $\pi$ and that $\|\mu\|_{\rm spin}=\|\pi\|_{\rm spin}$.

When $\pi$ is unitary and has infinitesimal character $\Lambda$, the Dirac operator inequality \eqref{Dirac-inequality} guarantees that
\begin{equation}\label{Dirac-inequality-further}
\|\pi\|_{\rm spin}\geq \|\Lambda\|.
\end{equation}
By Theorem 3.5.2 of \cite{HP2}, the equality happens if and only if
$H_D(\pi)$ is non-vanishing;  moreover, in such a case, it is exactly the spin-lowest $K$-types of $\pi$ that contribute to $H_D(\pi)$.

\subsection{Lambda norm and spin norm of the trivial $K$-type}
This subsection aims to explicitly compute the lambda norm and the spin norm of the trivial $K$-type.
\begin{example}\label{exam-atlas-height-trivial-K-type}
Let $\mu$ be the highest weight of the trivial $K$-type of $E_{6(-14)}$. Then
$$
\mu+2\rho_c=2\rho_c=-10\zeta_1+ 2 \zeta_2 +2 \zeta_3 + 2 \zeta_4+2 \zeta_5+2 \zeta_6
$$
is dominant with respect to the positive root system
$$
(\Delta^+)'(\frg, \frt_f):=s_1s_3s_4s_2s_5s_4s_6 \left(\Delta^+(\frg, \frt_f)\right),
$$
where $s_i$ stands for the simple reflection $s_{\alpha_i}$. Let $\rho'$ be the half sum of the positive roots in $(\Delta^+)'(\frg, \frt_f)$, and let $\zeta_1', \dots, \zeta_6'$ be the corresponding fundamental weights.
Then one computes that
$$
\lambda':=\mu+2\rho_c-\rho'=(-\frac{1}{2},\frac{1}{2},\frac{1}{2},
\frac{3}{2},\frac{3}{2},\frac{1}{2},\frac{1}{2},-\frac{1}{2}).
$$
Moreover, its distance to an arbitrary point $a{\zeta_1'}+\cdots+f{\zeta_6'}$ equals
\begin{align*}
\frac{4}{3}a^2&+2 b^2+\frac{10}{3}c^2+6 d^2+\frac{10 }{3}e^2+\frac{4}{3}f^2+2 a b+\frac{10}{3}a c+4 a d +\frac{8}{3} a e +\frac{4}{3}a f+4 b c+6 b d+4 b e\\
&+2 b f  +8 c d +\frac{16}{3}c e+\frac{8}{3}c f +8 d e  +4 d f + \frac{10}{3}e f-4 a-6 b-6 c-10 d-6 e-4 f+6.
\end{align*}
Under the constraints that $a, \dots, f$ are all non-negative real numbers, one computes that the minimum distance is attained at the point
$$
a=f=\frac{1}{2}, \quad b=1, \quad c=d=e=0.
$$
Then it follows that
$$
\lambda_a(\mu)=(0, \frac{1}{2}, \frac{1}{2}, \frac{3}{2}, \frac{3}{2}, 0, 0, 0)
$$
and that $\mu$ has $\texttt{atlas}$ height $38$.
Moreover,
$$
\|\mu\|_{\rm lambda}=\sqrt{5}, \quad \|\mu\|_{\rm spin}=\|\rho\|=\sqrt{78}.
$$
\end{example}

\subsection{Integral infinitesimal characters}

\begin{prop}\label{prop-EIII-integer-inf-char}
Let $\Lambda$ be the infinitesimal character of any Dirac series of  $E_{6(-14)}$. Then $\Lambda$ must have integer coordinates with respect to the basis $\zeta_1, \dots, \zeta_6$ of $\frt_f^*$.
\end{prop}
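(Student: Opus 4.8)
The plan is to reduce the problem to a finite, \texttt{atlas}-checkable verification by combining Theorem~\ref{thm-HP} with the structure of the fundamental Cartan of $E_{6(-14)}$. Recall that for $\pi\in\widehat{E_{6(-14)}}^d$ with infinitesimal character $\Lambda$, Theorem~\ref{thm-HP} forces $\Lambda$ to be $W(\frg,\frh_f)$-conjugate to $\gamma+\rho_c$ for some highest weight $\gamma\in\frt_f^*$ of a $\widetilde{K(\bbR)}$-type occurring in $H_D(\pi)$. Since infinitesimal characters are only defined up to the Weyl group, it suffices to show that $\gamma+\rho_c$ has integer coordinates in the basis $\zeta_1,\dots,\zeta_6$ for \emph{every} such $\gamma$ that can possibly arise, i.e.\ for every $\gamma$ of the form $\mu - \rho_n^{(j)}$ appropriately translated into the dominant chamber. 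Concretely, $\widetilde{K(\bbR)}$-types in $H_D(\pi)$ are, by the spin-norm discussion, built from $K(\bbR)$-types $\mu$ of $\pi$ together with weights of the spin module $S_G$, so $\gamma$ lies in $\mu + (\text{weights of }S_G)$; and the weights of $S_G$ are $\rho_n - (\text{sums of subsets of }\Delta^+(\frp,\frt_f))$, all of which are of the form $w\rho_n$-type shifts. Hence $\gamma + \rho_c \in \mu + \rho_n^{(j)}$ for a suitable index $j$ (this is exactly the combinatorics packaged in \eqref{spin-norm}).

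The key step is then a congruence/lattice computation. First I would pin down the lattice $\Xi$ generated by $\zeta_1,\dots,\zeta_6$ inside $\frt_f^*$, and observe that $\rho_c = (0,1,2,3,4,0,0,0)$ and $\rho = (0,1,2,3,4,-4,-4,4)$ both lie in $\Xi$ (one checks $\rho=\sum\zeta_i$ directly from the stated normalization, and $\rho_n=\rho-\rho_c$). Consequently every $\rho_n^{(j)} = w^{(j)}\rho - \rho_c$ also lies in $\Xi$, since $W(\frg,\frt_f)$ preserves the root lattice $Q(\frg,\frt_f)$, and $\rho - w^{(j)}\rho \in Q(\frg,\frt_f)\subseteq \Xi$. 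Next I would use the $K(\bbR)$-type characterization \eqref{EIII-K-type}: a highest weight $\mu=[a,b,c,d,e,f]$ in the basis $\{\varpi_1,\dots,\varpi_5,\tfrac14\zeta\}$ is an actual $K(\bbR)$-type only when $-\tfrac34 a -\tfrac54 b -\tfrac32 c - d -\tfrac12 e + \tfrac14 f \in\bbZ$; I would translate this congruence into the statement that $\mu$ lies in $\Xi$ (this amounts to checking that the $\varpi_i$ and $\tfrac14\zeta$ expand into $\zeta_1,\dots,\zeta_6$ with the coefficient relations that \eqref{EIII-K-type} encodes, i.e.\ the central character constraint coming from $E_{6(-14)}$ being not simply connected). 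Putting these together, $\gamma + \rho_c \in \mu + \rho_n^{(j)} \subseteq \Xi$, which is the claim.

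I expect the main obstacle to be the bookkeeping in the change of basis between $\{\varpi_1,\dots,\varpi_5,\tfrac14\zeta\}$ (used for $K(\bbR)$-types) and $\{\zeta_1,\dots,\zeta_6\}$ (used for infinitesimal characters), and verifying that \eqref{EIII-K-type} is precisely the condition ``$\mu\in\Xi$'' rather than ``$\mu$ lies in some larger weight lattice.'' This is where the subtlety of the $\bbZ/3\bbZ$ center enters: a priori $\mu$ could lie in $P(\frk,\frt_f^-)\oplus\tfrac14\bbZ\zeta$, a lattice strictly larger than $\Xi$, and only the admissibility constraint \eqref{EIII-K-type} cuts it down. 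Once that identification is made, the rest is the routine observations that $\rho,\rho_c\in\Xi$ and that the Weyl group preserves $Q\subseteq\Xi$. An alternative, more computational route—likely what the authors take—is simply to enumerate the $27$ weights $\rho_n^{(j)}$ explicitly in \texttt{atlas}, verify each lies in $\Xi$, and then invoke \eqref{EIII-K-type} directly; I would fall back on that if the clean lattice argument runs into normalization headaches.
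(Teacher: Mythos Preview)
Your proposal is correct and follows essentially the same route as the paper: the authors also show that $\{\mu-\rho_n^{(j)}\}+\rho_c$ has integer $\zeta$-coordinates by checking that $\mu$, each $\rho_n^{(j)}$, each $\gamma_i$ (the simple roots of $\frk$, needed to handle the $W(\frk,\frt_f)$-conjugation implicit in $\{\,\cdot\,\}$, which you gloss over), and $\rho_c$ all satisfy the congruence~\eqref{EIII-K-type}, and then they write down exactly the change-of-basis formula you anticipated. Your one notational slip is ``$\gamma+\rho_c\in\mu+\rho_n^{(j)}$'', which should read $\gamma+\rho_c=\{\mu-\rho_n^{(j)}\}+\rho_c$; since $W(\frk,\frt_f)\subset W(\frg,\frt_f)$ preserves $\Xi$, your lattice argument is unaffected.
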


\begin{proof}
Take any Dirac series $\pi$ of $E_{6(-14)}$. Let $E_{\mu}$ be any $K$-type of $\pi$ contributing to $H_D(\pi)$. By Theorem \ref{thm-HP}, $\Lambda$ is conjugate to $\{\mu-\rho_n^{(j)}\}+\rho_c$ under the action of $W(\frg, \frt_f)$ for certain $0\leq j\leq 26$. Note that
$$
\{\mu-\rho_n^{(j)}\}=(\mu-\rho_n^{(j)})+\sum_{i=1}^{5}n_i\gamma_i, \quad \mbox{ where } n_i\in\bbZ_{\geq 0}. $$
It is direct to check that the coordinates of any $\rho_n^{(j)}$, or any $\gamma_i$, or $\rho_c$ with respect to $\{\varpi_1, \dots, \varpi_5, \frac{1}{4}\zeta\}$ satisfy \eqref{EIII-K-type}. Since $E_{\mu}$ is assumed to be a $K$-type, it follows that the coordinates $[a, b, c, d, e, f]$ of $\{\mu-\rho_n^{(j)}\}+\rho_c$ meet the requirement \eqref{EIII-K-type}. Now the desired conclusion follows from the fact that
$$
a\varpi_1+\cdots+ e\varpi_5+\frac{f}{4}\zeta=(-\frac{3 a}{4}-\frac{5 b}{4}-\frac{3 c}{2}-d-\frac{e}{2}+\frac{f}{4})\zeta_1+ a\zeta_2 + b\zeta_3+c\zeta_4+d\zeta_5+e\zeta_6.
$$
\end{proof}

Due to the above proposition, for the rest of this article, we always use $\Lambda\in\frt_f^*$ to denote a dominant integral infinitesimal character. That is,
\begin{equation}\label{Lambda}
\Lambda=[a, \dots, f]:=a\zeta_1+\cdots+f\zeta_6 \quad \mbox{ with } a, \dots, f\in \bbZ_{\geq 0}.
\end{equation}

\subsection{KGB elements of $E_{6(-14)}$}\label{sec-KGB}
The group $\texttt{E6\_h}$ has $513$ KGB elements in total, among which $170$ are fully supported. Their indices are listed below.
\begin{align*}
&266, 268, 271, 275, 276, [294, 299], 303, 306, 313, 314, 317, 321, [323, 326], 328, [342, 347],  \\
&349, 351, 354, 360, 362, [365, 369], 372, 373, [375, 392], [394, 396], 398, 400, 402, 403,  \\
&[405, 413], [415, 431], 433, 434, [436, 512].
\end{align*}

For any KGB element $x$, recall that $\theta_x$ is the involution of $x$ given by the command $\texttt{involution(x)}$. Take an arbitrary $\Lambda$ from \eqref{Lambda}.  It is direct to check that whenever $x$ is fully supported,  $\|\Lambda-\theta_x \Lambda\|^2$ is a homogeneous quadratic polynomial in terms of $a, \dots, f$ such that the coefficients of $a^2, \dots, f^2$ are positive, and that the coefficients of $ab, ac, \dots, ef$ are nonnegative. As a direct consequence, we have the following.

\begin{prop}\label{prop-fs-KGB}
Let $x$ be any fully supported KGB element of $\texttt{E6\_h}$. Then there are finitely many $\Lambda$ in \eqref{Lambda} satisfying $\|\Lambda-\theta_x \Lambda\|\leq \|2\rho\|$.
\end{prop}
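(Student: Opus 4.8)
The plan is to exploit the quadratic-form description of $\|\Lambda-\theta_x\Lambda\|^2$ that was just established in the paragraph preceding the statement. Write $q_x(a,\dots,f) := \|\Lambda-\theta_x\Lambda\|^2$; by that paragraph, when $x$ is fully supported, $q_x$ is a homogeneous quadratic polynomial in the nonnegative integer coordinates $a,\dots,f$ of $\Lambda$, whose pure-square coefficients are strictly positive and whose mixed-term coefficients are nonnegative. The point is that such a quadratic form is coercive on the nonnegative orthant: on the region $a,\dots,f\geq 0$, each monomial of $q_x$ is nonnegative, so $q_x(a,\dots,f)\geq c_i\, x_i^2$ for each coordinate $x_i$, where $c_i>0$ is the coefficient of $x_i^2$. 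Hence $q_x(a,\dots,f)\leq \|2\rho\|^2$ forces $x_i^2\leq \|2\rho\|^2/c_i$ for each $i$, i.e. each coordinate is bounded by an explicit constant. Since the coordinates are integers, only finitely many tuples $(a,\dots,f)$ can satisfy the inequality, which is exactly the claim.

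Concretely, the steps I would carry out are: (1) recall from the preceding paragraph that for fixed fully supported $x$, $q_x$ has the stated sign pattern on its coefficients; (2) observe that, because all the variables are constrained to be nonnegative and all cross-term coefficients are nonnegative, every term of $q_x(a,\dots,f)$ is nonnegative, so dropping all terms except the $i$-th square yields $q_x(a,\dots,f)\geq c_i x_i^2$ with $c_i>0$; (3) combine with the hypothesis $q_x(a,\dots,f)\leq\|2\rho\|^2$ to get $0\leq x_i\leq \|2\rho\|/\sqrt{c_i}$ for each of the six coordinates; (4) conclude that the set of admissible $\Lambda$ in \eqref{Lambda} is contained in a finite box $\prod_i\{0,1,\dots,\lfloor\|2\rho\|/\sqrt{c_i}\rfloor\}$, hence is finite. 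One can even make the box fully explicit once the $c_i$ (the diagonal coefficients of $q_x$) are read off for each of the $170$ fully supported KGB elements, though for the statement as phrased no explicit bound is needed.

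The only genuine content beyond bookkeeping is the claim that $q_x$ really does have the advertised sign pattern; but that is asserted in the excerpt immediately before the proposition ("It is direct to check that whenever $x$ is fully supported, $\|\Lambda-\theta_x\Lambda\|^2$ is a homogeneous quadratic polynomial$\dots$ the coefficients of $a^2,\dots,f^2$ are positive, and$\dots$ the coefficients of $ab,\dots,ef$ are nonnegative"), so I would simply cite that. Thus in the proof there is no real obstacle: the substance is the trivial observation that a quadratic form with positive diagonal and nonnegative off-diagonal entries is bounded below by each diagonal term on the nonnegative orthant, which immediately gives coordinate-wise bounds and hence finiteness. The main thing to be careful about is that the positivity of the diagonal coefficients is exactly what is needed and exactly what full support of $x$ guarantees — if $x$ were not fully supported, some diagonal coefficient could vanish and the argument would break, so the hypothesis is used in an essential way.
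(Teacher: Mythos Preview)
Your proposal is correct and is exactly the argument the paper has in mind: the paper simply states the proposition ``as a direct consequence'' of the sign pattern of the coefficients of $\|\Lambda-\theta_x\Lambda\|^2$, and your steps (2)--(4) spell out precisely why that consequence is immediate. There is nothing to add.
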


Indeed, the set
\begin{equation}\label{Omega}
\Omega:=\{\Lambda \mbox{ in } \eqref{Lambda}\mid
\Lambda \mbox{ is not strongly regular and }
\|\Lambda-\theta_x \Lambda\|\leq \|2\rho\| \mbox{ for some f.s. } x\}
\end{equation}
consists of $45696$ elements. Here ``f.s." stands for fully supported.

\subsection{u-small $K$-types}
The notion of u-small $\frk$-type was introduced in \cite{SV}. It has many equivalent characterizations. Let us recall the one stated in Theorem 6.7(d) of \cite{SV} for $E_{6(-14)}$ as follows. See also Lemma 4.4 of \cite{D17}.

\begin{lemma}\label{hyperplane-construction}
The $\frk$-type $E_{\mu}$ is u-small if and only if $\langle\mu+2\rho_c, w^{(j)}\zeta_i
\rangle\leq 2\langle\rho, \zeta_i
\rangle$, $1\leq i\leq 6$, $0\leq j\leq 26$.
\end{lemma}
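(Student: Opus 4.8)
The plan is to read off this statement as the $E_{6(-14)}$ specialization of the ``hyperplane construction'' of u-small $\frk$-types (Theorem 6.7(d) of \cite{SV}; compare Lemma 4.4 of \cite{D17}), the real content being the reduction of the defining inequalities from the whole Weyl group down to the $27$ minimal-length coset representatives $w^{(0)},\dots,w^{(26)}$. Recall that the cited theorem characterizes u-smallness of $E_\mu$ by requiring $\mu+2\rho_c$ to lie in the convex hull of the orbit $W(\frg,\frt_f)\cdot 2\rho$ inside $i\frt_{f,0}^*$. Since $2\rho$ is regular and dominant for $\Delta^+(\frg,\frt_f)$, one has the standard polytope description: this convex hull is the set of all $v\in i\frt_{f,0}^*$ with $\langle v,w\zeta_i\rangle\le\langle 2\rho,\zeta_i\rangle=2\langle\rho,\zeta_i\rangle$ for every $w\in W(\frg,\frt_f)$ and every $1\le i\le 6$; indeed the support function of the polytope in a direction $\eta$ is $\langle 2\rho,\eta^{+}\rangle$, where $\eta^{+}$ is the dominant $W(\frg,\frt_f)$-conjugate of $\eta$, and it suffices to test $\eta$ on the $W(\frg,\frt_f)$-translates of the fundamental weights, using $(\zeta_i)^{+}=\zeta_i$. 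Thus $E_\mu$ is u-small if and only if the full family of inequalities $\langle\mu+2\rho_c,w\zeta_i\rangle\le 2\langle\rho,\zeta_i\rangle$, $w\in W(\frg,\frt_f)$, $1\le i\le 6$, holds.

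It then remains to see that, within each coset in $W(\frg,\frt_f)$ modulo $W(\frk,\frt_f)$, the chosen representative $w^{(j)}$ yields the sharpest inequality, so that only $0\le j\le 26$ need be tested. This is where one uses that $\mu+2\rho_c$ is dominant for $\Delta^+(\frk,\frt_f)$, being a sum of the $\frk$-dominant weights $\mu$ and $2\rho_c$. Writing a general $w$ in the appropriate coset as $w=u\,w^{(j)}$ with $u\in W(\frk,\frt_f)$, one gets $\langle\mu+2\rho_c,w\zeta_i\rangle=\langle u^{-1}(\mu+2\rho_c),\,w^{(j)}\zeta_i\rangle$, and $\mu+2\rho_c-u^{-1}(\mu+2\rho_c)$ is a nonnegative combination of positive roots $\gamma\in\Delta^+(\frk,\frt_f)$. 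On the other hand, minimality of $w^{(j)}$ forces $(w^{(j)})^{-1}$ to carry $\Delta^+(\frk,\frt_f)$ into $\Delta^+(\frg,\frt_f)$, so $\langle\gamma,w^{(j)}\zeta_i\rangle=\langle(w^{(j)})^{-1}\gamma,\zeta_i\rangle\ge 0$ since $\zeta_i$ is $\Delta^+(\frg,\frt_f)$-dominant. Hence $\langle\mu+2\rho_c,w\zeta_i\rangle\le\langle\mu+2\rho_c,w^{(j)}\zeta_i\rangle$, the $27$-element family is equivalent to the full one, and the lemma follows (the reverse implication being trivial).

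The polytope fact and this monotonicity estimate are routine; the one point that needs care is bookkeeping with conventions, i.e.\ matching \cite{SV} to the explicit root data of Section \ref{sec-structure}, and in particular fixing which side of $W(\frg,\frt_f)/W(\frk,\frt_f)$ is used and which length condition the $w^{(j)}$ obey, so that the inclusion $(w^{(j)})^{-1}\Delta^+(\frk,\frt_f)\subseteq\Delta^+(\frg,\frt_f)$ holds --- this is exactly the property that powers the reduction from $|W(\frg,\frt_f)|\cdot 6$ inequalities to $27\cdot 6$, and it is the same property that makes the $\rho_n^{(j)}=w^{(j)}\rho-\rho_c$ genuine $\frk$-dominant weights in the spin-norm formula \eqref{spin-norm}. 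Beyond this I do not anticipate any genuine obstacle.
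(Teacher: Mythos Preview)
The paper does not actually prove this lemma: it is presented as a direct specialization of Theorem 6.7(d) of \cite{SV} (with a further pointer to Lemma 4.4 of \cite{D17}), and then used without further justification to enumerate the $3153$ u-small $K(\bbR)$-types. So there is no ``paper's own proof'' to compare against; you have supplied the argument that the paper simply outsources to the references.

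Your argument is correct in substance. The two ingredients --- (i) identifying u-smallness with membership of $\mu+2\rho_c$ in the convex hull of $W(\frg,\frt_f)\cdot 2\rho$ and describing that hull by the full family of hyperplanes $\langle\,\cdot\,,w\zeta_i\rangle\le 2\langle\rho,\zeta_i\rangle$, and (ii) reducing from all $w\in W(\frg,\frt_f)$ to the $27$ coset representatives using $\frk$-dominance of $\mu+2\rho_c$ --- are exactly what is needed, and your monotonicity estimate is the right mechanism. The one caution you flag is real and worth stating precisely: the paper writes $W(\frg,\frt_f)/W(\frk,\frt_f)$, but for your inequality $\langle\gamma,w^{(j)}\zeta_i\rangle\ge 0$ you need $(w^{(j)})^{-1}\Delta^+(\frk,\frt_f)\subseteq\Delta^+(\frg,\frt_f)$, which is the minimal-length condition for the cosets $W(\frk,\frt_f)\,w^{(j)}$. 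That this is indeed the convention in force is confirmed by the paper's own use of the same $w^{(j)}$ to define $\rho_n^{(j)}=w^{(j)}\rho-\rho_c$: these are $\Delta^+(\frk,\frt_f)$-dominant precisely when $(w^{(j)})^{-1}$ carries simple $\frk$-roots to positive $\frg$-roots, as you observe. With that convention fixed, your proof goes through.
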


Based on the above lemma, one enumerates that there are $3153$ u-small $K$-types in total. Let us collect all those $\Lambda$ in \eqref{Lambda} such that $\Lambda$ is conjugate to $\{\mu-\rho_n^{(j)}\}+\rho_c$ under the action of $W(\frg, \frt_f)$ for some $0\leq j\leq 26$ and some u-small $K$-type $E_{\mu}$, and that $\Lambda$ is not strongly regular as $\Omega_2$. It turns out that $\Omega_2$ consists of $1976$ elements and that $\Omega_2\subset \Omega$.

\subsection{A partition of the set $\Omega$} Let us introduce a partial ordering on the set $\Omega$ by setting that $\Lambda_1 \preceq \Lambda_2$ if each coordinate of $\Lambda_2-\Lambda_1$ is nonnegative. Then there are ten minimal elements of the finite poset $(\Omega_2, \preceq)$:
\begin{align*}
&[1, 0, 0, 1, 1, 1], [0, 0, 1, 1, 1, 1], [0, 1, 1, 0, 1, 1],
[1, 1, 0, 1, 0, 1], [1, 1, 1, 0, 1, 0], \\
&[1, 0, 1, 1, 0, 1], [1, 0, 1, 1, 1, 0], [0, 1, 1, 1, 0, 1],
[1, 1, 0, 1, 1, 0], [0, 1, 1, 1, 1, 0].
\end{align*}
Let us collect them as $V$. Put
\begin{equation}\label{Omega3}
\Omega_3:=\{\Lambda\in\Omega\setminus \Omega_2\mid \Lambda_0\preceq\Lambda  \mbox{ for certain } \Lambda_0\in V\},
\end{equation}
and set
\begin{equation}\label{Omega1}
\Omega_1:=\Omega\setminus (\Omega_2\cup \Omega_3).
\end{equation}
Note that $|\Omega_1|=17589$, $|\Omega_3|=26131$, and that we have the following partition of $\Omega$:
\begin{equation}\label{Omega-partition}
\Omega=\Omega_1\cup \Omega_2 \cup \Omega_3.
\end{equation}
\subsection{Some basic $\texttt{atlas}$ commands}\label{sec-atlas-basic-commands}

For any $\Lambda\in\frt_f^*$, collect all the irreducible representations of $E_{6(-14)}$ with infinitesimal character $\Lambda$ as $\Pi(\Lambda)$. By a foundational result of Harish-Chandra, $\Pi(\Lambda)$ must be a finite set. Collect the fully supported members of $\Pi(\Lambda)$ as $\Pi^{\rm fs}(\Lambda)$. Denote by $\Pi_{\rm u}(\Lambda)$ the unitary representations in $\Pi(\Lambda)$, and put $\Pi_{\rm u}^{\rm fs}(\Lambda):=\Pi^{\rm fs}(\Lambda) \cap \Pi_{\rm u}(\Lambda)$.

\begin{example}\label{exam-atlas-basic-EIII}
Let us pin down $\Pi_{\rm u}^{\rm fs}(\Lambda)$ for $\Lambda:=[1,1,1,0,0,0]$. To save space, certain outputs have been omitted.
\begin{verbatim}
G:E6_h
set all=all_parameters_gamma(G, [1,1,1,0,0,0])
#all
Value: 7
for p in all do if is_unitary(p) then prints(p) fi od
final parameter(x=97,lambda=[1,1,1,0,0,0]/1,nu=[1,0,1,-1,0,0]/1)
final parameter(x=52,lambda=[1,1,1,0,0,0]/1,nu=[0,2,0,-1,0,0]/2)
final parameter(x=45,lambda=[1,1,1,0,0,0]/1,nu=[-1,0,2,-1,0,0]/2)
final parameter(x=9,lambda=[1,1,1,0,0,0]/1,nu=[0,0,0,0,0,0]/1)
final parameter(x=0,lambda=[1,1,1,0,0,0]/1,nu=[0,0,0,0,0,0]/1)
\end{verbatim}
The first command sets up the group $\texttt{E6\_h}$ in the memory, while the second one builds up the set $\Pi(\Lambda)$. The third command counts the cardinality of $\Pi(\Lambda)$, which is seven according to its output. The last command tests the unitarity for each representation in $\Pi(\Lambda)$, and prints all the members of $\Pi_{\rm u}(\Lambda)$, which turns out to be five in total. Note that $\Pi_{\rm u}^{\rm fs}(\Lambda)$ is empty since none of the KGB elements $\#97, \#52, \#45, \#9, \#0$ is fully supported, cf. Section \ref{sec-KGB}.
\end{example}

\section{FS-scattered members of $\widehat{E_{6(-14)}}^d$}
This section aims to exhaust the FS-scattered members of $\widehat{E_{6(-14)}}^d$. To do this, assume that $(x, \lambda, \nu)$ is the final $\texttt{atlas}$ parameter of an irreducible unitary representation $\pi$ of $E_{6(-14)}$ where $x$ is fully supported. Let $\Lambda$ as in \eqref{Lambda} be its infinitesimal character (see Proposition \ref{prop-EIII-integer-inf-char}). Then by \cite[Proposition 3.1]{D17}
$$
\nu=\frac{\Lambda-\theta_x(\Lambda)}{2}
$$
must satisfy that $\|\nu\|\leq \|\rho\|$. Moreover, thanks to Theorem \ref{thm-SR}, it suffices to consider the case that $\Lambda$ is not strongly regular. Therefore, $\Lambda$ must belong to the set $\Omega$ defined in \eqref{Omega}. This allows us to enumerate the FS-scattered members of
$\widehat{E_{6(-14)}}^d$ after considering finitely many candidate representations.
To be more precise, it remains to single out the \emph{unitary} ones in
$\bigcup_{\Lambda\in\Omega} \Pi^{\rm fs}(\Lambda)$, and check whether they have Dirac cohomology or not one by one using the method described at step (e) of \cite[Section 8]{D17}.

To make the calculation efficient, we shall use Dirac inequality and the partition \eqref{Omega-partition}. Since $G$ is of Hermitian symmetric type, the $K$-types of $\pi$ may \emph{not} be a union of Vogan pencils (cf. Lemma 3.4 of \cite{Vog80}). See Example \ref{exam-hwt}. Thus unlike \cite{D19} or \cite{DDY}, we can \emph{no longer} use the distribution of the spin norm along Vogan pencils to further improve the computing efficiency.

\subsection{The set $\Omega_3$}\label{sec-Omega3}

Given an arbitrary $\Lambda\in\Omega_3$, we list one \textbf{lowest $K$-types} $\mu$ for every member in $\Pi^{\rm fs}(\Lambda)$. For instance, when $\Lambda=[0, 0, 1, 1, 1, 7]$, this is done in $\texttt{atlas}$ via the command
\begin{verbatim}
for p in all_parameters_gamma(G, [0, 0, 1, 1, 1, 7]) do if
#support(x(p))=6 then prints(highest_weight(LKTs(p)[0], KGB(G,0))) fi od
\end{verbatim}
with the following output
\begin{verbatim}
KGB element #0[  0,  0, -6,  6, -5,  5 ]
KGB element #0[  1,  1, -7,  7, -6,  6 ]
KGB element #0[  0,  1, -7,  7, -5,  6 ]
KGB element #0[  0,  0, -8,  9, -6,  6 ]
KGB element #0[  0,  0,  0,  0,  5, -3 ]
KGB element #0[  1,  0,  1, -1,  5, -3 ]
KGB element #0[  0,  0,  3, -2,  5, -3 ]
KGB element #0[  0,  0,  5, -5,  7, -3 ]
KGB element #0[  0,  1,  4, -4,  6, -3 ]
KGB element #0[  1,  1,  3, -3,  5, -2 ]
KGB element #0[  0,  1,  3, -2,  5, -3 ]
KGB element #0[  1,  0, -3,  3,  4, -3 ]
KGB element #0[  0,  0,  2, -2,  7, -3 ]
KGB element #0[  0,  0, -3,  4,  3, -2 ]
\end{verbatim}
Then we check one by one that
$$
\|\mu\|_{\rm spin}<\|\Lambda\|.
$$
Thus the Dirac inequality always fails, and $\Pi_{\rm u}^{\rm fs}(\Lambda)$ is empty. The same story happens for any other $\Lambda\in\Omega_3$. Thus $\bigcup_{\Lambda\in\Omega_3}\Pi_{\rm u}^{\rm fs}(\Lambda)$ is empty, and the set $\Omega_3$ contributes \emph{no} FS-scattered modules to $\widehat{E_{6(-14)}}^d$.

\subsection{The set $\Omega_1$} We adopt the same method as that for $\Omega_3$. This time the story is no longer that neat: the Dirac inequality fails for $144$ infinitesimal characters in $\Omega_1$. Thus we have to look at them more carefully. Let us illustrate the case $\Lambda=[0,0,1,0,1,1]$.
\begin{verbatim}
for p in all_parameters_gamma(G, [0, 0, 1, 0, 1, 1]) do if
#support(x(p))=6 then prints(p,"    ",is_unitary(p)) fi od
\end{verbatim}
The above command enumerates all the representations in $\Pi(\Lambda)$ whose KGB element $x$ is fully supported. Moreover, it tests the unitarity for them. The output is:
\begin{verbatim}
final parameter(x=429,lambda=[-1,-1,5,-2,3,1]/1,nu=[-1,-1,6,-4,4,0]/2) false
final parameter(x=306,lambda=[0,0,2,-1,1,3]/1,nu=[0,0,3,-3,0,6]/2)  true
\end{verbatim}
Note that the last representation is unitary. Thus $\Pi_{\rm u}^{\rm fs}(\Lambda)$ is \emph{not} empty. However, by Theorem \ref{thm-HP}, the Dirac cohomology of the unique module in $\Pi_{\rm u}^{\rm fs}(\Lambda)$ vanishes since
$$
\|\Lambda\|=\sqrt{\frac{58}{3}}<\|\rho_c\|=\sqrt{30}.
$$

All the members of $\bigcup_{\Lambda\in\Omega_1}\Pi_{\rm u}^{\rm fs}(\Lambda)$ are presented in Table \ref{table-EIII-FS-integral}. These representations are arranged into twelve parts according to their infinitesimal characters.
Using the above argument, one sees that all of them have zero Dirac cohomology. Thus the set $\Omega_1$ contributes \emph{no} FS-scattered modules to $\widehat{E_{6(-14)}}^d$ as well.

\begin{table}
\centering
\caption{Fully supported members of $\widehat{E_{6(-14)}}\setminus \widehat{E_{6(-14)}}^d$ with   integral \protect \\ infinitesimal characters}
\begin{tabular}{rccc}
\Xhline{0.8pt}
$\# x$ &   $\lambda$   & $\nu$ &$\Lambda$  \\
\Xhline{0.65pt}
$306$ & $[0,0,2,-1,1,3]$ & $[0,0,\frac{3}{2},-\frac{3}{2},0,3]$ & $[0, 0, 1, 0, 1, 1]$ \\
\Xcline{1-1}{0.65pt}
$392$ & $[0,1,-2,5,-2,0]$ & $[0,0,-\frac{3}{2},3,-\frac{3}{2},0]$ &$[0, 1, 0, 1, 0, 0]$ \\
\Xcline{1-1}{0.65pt}
$434$ & $[-1,2,4,-1,-1,2]$ & $[-1,1,3,-1,-1,1]$ & $[0, 1, 1, 0, 0, 1]$ \\
$400$ & $[0,3,1,0,-2,5]$ & $[0,\frac{3}{2},0,0,-\frac{3}{2},3]$ & ------ \\
$313$ & $[0,3,2,-1,-1,2]$ &
$[-\frac{1}{2},\frac{5}{2},\frac{3}{2},-\frac{3}{2},-1,\frac{3}{2}]$ & ------ \\
\Xcline{1-1}{0.65pt}
$481$ & $[-1,1,3,-1,3,-1]$ & $[-\frac{3}{2},0,\frac{5}{2},-1,\frac{5}{2},-\frac{3}{2}]$ & $[0, 1, 1, 0, 1, 0]$\\
\Xcline{1-1}{0.65pt}
$385$ & $[3,0,-2,0,3,1]$ & $[2,0,-2,0,2,0]$ & $[1, 0, 0, 0, 1, 1]$ \\
\Xcline{1-1}{0.65pt}
$456$ & $[1,-2,-1,5,-1,1]$ & $[0,-2,-\frac{1}{2},3,-\frac{1}{2},0]$& $[1, 0, 0, 1, 0, 1]$\\
$410$ & $[5,-2,-2,3,0,1]$ & $[3,-1,-2,2,-1,1]$  & ------\\
$408$ & $[1,-2,0,3,-2,5]$ & $[1,-1,-1,2,-2,3]$ & ------\\
$367$ & $[2,-1,-1,4,-3,3]$ & $[\frac{3}{2},-\frac{1}{2},-\frac{3}{2},\frac{5}{2},-\frac{5}{2},2]$ & ------\\
$366$ & $[3,-1,-3,4,-1,2]$ & $[2,-\frac{1}{2},-\frac{5}{2},\frac{5}{2},-\frac{3}{2},\frac{3}{2}]$ & ------\\
$343$ & $[3,0,-2,3,-2,3]$ & $[2,0,-2,2,-2,2]$ & ------\\
$342$ & $[3,0,-2,3,-2,3]$ & $[2,0,-2,2,-2,2]$ & ------\\
\Xcline{1-1}{0.65pt}
$380$ & $[1,0,3,0,-2,3]$ & $[0,0,2,0,-2,2]$ & $[1, 0, 1, 0, 0, 1]$\\
\Xcline{1-1}{0.65pt}
$317$ & $[3,0,1,-1,2,0]$ & $[3,0,0,-\frac{3}{2},\frac{3}{2},0]$ & $[1, 0, 1, 0, 1, 0]$ \\
\Xcline{1-1}{0.65pt}
$326$ & $[3,4,-1,-1,-1,3]$ & $[\frac{3}{2},2,-\frac{1}{2},-1,-\frac{1}{2},\frac{3}{2}]$  & $[1, 1, 0, 0, 0, 1]$\\
\Xcline{1-1}{0.65pt}
$442$ & $[2,2,-1,-1,4,-1]$ & $[1,1,-1,-1,3,-1]$   & $[1, 1, 0, 0, 1, 0]$\\
$417$ & $[5,3,-2,0,1,0]$ & $[3,\frac{3}{2},-\frac{3}{2},0,0,0]$   & ------\\
$321$ & $[2,3,-1,-1,2,0]$ & $[\frac{3}{2},\frac{5}{2},-1,-\frac{3}{2},\frac{3}{2},-\frac{1}{2}]$   & ------\\
\Xcline{1-1}{0.65pt}
$413$ & $[3,1,-2,0,3,1]$ & $[\frac{5}{2},1,-\frac{5}{2},-\frac{1}{2},\frac{5}{2},0]$   & $[1, 1, 0, 0, 1, 1]$\\
$409$ & $[4,1,-1,-1,3,1]$ & $[3,0,-2,-\frac{1}{2},\frac{5}{2},0]$    & ------\\
$376$ & $[2,3,-1,-1,2,1]$ & $[2, 3, -1, -2, 1, 1]$   & ------\\
$323$ & $[2,1,-1,0,1,3]$ & $[2,1,-1,-1,0,3]$   & ------\\
$297$ & $[2,2,0,-1,1,2]$ & $[2,2,0,-2,0,2]$   & ------\\
$295$ & $[2,2,0,-1,1,2]$ & $[2,2,0,-2,0,2]$   & ------\\
\Xcline{1-1}{0.65pt}
$403$ & $[1,3,3,-1,-2,3]$ & $[0,1,\frac{5}{2},-\frac{1}{2},-\frac{5}{2},\frac{5}{2}]$ & $[1, 1, 1, 0, 0, 1]$\\
$394$ & $[1,1,3,-1,-1,4]$ & $[0,0,\frac{5}{2},-\frac{1}{2},-2,3]$ & ------\\
$378$ & $[1,3,2,-1,-1,2]$ & $[1,3,1,-2,-1,2]$ & ------\\
$325$ & $[3,1,1,0,-1,2]$ & $[3,1,0,-1,-1,2]$ & ------\\
$296$ & $[2,2,1,-1,0,2]$ & $[2,2,0,-2,0,2]$ & ------\\
$294$ & $[2,2,1,-1,0,2]$ & $[2,2,0,-2,0,2]$ & ------\\
\Xhline{0.8pt}
\end{tabular}
\label{table-EIII-FS-integral}
\end{table}

\subsection{The set $\Omega_2$}
We adopt the same method as that of $\Omega_3$. This time, for $\Lambda\in\Omega_2$, unlike the case of $\Omega_3$, one could meet non-empty $\Pi_{\rm u}^{\rm fs}(\Lambda)$; moreover, unlike the case of $\Omega_1$, \textbf{all of them} have non-vanishing Dirac cohomology. Some concrete examples will be given in Section \ref{sec-exam}.

\begin{table}
\centering
\caption{FS-scattered Dirac series of $E_{6(-14)}$}
\begin{tabular}{rcccr}
\Xhline{0.8pt}
$\# x$ &   $\lambda$   & $\nu$ & spin LKTs  \\
\Xhline{0.65pt}
$463$ & $[1,2,1,1,-1,3]$ & $[0,\frac{3}{2},2,0,-\frac{7}{2},5]$&
${\rm LKT}=[2,0,0,0,0,-10]$, $[3,0,0,0,0,-7]$\\
& & &  $[2,1,0,0,0,-13]$\\
\cline{4-4}

$412$ & $[3,1,1,1,-1,2]$ & $[1,0,3,0,-\frac{7}{2},\frac{7}{2}]$&
${\rm LKT}=[0, 0, 0, 0, 2, 20]$, $[1, 0, 0, 0, 2, 23]$\\
\cline{4-4}

\Xcline{1-1}{0.65pt}
$502$ & $[2,3,2,-1,1,1]$ & $[1,4,1,-1,0,0]$&
${\rm LKT}=[0, 0, 0, 0, 0, 12]$, $[1, 0, 0, 0, 0, 15]$ \\
& & & $[2, 0, 0, 0, 0, 18]$, $[3, 0, 0, 0, 0, 21]$\\
\cline{4-4}

$496$ & $[1,3,1,0,1,1]$ & $[0,4,0,-1,1,1]$&  ${\rm LKT}=[0, 0, 0, 0, 0, -12]$, $[0, 1, 0, 0, 0, -15]$\\
& & &$[0, 2, 0, 0, 0, -18]$, $[0, 3, 0, 0, 0, -21]$\\
\cline{4-4}

$492$ & $[2,2,2,-1,1,1]$ & $[\frac{3}{2}, \frac{3}{2}, \frac{7}{2},- \frac{7}{2},2,0]$&
${\rm LKT}=[0, 1, 0, 0, 0, 9]$, $[1, 1, 0, 0, 0, 12]$\\
&&& $[0, 3, 0, 0, 0, 3]$, $[2, 1, 0, 0, 0, 15]$\\
\cline{4-4}

$486$ & $[1,2,1,-1,2,2]$ & $[0,\frac{3}{2},2,-\frac{7}{2},\frac{7}{2},\frac{3}{2}]$&
${\rm LKT}=[1, 0, 0, 0, 0, -9]$, $[1, 1, 0, 0, 0, -12]$\\
& & & $[3, 0, 0, 0, 0, -3]$, $[1, 2, 0, 0, 0, -15]$\\
\cline{4-4}

$440$ & $[1,1,2,-1,2,1]$ & $[1,0,3,-\frac{7}{2},\frac{7}{2},0]$&${\rm LKT}=[0, 0, 0, 0, 1, 18]$, $[1, 0, 0, 0, 1, 21]$\\
& & & $[2, 0, 0, 0, 1, 24]$\\
\cline{4-4}

$427$ & $[1,1,2,-1,2,1]$ & $[0,0,\frac{7}{2},-\frac{7}{2},3,1]$&  ${\rm LKT}=[0, 0, 0, 0, 1, -18]$, $[0, 1, 0, 0, 1, -21]$\\
& & &$[0, 2, 0, 0, 1, -24]$\\
\cline{4-4}

$418$ & $[1,3,2,-2,2,1]$ & $[1,4,\frac{3}{2},-4,\frac{3}{2},1]$& ${\rm LKT}=[0, 0, 0, 2, 0, 0]$, $[0, 1, 0, 2, 0, -3]$\\
& & & $[1, 0, 0, 2, 0, 3]$\\
\cline{4-4}

$299$ & $[2,2,1,-1,1,2]$ & $[\frac{5}{2},\frac{5}{2},0,-\frac{5}{2},0,\frac{5}{2}]$&${\rm LKT}=[0, 1, 0, 0, 0, -27]$, $[0, 0, 1, 0, 0, -30]$\\
\cline{4-4}

$298$ & $[2,2,1,-1,1,2]$ & $[\frac{5}{2},\frac{5}{2},0,-\frac{5}{2},0,\frac{5}{2}]$&${\rm LKT}=[1, 0, 0, 0, 0, 27]$, $[0, 0, 1, 0, 0, 30]$\\
\cline{4-4}
\Xcline{1-1}{0.65pt}

$469$ & $[3,2,-1,1,1,1]$ & $[5,\frac{3}{2},-\frac{7}{2},0,2,0]$& ${\rm LKT}=[0, 2, 0, 0, 0, 10]$, $[0, 3, 0, 0, 0, 7]$\\
& & & $[1, 2, 0, 0, 0, 13]$\\
\cline{4-4}

$405$ & $[2,1,-1,1,1,3]$ & $[\frac{7}{2},0,-\frac{7}{2},0,3,1]$&${\rm LKT}=[0, 0, 0, 0, 2, -20]$, $[0, 1, 0, 0, 2, -23]$\\
\cline{4-4}
\Xcline{1-1}{0.65pt}

$493$ & $[1,-1,-1,3,1,2]$ & $[0,-2,-\frac{3}{2},\frac{7}{2},0,\frac{3}{2}]$& $[0, 3, 0, 0, 0, -17]$, $[3,0,0,0,0,1]$\\
\cline{4-4}

$465$ & $[3,-2,-1,3,1,1]$ & $[1,-\frac{7}{2},-\frac{1}{2},\frac{7}{2},0,0]$& ${\rm LKT}=[0,0,0,0,0, 16]$\\
\cline{4-4}
\Xcline{1-1}{0.65pt}

$495$ & $[-1,1,3,-1,2,2]$ & $[-2,0,3,-1,1,2]$&$[3, 0, 0, 0, 0, 5]$, $[0, 3, 0, 0, 0, -13]$\\
\cline{4-4}

$423$ & $[0,1,3,-2,3,1]$ & $[0,0,3,-3,3,0]$& ${\rm LKT}=[0, 0, 0, 0, 0, 20]$\\
\cline{4-4}

$266$ & $[0,2,3,-2,1,2]$ & $[-\frac{3}{2},2,\frac{5}{2},-\frac{5}{2},0,2]$&
$[1, 1, 0, 0, 2, -4]$, $[0, 0, 1, 0, 2, -10]$\\
\cline{4-4}
\Xcline{1-1}{0.65pt}

$499$ & $[3,1,0,1,0,3]$ & $[\frac{5}{2},0,0,0,0,\frac{5}{2}]$&$[0, 3, 0, 0, 0, -9]$, $[3, 0, 0, 0, 0, 9]$\\
\cline{4-4}

$438$ & $[1,1,-1,3,-1,1]$ & $[1,0,-\frac{5}{2},4,-\frac{5}{2},1]$&$[0, 2, 0, 1, 0, -6]$, $[2, 0, 0, 1, 0, 6]$\\
\cline{4-4}

$373$ & $[2,1,-1,2,-1,2]$ & $[\frac{5}{2},1,-\frac{5}{2},2,-\frac{5}{2},\frac{5}{2}]$&$[0, 0, 2, 0, 0, 0]$\\
\cline{4-4}

$369$ & $[3,1,0,1,-1,2]$ & $[3,0,-2,2,-\frac{5}{2},\frac{5}{2}]$&$[0, 1, 0, 0, 0, 21]$\\
\cline{4-4}

$365$ & $[2,1,-1,1,0,3]$ & $[\frac{5}{2},0,-\frac{5}{2},2,-2,3]$&$[1, 0, 0, 0, 0, -21]$\\
\cline{4-4}

$347$ & $[2,1,-1,2,-1,2]$ & $[\frac{5}{2},0,-\frac{5}{2},\frac{5}{2},-\frac{5}{2},\frac{5}{2}]$&${\rm LKT}=[0, 0, 0, 0, 0, -24]$\\
\cline{4-4}

$346$ & $[2,1,-1,2,-1,2]$ & $[\frac{5}{2},0,-\frac{5}{2},\frac{5}{2},-\frac{5}{2},\frac{5}{2}]$&${\rm LKT}=[0, 0, 0, 0, 0, 24]$\\
\cline{4-4}
\Xcline{1-1}{0.65pt}

$501$ & $[3,1,1,0,3,-2]$ & $[2,0,1,-1,3,-2]$& ${\rm LKT}=[0, 3, 0, 0, 0, -5]$, $[3, 0, 0, 0, 0, 13]$\\
\cline{4-4}

$424$ & $[1,1,3,-2,3,0]$ & $[0,0,3,-3,3,0]$& ${\rm LKT}=[0, 0, 0, 0, 0, -20]$\\
\cline{4-4}

$276$ & $[2,2,1,-2,3,0]$ & $[2,2,0,-\frac{5}{2},\frac{5}{2},-\frac{3}{2}]$&$[1, 1, 0, 0, 2, 4]$, $[0, 0, 1, 0, 2, 10]$\\
\cline{4-4}
\Xcline{1-1}{0.65pt}

$500$ & $[2,-1,1,3,-1,1]$ & $[\frac{3}{2},-2,0,\frac{7}{2},-\frac{3}{2},0]$&$[0, 3, 0, 0, 0, -1]$, $[3, 0, 0, 0, 0, 17]$\\
\cline{4-4}

$454$ & $[1,-2,1,3,-1,3]$ & $[0,-\frac{7}{2},0,\frac{7}{2},-\frac{1}{2},1]$&${\rm LKT}=[0, 0, 0, 0, 0, -16]$\\
\cline{4-4}
\Xcline{1-1}{0.65pt}

$512$ & $[1,1,1,1,1,1]$ & $[\frac{5}{2},3,0,0,0,\frac{5}{2}]$& ${\rm LKT}=[0,0,0,0,0,0]$\\
\Xhline{0.8pt}
\end{tabular}
\label{table-EIII-scattered-part}
\end{table}

Summing up the calculations in the three subsections, we conclude that $\bigcup_{\Lambda\in\Omega_3}\Pi_{\rm u}^{\rm fs}(\Lambda)$ are precisely all the FS-scattered members of $\widehat{E_{6(-14)}}^d$. We collect them in Table \ref{table-EIII-scattered-part}.  These representations are arranged into nine parts according to their infinitesimal characters, which are presented below:
\begin{align*}
&[ 1, 1, 1, 1, 0, 1],\quad [1, 1, 1, 0, 1, 1], \quad [ 1, 1, 0, 1, 1, 1 ], \quad [1, 0, 0, 1, 1, 1], \quad [0, 1, 1, 0, 1, 1], \\
&[1, 1, 0, 1, 0, 1], \quad [1, 1, 1, 0, 1, 0], \quad [1, 0, 1, 1, 0, 1], \quad [1,1,1,1,1,1].
\end{align*}
Note that the last column of Table \ref{table-EIII-scattered-part} presents \emph{all} the  spin-lowest $K$-types, which always occur with multiplicity one, of the corresponding representation. Note further that the forth entry in Table \ref{table-EIII-scattered-part} is the  Wallach representation (see Example \ref{exam-Wallach} for more details), while the last entry is the trivial representation.

\section{Strings of $\widehat{E_{6(-14)}}^d$}

In this section, we will report all the strings of $\widehat{E_{6(-14)}}^d$ in Tables \ref{table-EIII-string-part-card0}--\ref{table-EIII-string-part-card5-part2}, with $a, b, c, d, e, f$ being nonnegative integers. Sometimes, there are further requirements on $a, b, c, d, e, f$ indicating where the string starts. They will be put in the column ``spin LKTs".

Let us explain the strategy. Recall that in the previous section, we have searched among certain infinitesimal characters to exhaust the FS-scattered representations of $\widehat{E_{6(-14)}}^d$. Any string of $\widehat{E_{6(-14)}}^d$ is cohomologically induced from a  FS-scattered representation of $\widehat{L}^d_{\rm ss}$ tensored with unitary characters of $L$. Here $L$ is the Levi factor of a \emph{proper} $\theta$-stable parabolic subgroup of $G$, and $L_{\rm ss}:=[L, L]$. Therefore, in the search of the previous section, if we enlarge our attention to the set $\Pi_{\rm u}(\Lambda)$ instead of $\Pi_{\rm u}^{\rm fs}(\Lambda)$, then we shall meet some starting members of each string. Thus there are two remaining tasks: to explicitly pin down the places where each string starts, and to show that each string has no hole.

We can only handle the first task case by case. To be more precise, in each table, the last column presents the spin-lowest $K$-types, which always show up exactly once in the corresponding representation, using the basis $\{\varpi_1, \dots, \varpi_5, \frac{1}{4}\zeta\}$. They occur under certain conditions, which are put immediately after each spin-lowest $K$-type.  To give an example, let us read the string with $\#x=170$ in Table \ref{table-EIII-string-part-card4-part2}. When $a=b=1$, the representation has two spin-lowest $K$-types:  $[1, 2, 0, 0, 2, -3]$ and $[0, 2, 0, 1, 1, 0]$. On the other hand, it has a unique spin-lowest $K$-type $[1, 1, 0, 0, 2, -4]$ when $a=0$ and $b=1$. These conditions tell us the  members for each string.

For the second task, we need some argument to show that every representation described in any string is indeed a Dirac series of $E_{6(-14)}$. Example \ref{exam-trans} considers the string with $\#x=220$, the other ones can be handled similarly.

\subsection{The strings with $|{\rm supp}(x)|=0$}

Let us present all the irreducible tempered representations with non-zero Dirac cohomology for $E_{6(-14)}$ in Table \ref{table-EIII-string-part-card0}. Here $0\leq \# x\leq 26$, $\theta_x$ is always the identity matrix, and $\nu$ is always $[0,0,0,0,0,0]$. Moreover,
$\lambda=\Lambda$ is as in \eqref{Lambda}, with further requirements put immediately after each spin-lowest $K$-type. For instance, we interpret ``$a, b, c+d, d+e, e+f\geq 1$" in the first row of Table \ref{table-EIII-string-part-card0} as a short way for
$$
a\geq 1, \quad b \geq 1, \quad c+d \geq 1, \quad d+e \geq 1, \quad e+f\geq 1.
$$

The following result is immediate from Theorem 1.2 of \cite{DD16}.

\begin{cor}
All the $K$-types of $E_{6(-14)}$ whose spin norm equal to their lambda norm are exactly the ones in the second column of Table \ref{table-EIII-string-part-card0}.
\end{cor}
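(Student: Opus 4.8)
The plan is to obtain the corollary as the \emph{equality case} of the general comparison $\|\mu\|_{\rm spin}\ge\|\mu\|_{\rm lambda}$, valid for every $\frk$-type $E_\mu$ (cf.\ \cite{D}), combined with an explicit description of the $\frk$-types realising equality. The inclusion ``$\supseteq$'' is then quick. For each $\mu$ in the second column of Table \ref{table-EIII-string-part-card0}, the column heading records that $\mu$ is at once the unique lowest $K(\bbR)$-type and the unique spin-lowest $K(\bbR)$-type of the corresponding representation $\pi$, which --- since $|{\rm supp}(x)|=0$ and $\nu=[0,0,0,0,0,0]$ --- is a (limit of) discrete series with real infinitesimal character $\Lambda=\lambda$ as in \eqref{Lambda}. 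Because $\pi$ is tempered with real infinitesimal character $\Lambda$, Vogan's lambda-norm theory \cite{Vog81} gives $\|\mu\|_{\rm lambda}=\|\pi\|_{\rm lambda}=\|\Lambda\|$; because $\pi\in\widehat{E_{6(-14)}}^d$, the Dirac inequality \eqref{Dirac-inequality-further} is an equality, so $\|\mu\|_{\rm spin}=\|\pi\|_{\rm spin}=\|\Lambda\|$. Hence $\|\mu\|_{\rm spin}=\|\mu\|_{\rm lambda}$ for every such $\mu$, and only the bookkeeping over the $27$ rows remains.

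For the inclusion ``$\subseteq$'', let $E_\mu$ be a $\frk$-type with $\|\mu\|_{\rm spin}=\|\mu\|_{\rm lambda}$. Since $\mu+2\rho_c$ is strictly dominant for $\Delta^+(\frk,\frt_f)$, one may choose the positive system $(\Delta^+)'(\frg,\frt_f)$ computing $\|\mu\|_{\rm lambda}$ so that it contains $\Delta^+(\frk,\frt_f)$; there are exactly $27$ such positive systems of $\Delta(\frg,\frt_f)$, namely $w^{(j)}\Delta^+(\frg,\frt_f)$ for $0\le j\le 26$, so $(\Delta^+)'(\frg,\frt_f)=w^{(j_1)}\Delta^+(\frg,\frt_f)$ and $\rho'=w^{(j_1)}\rho$ for a unique $j_1$. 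I would then compare, at the index $j_1$, the spin-norm candidate $\|\{\mu+\rho_c-w^{(j_1)}\rho\}+\rho_c\|$ with $\|\lambda_a(\mu)\|$, where $\lambda_a(\mu)$ is the projection of $\mu+2\rho_c-w^{(j_1)}\rho$ to the $w^{(j_1)}\Delta^+(\frg,\frt_f)$-dominant chamber; the hypothesis $\|\mu\|_{\rm spin}=\|\mu\|_{\rm lambda}$ should force both the chamber projection in $\lambda_a(\mu)$ and the $W(\frk,\frt_f)$-adjustment hidden in $\{\,\cdot\,\}$ to be trivial, with the minimum in $\|\mu\|_{\rm spin}$ attained at $j=j_1$. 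Consequently $\mu+2\rho_c-w^{(j_1)}\rho$ is dominant for $w^{(j_1)}\Delta^+(\frg,\frt_f)$, i.e. $\mu=w^{(j_1)}(\Lambda+\rho)-2\rho_c$ with $\Lambda:=(w^{(j_1)})^{-1}(\mu+2\rho_c)-\rho$ dominant for $\Delta^+(\frg,\frt_f)$, and $\|\mu\|_{\rm lambda}=\|\Lambda\|$. Since $E_\mu$ satisfies \eqref{EIII-K-type} while $w^{(j_1)}\rho-2\rho_c$ lies in the relevant lattice, $\Lambda$ has integer coordinates with respect to $\zeta_1,\dots,\zeta_6$ (alternatively, the limit of discrete series with parameter $\mu$ lies in $\widehat{E_{6(-14)}}^d$, so Proposition \ref{prop-EIII-integer-inf-char} applies). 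Thus $\mu$ is the lowest $K(\bbR)$-type of a nonzero limit of discrete series of $E_{6(-14)}$ with dominant integral infinitesimal character; letting $j$ run over $\{0,\dots,26\}$ and $\Lambda$ over the dominant integral weights for which $w^{(j)}(\Lambda+\rho)-2\rho_c$ is $\Delta^+(\frk,\frt_f)$-dominant and the limit of discrete series is nonzero reproduces precisely the $27$ parametrised families in the second column of Table \ref{table-EIII-string-part-card0}, together with the positivity constraints displayed there.

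The main obstacle is the synchronisation step inside ``$\subseteq$'': showing that equality of the two norms forces \emph{both} Weyl-group operations implicit in their definitions to act trivially and to be governed by the same element $w^{(j_1)}\in W(\frg,\frt_f)^1$. This is a self-contained combinatorial statement about $W(\frg,\frt_f)^1$, the vectors $\rho_n^{(j)}=w^{(j)}\rho-\rho_c$, $\rho_c$, and the geometry of the Weyl chambers, not using anything special about $E_{6(-14)}$, and it may well already be extractable from \cite{D}. Once it is in hand, what remains is only the explicit $27$-fold translation of ``$w^{(j)}(\Lambda+\rho)-2\rho_c$ is $\Delta^+(\frk,\frt_f)$-dominant'' into the rows and side conditions of Table \ref{table-EIII-string-part-card0}, which is exactly the computation already performed to produce that table.
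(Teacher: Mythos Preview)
The paper's own proof is a two-line citation: it observes that Table~\ref{table-EIII-string-part-card0} lists exactly the irreducible tempered representations of $E_{6(-14)}$ with nonzero Dirac cohomology, and then invokes Theorem~1.2 of \cite{DD16}, which characterises the $\frk$-types with $\|\mu\|_{\rm spin}=\|\mu\|_{\rm lambda}$ precisely as the lowest $K(\bbR)$-types of such tempered representations. Your proposal is, in effect, an attempt to reprove that theorem from scratch in the special case of $E_{6(-14)}$.

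Your ``$\supseteq$'' argument is fine and is essentially one direction of \cite{DD16}, Theorem~1.2, spelled out here. The gap is exactly where you locate it: in ``$\subseteq$'' the \emph{synchronisation step}---that equality of the two norms forces both the chamber projection in $\lambda_a(\mu)$ and the $W(\frk,\frt_f)$-reflection in $\{\mu-\rho_n^{(j)}\}$ to be trivial and to occur at the \emph{same} index $j_1$---is asserted but not proved. This is not a triviality: the two operations live in different Weyl groups acting on different chambers, and the inequality $\|\{\mu-\rho_n^{(j)}\}+\rho_c\|\ge\|\lambda_a(\mu)\|$ at a single $j$ does not by itself pin down where the minima are attained. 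The reference you point to, \cite{D}, contains the inequality $\|\mu\|_{\rm spin}\ge\|\mu\|_{\rm lambda}$ but not the equality characterisation; the latter is precisely the content of \cite{DD16}. So either cite \cite{DD16} directly (which collapses your argument to the paper's), or supply a genuine proof of the synchronisation claim---as written, the ``$\subseteq$'' direction is a plausible sketch rather than a proof.
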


\subsection{The strings with $|{\rm supp}(x)|=1,2,3,4,5$}\label{sec-string-1-5}
They will be presented in Tables \ref{table-EIII-string-part-card1}---\ref{table-EIII-string-part-card5-part2} in Section 7.

\begin{example}\label{exam-trans}
Let us consider the string with $\#x=220$, which has support $[0,2,3,4]$ in \texttt{atlas}. Fixing $b=0$ and $f=1$ gives us the unique starting module of the string
$$
\pi_{0,1}:=(x, [2,-2,-1,3,1,0],[\frac{3}{2},-2,-\frac{1}{2},1,1,-\frac{3}{2}]).
$$
One computes directly that it has infinitesimal character $\Lambda_{0,1}:=[1, \textbf{0}, 0, 1, 1, \textbf{1}]$, and that it is a Dirac series of $E_{6(-14)}$. The module $\pi_{0,1}$ is cohomologically induced from an irreducible unitary module $\pi_{0,1}^L$ (in the way of Theorem 2.5 of \cite{D17}) of $L$ which is weakly good. Indeed, $\pi_{0,1}^L$ is the representation $\texttt{pL}$ below:
\begin{verbatim}
set p=parameter(KGB(G,220),[2,-2,-1,3,1,0]/1,[3,-4,-1,2,2,-3]/2)
set (P, pL)=reduce_good_range(p)
goodness(pL,G)
Value: "Weakly good"
is_unitary(pL)
Value: true
P
Value: ([0,2,3,4],KGB element #220)
Levi(P)
Value: connected quasisplit real group with Lie algebra 'su(3,2).u(1).u(1)'
\end{verbatim}
Now take arbitrary integers $b\geq 0$, $f\geq 1$. Put $\Lambda_{b, f}:=[1, b, 0, 1, 1, f]$
and
\begin{equation}
\pi_{b,f}:=(x, [2,b-2,-1,3,1,f-1],[\frac{3}{2},-2,-\frac{1}{2},1,1,-\frac{3}{2}]).
\end{equation}
Let $\zeta_{b, f-1}$ be the unitary character of $L$ with differential $b \zeta_2+(f-1)\zeta_6$. Since $\Lambda_{b, f}-\Lambda_{0,1}$ is dominant for $\Delta(\fru, \frt_f)$, by Theorem 7.237 of \cite{KV}, we have that
\begin{equation}
\psi_{\Lambda_{b, f}}^{\Lambda_{0, 1}}(\caL_S(\pi_{0,1}^L\otimes \zeta_{b, f-1}))=\caL_S(\pi_{0,1}^L)=\pi_{0,1}.
\end{equation}
Here $\psi_{\Lambda_{b, f}}^{\Lambda_{0, 1}}$ is the translation functor, and $S:=\dim (\fru\cap\frk)$. In particular, it says that $\caL_S(\pi_{0,1}^L\otimes \zeta_{b, f-1})$ is non-zero. By Theorem \ref{thm-Vogan-coho-ind} (iii) and (iv), this module is irreducible and unitary since the inducing module $\pi_{0,1}^L\otimes \zeta_{b, f-1}$ is  weakly good. This gives us the representation $\pi_{b, f}$. Since $\pi_{0,1}$ has non-zero Dirac cohomology, and that
\begin{equation}
H_D(\pi_{0,1}^L\otimes \zeta_{b, f-1})=H_D(\pi_{0,1}^L)\otimes \zeta_{b, f-1},
\end{equation}
it follows from Theorem B of \cite{DH} that $\pi_{b, f}$ has non-zero Dirac cohomology as well.\hfill\qed
\end{example}

To save space, sometimes we will \emph{fold} two strings into one. Let us illustrate how to unfold, say, the first item of Table \ref{table-EIII-string-part-card1}. Information for $\#32$ is complete there, while that for $\#62$ can be recovered from $\#32$ as follows: for the column ``$\lambda/\nu$", we interchange the 1st and 6th, 3rd and 5th coordinates. That is,
$$
(\#62,  [1, b, c, d, e, f], [1, 0, -\frac{1}{2}, 0, 0, 0]).
$$
For the column ``spin LKTs", we should further pass to dual $K$-types. That is,
$$
[b-1, c, d-1, e-1, f-1, 3b+5c+6d+4e+2f+25], \quad b, d, e, f\geq 1.
$$
Whenever a string is folded, we will present its KGB element \textbf{bolded}.

\section{Examples}\label{sec-exam}
This section aims to look at some Dirac series of $E_{6(-14)}$ more closely.

Let us keep the notation for $G:=E_{6(-14)}$ as in Section \ref{sec-structure}. Then $\frq:=\frk+\frp^+$ is a maximal $\theta$-stable parabolic subalgebra of $\frg$. Let $E_{\mu}$ be the $K$-type with highest weight $\mu\in\frt_f^*$. Form the generalized Verma module
$$
N(\mu):=U(\frg)\otimes_{U(\frq)} E_{\mu}.
$$
Let $L(\mu)$ denote the irreducible quotient of $N(\mu)$. Those unitarizable $L(\mu)$ are called (irreducible) \emph{unitary highest weight modules}. They were classified in \cite{EHW, Ja}, and were known to have non-vanishing Dirac cohomology \cite{HPP}. Moreover, the Dirac cohomology of them can be computed via
Theorem 2.2 of \cite {E} and Theorem 7.11 of \cite{HPR}.

As examples, let us recall a family of unitary highest weight modules from  Corollary 12.6 \cite{EHW}. Put
\begin{equation}\label{mu-EHW}
\mu=(0,0,0,0, z, \frac{z+8}{3}, \frac{z+8}{3}, -\frac{z+8}{3}), \mbox{ where } z\in\bbZ_{>0}.
\end{equation}

\begin{example}\label{exam-hwt}
Set $z=1$ in \eqref{mu-EHW}. Then $\mu=[0,0,0,0,1,-18]$ is the lambda-lowest $K$-type of $L(\mu)$. This module has infinitesimal character $\mu+\rho=[-4, 1, 1, 1, 1, 2]$, which is conjugate to $\Lambda:=[1,1,1,0,1,1]$ under the action of $W(\frg, \frt_f)$.

Among all the $207$ irreducible representations in $\Pi(\Lambda)$, one can locate $L(\mu)$ as follows:
\begin{verbatim}
set all=all_parameters_gamma(G, [1,1,1,0,1,1])
all[20]
Value: final parameter(x=427,lambda=[1,1,2,-1,2,1]/1,nu=[0,0,7,-7,6,2]/2)
\end{verbatim}
The last output gives a final parameter of $L(\mu)$.

By Theorem 2.1, to compute $H_D(L(\mu))$, it suffices to look at its $K$-types up to the $\texttt{atlas}$ height $114$. This is realized via the command:
\begin{verbatim}
print_branch_irr_long(all[20], KGB(G,0),114)
(1+0s)*(KGB element #0,[  0,  0, -5,  5, -4,  4 ]) 10     70
(1+0s)*(KGB element #0,[  1,  0, -6,  6, -5,  5 ]) 144    91
(1+0s)*(KGB element #0,[  0,  0, -7,  7, -5,  5 ]) 54     94
(1+0s)*(KGB element #0,[  2,  0, -7,  7, -6,  6 ]) 1050   113
\end{verbatim}
One calculates that they are the following $K$-types in terms of the basis $\{\varpi_1, \dots, \varpi_5, \frac{1}{4}\zeta\}$:
$$
\eta_1:=[0, 0, 0, 0, 1, -18], \, \eta_2:=[0, 1, 0, 0, 1, -21], \, [0, 0, 0, 0, 2, -24], \, \eta_3:=[0, 2, 0, 0, 1, -24].
$$
Their lambda norms and spin norms are
$$
17,\, 27.5, \, 32, \, 41.5,
$$
and
$$
\sqrt{42}, \,\sqrt{42}, \, 5 \sqrt{2}, \,\sqrt{42},
$$
respectively. Note that $\|\Lambda\|=\sqrt{42}$. Thus $\eta_i$, $1\leq i\leq 3$, are precisely all the spin-lowest $K$-types of $L(\mu)$. Then one computes that $H_D(L(\mu))$ consists of the following five $\widetilde{K}$-types without multiplicities:
$$
\quad [0, 0, 0, 0, 0, -12],  \quad [0, 0, 0, 0, 1, \pm 6], \quad [1, 0, 0, 0, 0, 3],
\quad [0, 1, 0, 0, 0, -3].
$$

Note that by Proposition 12.5 of \cite{EHW}, the $K$-types occurring in $L(\mu)$ are exactly the following ones without multiplicities:
$$
[0, x-y, 0, 0, y+1, -3(x+y)-18], \mbox{ with } x\geq y\geq 0.
$$
\end{example}
\begin{rmk}\label{rmk-exam-EHW}
We also obtain the \texttt{atlas} parameters of $L(\mu)$ for other values of $z$. The result is summarized in Table \ref{table-hwt}, where $a\in\bbZ_{\geq 0}$. Note that the first two representations are entries of Table \ref{table-EIII-scattered-part}, while the last one is an entry of Table \ref{table-EIII-string-part-card5-part2}.

\begin{table}[H]
\centering
\caption{A family of highest weight modules}
\begin{tabular}{l|c|c}
$z$ &  ($\# x$,  $\lambda$, $\nu$) &$\Lambda$  \\
\hline
$1$ & ($427$, $[1,1,2,-1,2,1]$, $[0,0,\frac{7}{2},-\frac{7}{2},3,1]$) & $[1,1,1,0,1,1]$\\
$2$ & ($405$, $[2,1,-1,1,1,3]$, $[\frac{7}{2}, 0,-\frac{7}{2}, 0, 3,1]$) & $[1,1,0,1,1,1]$\\
$a+3$ & ($348$, $[a,1,1,1,1,1]$, $[-\frac{7}{2}, 0, 0, 0, 3, 1]$) & $[a,1,1,1,1,1]$
\end{tabular}
\label{table-hwt}
\end{table}
\end{rmk}

\begin{example}\label{exam-Wallach}
In the paper \cite{HPP}, the authors have studied  Dirac cohomology of the Wallach representations for classical groups of Hermitian symmetric type. Note that $E_{6(-14)}$ has a unique Wallach representation, namely  $L(-3\zeta)$, see Theorem 5.2 of \cite{EHW}.
This module has infinitesimal character $\Lambda=[1,1,1,0,1,1]$ as well.
Using the setting of the previous example, it can be located as follows:
\begin{verbatim}
all[2]
Value: final parameter(x=496,lambda=[1,3,1,0,1,1]/1,nu=[0,4,0,-1,1,1]/1)
\end{verbatim}
It appears in Table \ref{table-EIII-scattered-part}.

To compute Dirac cohomology, again it suffices to look at its $K$-types up to the $\texttt{atlas}$ height $114$:
\begin{verbatim}
(1+0s)*(KGB element #0,[  0,  0, -3,  3, -3,  3 ]) 1    46
(1+0s)*(KGB element #0,[  1,  0, -4,  4, -4,  4 ]) 16   67
(1+0s)*(KGB element #0,[  2,  0, -5,  5, -5,  5 ]) 126  88
(1+0s)*(KGB element #0,[  3,  0, -6,  6, -6,  6 ]) 672  110
\end{verbatim}
One calculates that they are the following $K$-types in terms of the basis $\{\varpi_1, \dots, \varpi_5, \frac{1}{4}\zeta\}$:
$$
[0, 0, 0, 0, 0, -12], \quad [0, 1, 0, 0, 0, -15], \quad [0, 2, 0, 0, 0, -18], \quad [0, 3, 0, 0, 0, -21].
$$
Their lambda norms and spin norms are
$$
8,\, 15.5, \, 26, \, 40,
$$
and
$$
\sqrt{42}, \,\sqrt{42}, \, \sqrt{42}, \,\sqrt{42},
$$
respectively. Recall that $\|\Lambda\|=\sqrt{42}$. Thus the above four $K$-types are precisely all the spin-lowest ones of $L(-3\zeta)$. Then we compute that $H_D(L(-3\zeta))$ consists of the following six $\widetilde{K}$-types without multiplicities:
$$
[0,0,0,0,0, \pm 12],  \quad [0,0,0,0,1, \pm 6], \quad [1,0,0,0,0, 3], \quad [0,1,0,0,0, -3].
$$
\end{example}
\begin{rmk}\label{rmk-Wallach}
One calculates that the $\# 502$ module in Table \ref{table-EIII-scattered-part} has the \emph{same} Dirac cohomology as the Wallach module. Let $X$ be the direct sum of the two modules. Enumerate \emph{all} the $W(\frg, \frt)$ translates of $\Lambda=[1,1,1,0,1,1]$ which are dominant regular for $\Delta^+(\frk, \frt)$ (with repetitions) as $\Lambda_1, \dots, \Lambda_{12}$. Then analogous to Theorem 1.2 of \cite{HPP}, we have that
$$
H_D(X)=\bigoplus_{i=1}^{12} E_{\Lambda_i-\rho_c}.
$$
Note that the $\#502$ module in Table \ref{table-EIII-scattered-part} has another parameter which is easier to compare with that of the $\#496$ module:
\begin{verbatim}
set p=parameter (KGB (G)[502],[2,3,2,-1,1,1]/1,[1,4,1,-1,0,0]/1)
set q=parameter (KGB (G)[502],[1,3,1,0,1,1]/1,[1,4,1,-1,0,0]/1)
p=q
Value: true
\end{verbatim}

\end{rmk}

\begin{example}\label{exam-series-hwt}
By Theorem 12.4 (c) of \cite{EHW}, one finds that $L(z \zeta)$ is an irreducible unitary highest weight module of $E_{6(-14)}$ if and only if $z$ is an integer, and that $z=0$ or  $z\in (-\infty, -3]$. We locate these representations in Table \ref{table-hwt-zzeta}, where $f$ runs over the non-negative integers, and the last row gives the label of the table where we can find the representation. For instance, the $\# 26$ representations can be found in Table \ref{table-EIII-string-part-card0}: they are all tempered.

The value $z=-3$ is the first reduction point in the sense of \cite[Theorem 2.4]{EHW}: it gives the Wallach module.
The value $z=0$ produces the trivial representation, while $z=-10, -9, \dots, -4$ are irreducible generalized Verma modules. \hfill\qed
\end{example}

\begin{table}[H]
\caption{Highest weight modules $L(z \zeta)$}
\begin{tabular}{r|c|c|c|c|c|c|c|c|c|c|}
$z$ &   $-f-11$ & $-10$ & $-9$ & $-8$ & $-7$ & $-6$ & $-5$ & $-4$ & $-3$ & $0$\\
\hline
$\#x$ & $\#26$ & $\#32$ & $\#78$ & $\#123$ & $\#238$ & $\#347$ & $\#424$ & $\#454$& $\#496$ & $\#512$\\
\hline
Table label & \ref{table-EIII-string-part-card0} & \ref{table-EIII-string-part-card1} &
\ref{table-EIII-string-part-card2} & \ref{table-EIII-string-part-card3-part2} &
\ref{table-EIII-string-part-card5-part1} & \ref{table-EIII-scattered-part} & \ref{table-EIII-scattered-part} & \ref{table-EIII-scattered-part} & \ref{table-EIII-scattered-part} & \ref{table-EIII-scattered-part}
\end{tabular}
\label{table-hwt-zzeta}
\end{table}

\begin{example}\label{exam-non-hlwt}
Let us look at the representation with $\#x=373$ in Table \ref{table-EIII-scattered-part} more carefully. It has infinitesimal character $\Lambda:=\zeta_1 + \zeta_2 + \zeta_4 + \zeta_6$. Its starting $K$-types are:
\begin{align*}
&{\rm LKT}=[1, 1, 0, 0, 0, 0], [1, 0, 1, 0, 0, -3], [0, 1, 1, 0, 0, 3], [1, 2, 0, 0, 0, -3], [2, 1, 0, 0, 0, 3] \\
& [1, 1, 0, 1, 0, 0], \mbox{ spin LKT}=[0, 0, 2, 0, 0, 0], [0, 2, 1, 0, 0, 0], [2, 0, 1, 0, 0, 0], \dots
\end{align*}
The representation has a unique spin lowest $K$-type which differs from its unique lowest $K$-type. Proposition 3.7 of \cite{HPP}  says that the lowest $K$-type of an irreducible highest/lowest weight module must contribute to the Dirac cohomology.   Therefore, the above representation is neither a highest weight module, nor a lowest weight module. Thus the Dirac series go beyond the classification in \cite{EHW, Ja}.  \hfill\qed
\end{example}

Using the knowledge of infinitesimal character and LKT, we enumerate \emph{all} the non-trivial FS-scattered highest/lowest weight modules in Table \ref{table-FS-hwt-lwt}. The other sixteen non-trivial Dirac series in Table \ref{table-EIII-scattered-part} are neither highest weight modules nor lowest weight modules.

\begin{table}[H]
\caption{FS-scattered highest/lowest weight modules}
\begin{tabular}{r|c|c|c|c|c|c|c|}
hwt & $\#496$ & $\#427$ & $\#299$ & $\#405$ & $\#347$ & $\#424$ & $\#454$ \\
\hline
lwt & $\#502$ & $\#440$ & $\#298$ & $\#412$ & $\#346$ & $\#423$ & $\#465$
\end{tabular}
\label{table-FS-hwt-lwt}
\end{table}

\section{FS-scattered Dirac series and $A_{\frq}(\lambda)$ modules}

This section aims to verify the following statement.

\begin{prop}\label{prop-FS-Aqlambda}
 Any FS-scattered Dirac series of $E_{6(-14)}$ (see Table \ref{table-EIII-scattered-part}) must be a composition factor of an $A_{\frq}(\lambda)$ module.
\end{prop}

Let us start with a few examples.

\begin{example}
Let us look at the string with $\#x=252$ in Table \ref{table-EIII-string-part-card4-part1}. Take $a=0$ and $f=0$.
\begin{verbatim}
set p=parameter(KGB(G,252),[0,1,1,1,1,0],[-5/2,0,0,2,1,-3])
set (P,q)=reduce_good_range(p)
goodness(q,G)
Value: "Weakly good"
Levi(P)
Value: connected real group with Lie algebra 'so*(8)[1,0].u(1).u(1)'
dimension(q)
Value: 1
is_unitary(q)
Value: true
\end{verbatim}
We see that the representation $\texttt{p}$ is a weakly good $A_{\frq}(\lambda)$ module. Now let us move $(a, f)$ from $(0, 0)$ to $(-1, -1)$ and see what happens.
\begin{verbatim}
set qm=parameter(x(q), lambda(q)-[1,0,0,0,0,1], nu(q))
goodness(qm,G)
Value: "Fair"
theta_induce_irreducible(qm,G)
Value:
1*parameter(x=365,lambda=[2,1,-1,1,0,3]/1,nu=[5,0,-5,4,-4,6]/2) [70]
\end{verbatim}
Therefore, the FS-scattered representation with $\#x=365$ in Table \ref{table-EIII-scattered-part} is realized as a fair $A_{\frq}(\lambda)$ module.\hfill\qed
\end{example}

\begin{example}\label{exam-string-limit}
Consider the string with $\#x=348$ (see Table \ref{table-EIII-string-part-card5-part2}). Take $a=1$.
\begin{verbatim}
set p=parameter(KGB(G,348),[1,1,1,1,1,1],[-7/2,0,0,0,3,1])
set (P,q)=reduce_good_range(p)
q=trivial(Levi(P))
Value: true
goodness(q,G)
Value: "Good"
Levi(P)
Value: connected real group with Lie algebra 'so(8,2).u(1)'
\end{verbatim}
We see that $a=1$ gives us an $A_{\frq}(0)$ module, which is actually strongly regular. Now let us move $a$ from $1$ to $-1$.
\begin{verbatim}
dimension(q)
Value: 1
set qm=parameter(x(q), lambda(q)-[2,0,0,0,0,0], nu(q))
goodness(qm,G)
Value: "Fair"
theta_induce_irreducible(qm,G)
Value:
1*parameter(x=405,lambda=[2,1,-1,1,1,3]/1,nu=[7,0,-7,0,6,2]/2) [86]
\end{verbatim}
Therefore, the FS-scattered representation with $\#x=405$ in Table \ref{table-EIII-scattered-part} is realized as a fair $A_{\frq}(\lambda)$ module.
Similar thing happens when $a=-2$.
\begin{verbatim}
set qm=parameter(x(q), lambda(q)-[3,0,0,0,0,0], nu(q))
goodness(qm,G)
Value: "Fair"
theta_induce_irreducible(qm,G)
Value:
1*parameter(x=427,lambda=[1,1,2,-1,2,1]/1,nu=[0,0,7,-7,6,2]/2) [70]
\end{verbatim}
Now let us see what happens when $a=-8$.
\begin{verbatim}
set qm=parameter(x(q), lambda(q)-[9,0,0,0,0,0], nu(q))
goodness(qm,G)
Value: "None"
theta_induce_irreducible(qm,G)
Value:
1*parameter(x=496,lambda=[1,3,1,0,1,1]/1,nu=[0,4,0,-1,1,1]/1) [46]
1*parameter(x=78,lambda=[1,1,1,0,1,1]/1,nu=[0,0,0,-1,1,1]/1) [110]
\end{verbatim}
The Wallach module is realized as a composition factor of an $A_{\frq}(\lambda)$ module, whose other factor is a member of the string with $\#x=78$ (see Table \ref{table-EIII-string-part-card2}) with $(a, b, c, d)=(1, 1, 1, 0)$.\hfill\qed
\end{example}

One way of realizing  each FS-scattered Dirac series of $E_{6(-14)}$ as a composition factor of certain $A_{\frq}(\lambda)$ module is described in Table \ref{table-FS-Aqlambda}, where ``WF" means the $A_{\frq}(\lambda)$ module is weakly fair. Whenever there is a star, it means that the corresponding $A_{\frq}(\lambda)$ module is reducible and we need to pass to a composition factor to obtain the FS-scattered Dirac series. Otherwise, it means that the $A_{\frq}(\lambda)$ module is irreducible and unitary. In particular, Proposition \ref{prop-FS-Aqlambda} follows.

\begin{table}[H]
\centering
\caption{FS-scattered representations and $A_{\frq}(\lambda)$ modules}
\begin{tabular}{l|c|l|c}
$\#x$ &  $A_{\frq}(\lambda)$ & $\#x$ &  $A_{\frq}(\lambda)$\\
\hline
$\#463$ & $\#435$, $f=-1$, Fair   & $\#412$ & $\#371$, $f=-1$, Fair \\
$\#502^*$ & $\#371$, $f=-8$, None & $\#496^*$ & $\#348$, $a=-8$, None \\
$\#492$ & $\#432$, $a=-2$, Fair   & $\#486$ & $\#435$, $f=-2$, Fair \\
$\#440$ & $\#371$, $f=-2$, Fair   & $\#427$ & $\#348$, $a=-2$, Fair \\
$\#418$ & $\#370$, $b=-1$, Fair   & $\#299$ & $\# 244$, $b=-1$, Fair \\
$\#298$ & $\# 243$, $b=-1$, Fair  & $\#469$ & $\#432$, $a=-1$, Fair \\
$\#405$ & $\# 348$, $a=-1$, Fair  & $\#493$ & $\# 435$, $f=-3$, Fair \\
$\#465$ & $\# 371$, $f=-3$, Fair  & $\#495$ & $\# 435$, $f=-4$, Fair \\
$\#423$ & $\# 371$, $f=-4$, Fair  & $\#266$ & $\# 222$, $c=-1$, Fair \\
$\#499$ & $\# 435$, $f=-5$, WF    & $\#438$ & $\# 370$, $b=-2$, Fair \\
$\#373$ & $\# 261$, $a=f=-1$, Fair & $\#369$ & $\# 256$, $a=f=-1$, Fair \\
$\#365$ & $\# 252$, $a=f=-1$, Fair & $\#347$ & $\# 244$, $b=-2$, Fair \\
$\#346$ & $\# 243$, $b=-2$, Fair   &  $\#501$ & $\# 432$, $a=-4$, Fair \\
$\#424$ & $\# 348$, $a=-4$, Fair   &  $\#276$ & $\# 224$, $e=-1$, Fair \\
$\#500$ & $\# 432$, $a=-3$, Fair   &  $\#454$ & $\# 348$, $a=-3$, Fair
\end{tabular}
\label{table-FS-Aqlambda}
\end{table}

\section{Appendix: tables of all the strings of  $\widehat{E_{6(-14)}}^{\mathrm{d}}$}
\label{sec-appendix}

\newpage

\begin{table}[H]
\centering
\caption{Strings of $\widehat{E_{6(-14)}}^{\mathrm{d}}$ with $|{\rm supp}(x)|=0$}
\begin{tabular}{rcc}
\Xhline{0.8pt}
$\# x$ & spin LKT = the unique LKT \\
\Xhline{0.65pt}
$0$ & $[e+f,a-1,c+d,b-1,d+e,a+2c+e-f+3]$, $a, b, c+d, d+e, e+f\geq 1$ \\
$1$ & $[e + f, a + c, d-1, b-1, c + d + e+1, a-c+e-f]$, $b, d, a+c, e+f\geq 1$\\
$2$ & $[d + e + f+1, a-1, c-1, b + d, e-1, a+2c+3d+e-f+6]$, $a, c, e, b+d \geq 1$\\
$3$ & $[f-1, a-1, c + d + e+1, b-1, d-1, a+2c-2e-f]$, $a,b,d,f\geq 1$\\
$4$ & $[e-1, a-1, c + d, b-1, d + e + f+1, a+2c+e+2f+6]$, $a, b, e, c+d\geq 1$\\
$5$ & $[e + f, c-1, d-1, b-1, a + c + d +e+2, -2a-c+e-f-3]$, $b,c,d,e+f\geq 1$\\
$6$ & $[f-1, a + c, d + e, b-1, c + d, a-c-2e-f-3]$, $b,f,a+c,c+d,d+e\geq 1$\\
$7$ & $[e-1, a + c, d-1, b-1, c + d + e + f+2, a-c+e+2f+3]$, $b,d,e,a+c\geq 1$\\
$8$ & $[b + d + e + f+2, a-1, c-1, d-1, e-1, a+3b+2c+3d+e-f+9]$\\
& $a,c,d,e\geq 1$\\

$9$ & $[d + e, a-1, c-1, b + d, e + f, a+2c+3d+e+2f+9]$\\
& $a, c, b+d, d+e, e+f\geq 1$\\

$10$ & $[f-1, c-1, d + e, b-1, a + c + d+1, -2a-c-2e-f-6]$, $b,c,f,d+e\geq 1$\\
$11$ & $[e-1, c-1, d-1, b-1, a + c + d + e + f+3, -2a-c+e+2f]$, $b,c,d,e\geq 1$\\
$12$ & $[f-1, a + c + d+1, e-1, b + d, c-1, a-c-3d-2e-f-6]$, $c,e,f,b+d\geq 1$\\
$13$ & $[b + d + e+1, a-1, c-1, d-1, e + f, a+3b+2c+3d+e+2f+12]$\\
 & $a,c,d,e+f\geq 1$\\

$14$ & $[d-1, a-1, c-1, b + d + e+1, f-1, a+2c+3d+4e+2f+12]$\\
& $a,c,d,f\geq 1$\\
$15$ & $[f-1, c + d, e-1, b + d, a + c, -2a-c-3d-2e-f-9]$\\
 & $e,f,a+c, c+d,b+d\geq 1$\\

$16$ & $[f-1, a + b + c + d+2, e-1, d-1, c-1, a-3b-c-3d-2e-f-9]$\\
 & $c,d,e,f\geq 1$ \\
$17$ & $[b + d, a-1, c-1, d + e, f-1, a+3b+2c+3d+4e+2f+15]$\\
 & $a,c,d,f\geq 1$; or $d=0, a,b,c,e,f\geq 1$\\

$18$ & $[f-1, b + c + d+1, e-1, d-1, a + c, -2a-3b-c-3d-2e-f-12]$\\
 &$d,e,f, a+c\geq 1$\\
$19$ & $[f-1, d-1, e-1, b + c + d+1, a-1, -2a-4c-3d-2e-f-12]$\\
& $a,d,e,f\geq 1$\\
$20$ & $[b-1, a-1, c + d, e-1, f-1, a+3b+2c+6d+4e+2f+18]$\\
 &$a,b,e,f,c+d\geq 1$\\
$21$ & $[f-1, b + d, e-1, c + d, a-1, -2a-3b-4c-3d-2e-f-15]$\\
 & $a,d,e,f\geq 1$; or $d=0, a,b,c,e,f\geq 1$\\
$22$ & $[b-1, a + c, d-1, e-1, f-1, a+3b+5c+6d+4e+2f+21]$\\
 & $b,d,e,f,a+c\geq 1$\\
$23$ & $[f-1, b-1, d + e, c-1, a-1, -2a-3b-4c-6d-2e-f-18]$\\
 &$a,b,c,f, d+e\geq 1$\\
$24$ & $[b-1, c-1, d-1, e-1, f-1, 4a+3b+5c+6d+4e+2f+24]$\\
 & $b,c,d,e,f\geq 1$\\
$25$ & $[e + f, b-1, d-1, c-1, a-1, -2a-3b-4c-6d-5e-f-21]$\\
& $a,b,c,d,e+f\geq 1$\\
$26$ & $[e-1,b-1,d-1,c-1,a-1, -2a-3b-4c-6d-5e-4f-24]$\\
& $a,b,c,d,e\geq 1$\\
\Xhline{0.8pt}
\end{tabular}
\label{table-EIII-string-part-card0}
\end{table}

\begin{table}[H]
\centering
\caption{Strings of $\widehat{E_{6(-14)}}^{\mathrm{d}}$ with $|{\rm supp}(x)|=1$ (folded)}
\begin{tabular}{rcr}
\Xhline{0.8pt}
$\# x$ &   $\lambda/\nu$     & spin LKTs  \\
\Xhline{0.65pt}
$32$ & $[a,  b, c, d, e, 1]$   & $[e, b-1, d-1, c-1, a-1, -2a-3b-4c-6d-5e-25] $\\
$\textbf{62}$ & $[0,0,0,0,-\frac{1}{2}, 1]$ & $a, b, c, d\geq 1$ \\
\cline{1-1}

$31$ & $[a,  b, c, d, e, 1]$   & $[b+d+e+2, a-1, c-1, d-1, e, a+3b+2c+3d+e+11] $\\
$\textbf{61}$ & $[0,0,0,0,-\frac{1}{2}, 1]$ & $a, c, d\geq 1$ \\
\cline{1-1}

$30$ & $[a,  b, c, d, e, 1]$   & $[e, c-1, d-1, b-1, a+c+d+e+3, -2a-c+e-1]$\\
$\textbf{59}$ & $[0,0,0,0,-\frac{1}{2}, 1]$ & $b, c, d\geq 1$ \\
\cline{1-1}

$29$ & $[a,  b, c, d, e, 1]$   & $[e+d+1, a-1, c-1, b+d, e, a+2c+3d+e+8]$\\
$\textbf{60}$ & $[0,0,0,0,-\frac{1}{2}, 1]$ & $a, c, b+d, d+e\geq 1$ \\
\cline{1-1}

$28$ & $[a,  b, c, d, e, 1]$   & $[e, a+c, d-1, b-1, c+d+e+2, a-c+e+2]$\\
$\textbf{57}$ & $[0,0,0,0,-\frac{1}{2}, 1]$ & $b, d, a+c\geq 1$ \\
\cline{1-1}

$27$ & $[a,  b, c, d, e, 1]$   & $[e, a-1, c+d, b-1, d+e+1, a+2c+e+5]$\\
$\textbf{58}$ & $[0,0,0,0,-\frac{1}{2}, 1]$ & $a, b, c+d, d+e\geq 1$ \\
\cline{1-1}

$38$ & $[a,  b, c, d,  1, f]$   & $[f, b-1, d, c-1,  a-1,  -2a-3b-4c-6d-f-23]$\\
$\textbf{50}$ & $[0,0,0,-\frac{1}{2},1,-\frac{1}{2}]$ & $a, b, c,  d+f\geq 1$ \\
\cline{1-1}

$37$ & $[a,  b, c, d,  1, f]$   & $[b+d+1, a-1, c-1, d, f,  a+3b+2c+3d+2f+16]$\\
$\textbf{49}$ & $[0,0,0,-\frac{1}{2},1,-\frac{1}{2}]$ & $a, c, b+d, d+f\geq 1$ \\
\cline{1-1}

$36$ & $[a,  b, c, d,  1, f]$   & $[d, a-1, c-1, b+d+1, f,  a+2c+3d+2f+13]$\\
$\textbf{48}$ & $[0,0,0,-\frac{1}{2},1,-\frac{1}{2}]$ & $a, c, b+d\geq 1$ \\
\cline{1-1}

$35$ & $[a,  b, c, d,  1, f]$   & $[f, c-1, d, b-1, a+c+d+2, -2a-c-f-5]$\\
$\textbf{47}$ & $[0,0,0,-\frac{1}{2},1,-\frac{1}{2}]$ & $ b, c, d+f\geq 1$ \\
\cline{1-1}

$34$ & $[a,  b, c, d,  1, f]$   & $[f, a+c, d, b-1, c+d+1, a-c-f-2]$\\
$\textbf{45}$ & $[0,0,0,-\frac{1}{2},1,-\frac{1}{2}]$ & $ b, a+c, c+d, d+f\geq 1$ \\
\cline{1-1}

$33$ & $[a,  b, c, d,  1, f]$   & $[f, a-1, c+d+1, b-1, d, a+2c-f+1]$\\
$\textbf{46}$ & $[0,0,0,-\frac{1}{2},1,-\frac{1}{2}]$ & $a, b, c+d\geq 1$ \\
\cline{1-1}

$43$ & $[a,  b, c, 1, e, f]$   & $[b, a-1, c, e, f-1, a+3b+2c+4e+2f+21]$\\
$\textbf{44}$ & $[0,-\frac{1}{2},-\frac{1}{2},1,-\frac{1}{2},0]$ & $a, f, b+c, c+e\geq 1$ \\
\cline{1-1}

$40$ & $[a,  b, c, 1, e, f]$   & $[e, a-1, c, b,  e+f+1,  a+2c+e+2f+9]$\\
$\textbf{42}$ & $[0,-\frac{1}{2},-\frac{1}{2},1,-\frac{1}{2},0]$ & $a, b+c, c+e,  e+f\geq 1$ \\
\cline{1-1}

$39$ & $[a,  b, c, 1, e, f]$   & $[e+f+1, a-1, c, b,  e,  a+2c+e-f+6]$\\
$\textbf{41}$ & $[0,-\frac{1}{2},-\frac{1}{2},1,-\frac{1}{2},0]$ & $a, b+c, b+e,  e+f\geq 1$ \\
\cline{1-1}

$54$ & $[a, 1, c, d, e, f]$   & $[d, a-1,  c-1, d+e+1, f-1,  a+2c+3d+4e+2f+15]$\\
$\textbf{56}$ & $[0, 1,0, -\frac{1}{2},0,0]$ & $a, c, f, d+e\geq 1$ \\
\cline{1-1}

$52$ & $[a, 1, c, d, e, f]$   & $[d+e+1, a-1, c-1, d, e+f, a+2c+3d+e+2f+12]$\\
$\textbf{55}$ & $[0, 1,0, -\frac{1}{2},0,0]$ & $a, c, d+e, e+f\geq 1$ \\
\cline{1-1}

$51$ & $[a, 1, c, d, e, f]$   & $[d+e+f+2, a-1, c-1, d, e-1, a+2c+3d+e-f+9]$\\
$\textbf{53}$ & $[0, 1,0, -\frac{1}{2},0,0]$ & $a, c, e\geq 1$ \\

\Xhline{0.8pt}
\end{tabular}
\label{table-EIII-string-part-card1}
\end{table}

\begin{table}[H]
\centering
\caption{Strings of $\widehat{E_{6(-14)}}^{\mathrm{d}}$ with $|{\rm supp}(x)|=2$ (folded)}
\begin{tabular}{lcr}
\Xhline{0.8pt}
$\# x$ &   $\lambda/\nu$     & spin LKTs  \\
\Xhline{0.65pt}

$78$           & $[a,  b, c, d,1,1]$   & $[0, b-1, d, c-1, a-1, -2a-3b-4c-6d-27]$\\
$\textbf{102}$ & $[0,0,0,-1,1,1]$ & $a, b, c\geq 1$ \\
\cline{1-1}

$77$           & $[a,  b, c, d,1,1]$   & $[b+d+2, a-1, c-1, d, 0, a+3b+2c+3d+15]$\\
$\textbf{101}$ & $[0,0,0,-1,1,1]$ & $a, c, b+d\geq 1$ \\
\cline{1-1}

$76$          & $[a,  b, c, d,1,1]$   & $[0, c-1, d, b-1, a+c+d+3, -2a-c-3]$\\
$\textbf{99}$ & $[0,0,0,-1,1,1]$ & $b, c\geq 1$ \\
\cline{1-1}

$75$           & $[a,  b, c, d,1,1]$   & $[d+1, a-1, c-1, b+d+1, 0, a+2c+3d+12]$\\
$\textbf{100}$ & $[0,0,0,-1,1,1]$ & $a, c, b+d\geq 1$ \\
\cline{1-1}

$74$          & $[a,  b, c, d,1,1]$   & $[0, a+c, d, b-1,  c+d+2, a-c]$\\
$\textbf{97}$ & $[0,0,0,-1,1,1]$      & $b, a+c, c+d\geq 1$ \\
\cline{1-1}

$73$          & $[a,  b, c, d,1,1]$   & $[0, a-1, c+d+1, b-1,  d+1, a+2c+3]$\\
$\textbf{98}$ & $[0,0,0,-1,1,1]$ & $a, b, c+d\geq 1$ \\
\cline{1-1}

$63$          & $[a,  b, c, 1,e, 1]$   & $[e+1, a-1,  c,b,e+1,a+2c+e+8]$\\
$\textbf{71}$ & $[0,-\frac{1}{2},-\frac{1}{2},1,-1,1]$  & $a, b+c\geq 1$ \\
\cline{1-1}

$84$          & $[a,  b, c, 1,1, f]$   & $[f, b,  0, c,a-1,-2a-3b-4c-f-26]$\\
$\textbf{89}$ & $[0,-1,-1,1,1,-1]$ & $a, b+f, c+f\geq 1$ \\
\cline{1-1}

$83$          & $[a,  b, c, 1,1, f]$   & $[b+1, a-1, c, 0, f, a+3b+2c+2f+22]$\\
$\textbf{90}$ & $[0,-1,-1,1,1,-1]$ & $a, b+c, c+f\geq 1$ \\
\cline{1-1}

$82$          & $[a,  b, c, 1,1, f]$   & $[f, c, 0, b, a+c+2, -2a-c-f-8]$\\
$\textbf{86}$ & $[0,-1,-1,1,1,-1]$     & $a+c, b+f, c+f\geq 1$ \\
\cline{1-1}

$81$          & $[a,  b, c, 1,1, f]$   & $[0, a-1, c, b+1, f+1, a+2c+2f+13]$\\
$\textbf{88}$ & $[0,-1,-1,1,1,-1]$ & $a, b+c\geq 1$ \\
\cline{1-1}

$80$          & $[a,  b, c, 1,1, f]$   & $[f, a+c+1,  0, b,c+1,a-c-f-5]$\\
$\textbf{85}$ & $[0,-1,-1,1,1,-1]$ & $a+c, b+ c, b+f\geq 1$ \\
\cline{1-1}

$79$          & $[a,  b, c, 1,1, f]$   & $[f+1, a-1,  c+1, b,0,a+2c-f+4]$\\
$\textbf{87}$ & $[0,-1,-1,1,1,-1]$ & $a, b+ c\geq 1$ \\
\cline{1-1}

$64$          & $[a,  b, 1, d, e,1]$                   & $[e, a,  d, b-1,d+e+2,a+e+4]$\\
$\textbf{70}$ & $[-\frac{1}{2},0,1,-\frac{1}{2},-\frac{1}{2},1]$ & $a+d, b, d+e\geq 1$ \\
\cline{1-1}

$65$ & $[a,  b, 1, d, 1, f]$                  & $[f, a,  d+1, b-1,d+1,a-f]$\\
     & $[-\frac{1}{2},0,1,-1,1,-\frac{1}{2}]$ & $b, a+d+f\geq 1$ \\
\cline{1-1}

$66$ & $[a,  1, c, d, e, 1]$   & $[d+e+2, a-1, c-1, d, e, a+2c+3d+e+11]$\\
$\textbf{72}$ & $[0,1,0,-\frac{1}{2},-\frac{1}{2},1]$ & $a, c, d+e\geq 1$ \\
\cline{1-1}

$67$ & $[a,  1, c, d, 1, f]$   & $[d+1, a-1, c-1, d+1, f, a+2c+3d+2f+16]$\\
$\textbf{68}$ & $[0,1,0,-1,1,-\frac{1}{2}]$ & $a, c\geq 1$ \\
\cline{1-1}

$95$ & $[a,  1, c, 1, e, f]$   & $[0, a-1, c, e+1, f-1, a+2c+4e+2f+21]$\\
$\textbf{96}$ & $[0,1,-1,1,-1,0]$ & $a, f,  c+e\geq 1$ \\
\cline{1-1}

$92$ & $[a,  1, c, 1, e, f]$   & $[e+1, a-1, c, 0, e+f+1, a+2c+e+2f+12]$\\
$\textbf{94}$ & $[0,1,-1,1,-1,0]$ & $a, c+e, e+f\geq 1$ \\
\cline{1-1}

$91$ & $[a,  1, c, 1, e, f]$   & $[e+f+2, a-1, c, 0, e, a+2c+e-f+9]$\\
$\textbf{93}$ & $[0,1,-1,1,-1,0]$ & $a, e+f\geq 1$ \\
\cline{1-1}

$69$ & $[1, b, c, d, e, 1]$   & $[e, c, d-1, b-1, c+d+e+3, e-c]$\\
& $[1,0,-\frac{1}{2},0,-\frac{1}{2},1]$ & $b, d\geq 1$ \\
\Xhline{0.8pt}
\end{tabular}
\label{table-EIII-string-part-card2}
\end{table}

\begin{table}[H]
\centering
\caption{Strings of $\widehat{E_{6(-14)}}^{\mathrm{d}}$ with $|{\rm supp}(x)|=3$: part one (folded)}
\begin{tabular}{lcr}
\Xhline{0.8pt}
$\# x$ &   $\lambda/\nu$    & spin LKTs  \\
\Xhline{0.65pt}
$103$ & $[a,  b-1, c-1, 0,3,0]$  & $[1, a-1, c-1, b, 1, a+2c+10]$, $a,b,c\geq 1$ \\
$\textbf{109}$ & $[0,-\frac{1}{2},-\frac{1}{2},0,1,0]$ & $[1, a-1, c, b-1, 1, a+2c+4]$, $a,b,c\geq 1$\\
\cline{1-1}

$105$ & $[a-1,  b-2,0, 3,0, f-1]$  & $[f-1, a, 1,b-1,1, a-f-3]$, $a,b,f\geq 1$ \\
 & $[-\frac{1}{2},-1,0,1,0,-\frac{1}{2}]$ & $[f, a-1, 1, b-1,1,a-f+3]$, $a,b,f\geq 1$\\
\cline{1-1}

$104$ & $[a,  0, c-2, 3,0, f-1]$  & $[1, a-1, c-1,1,f-1, a+2c+2f+18]$, $a,c,f\geq 1$ \\
$\textbf{106}$ & $[0,0,-1,1,0,-\frac{1}{2}]$ & $[1, a-1, c-1,1,f, a+2c+2f+12]$, $a,c,f\geq 1$\\
\cline{1-1}

$120$ & $[a,  b-1, c-1, 2, 0,2]$  & $[0, a-1, c, b+1, 0, a+2c+11]$, $a,b\geq 1$ \\
$\textbf{145}$ & $[0,-1,-1,1,0,1]$ & $[1, a-1, c-1, b+1, 1, a+2c+14]$, $a,c\geq 1$\\
\cline{1-1}

$118$ & $[a,  b-1, c-1, 2, 0,2]$  & $[0, a-1, c+1, b, 0, a+2c+5]$, $a,c\geq 1$ \\
$\textbf{144}$ & $[0,-1,-1,1,0,1]$ & $[1, a-1, c+1, b-1, 1, a+2c+2]$, $a,b\geq 1$\\
\cline{1-1}

$124$ & $[a-1,  b-1,  2, 0,2, f-1]$  & $[f+1, a-1, 1, b-1, 1, a-f+6]$, $a,b\geq 1$ \\
$\textbf{125}$ & $[-1,-1,1,0,1,-1]$ & $[f+1, a, 0, b, 0, a-f+3]$, $b, f\geq 1$\\
\cline{1-1}

$134$ & $[a,  2, c-1, 0, 2, f-1]$  & $[0, a-1, c,0, f, a+2c+2f+19]$, $a,c\geq 1$ \\
$\textbf{141}$ & $[0,1,-1,0,1,-1]$ & $[1, a-1, c-1,1,f-1, a+2c+2f+22]$, $a,c,f\geq 1$\\
\cline{1-1}

$132$ & $[a,  2, c-1, 0, 2, f-1]$  & $[1, a-1, c-1,1, f+1, a+2c+2f+10]$, $a,c\geq 1$ \\
$\textbf{139}$ & $[0,1,-1,0,1,-1]$ & $[0, a-1, c,0,f+1, a+2c+2f+13]$, $a,c,f\geq 1$\\
\Xhline{0.8pt}
\end{tabular}
\label{table-EIII-string-part-card3-part1}
\end{table}

\begin{table}[H]
\centering
\caption{Strings of $\widehat{E_{6(-14)}}^{\mathrm{d}}$ with $|{\rm supp}(x)|=3$: part two (folded)}
\begin{tabular}{lccr}
\Xhline{0.8pt}
$\# x$ &   $\lambda$   & $\nu$ & spin LKTs  \\
\Xhline{0.65pt}

$112$ & $[  1,   b,  c, d,     1,     1]$ & $[1,0,-\frac{1}{2},-1,1,1]$ & $[0, c, d, b-1, c+d+3, -c-2]$\\
$\textbf{114}$ & & & $b\geq 1$, $c+d\geq 1$ \\
\cline{1-1}

$149$ & $[a, b, c, 1, 1, 1]$ & $[0,-\frac{3}{2},-\frac{3}{2},1,1,1]$ & $[0, a-1, c+1, b+1, 0,  a+2c+9]$\\
$\textbf{167}$& & &  $a\geq 1$, $b+c\geq 1$\\
\cline{1-1}

$123$ & $[a, b, c, 1, 1, 1]$ & $[0,-\frac{3}{2},-\frac{3}{2},\frac{3}{2},0,\frac{3}{2}]$ & $[0, b, 0, c, a-1, -2a-3b-4c-30]$\\
$\textbf{147}$ & & & $a\geq 1$\\
\cline{1-1}

$122$ & $[a, b, c, 1, 1, 1]$ & $[0,-\frac{3}{2},-\frac{3}{2},\frac{3}{2},0,\frac{3}{2}]$ & $[b+2, a-1, c, 0, 0, a+3b+2c+21]$\\
$\textbf{146}$& & & $a\geq 1$, $b+c\geq 1$ \\
\cline{1-1}

$121$ & $[a, b, c, 1, 1, 1]$ & $[0,-\frac{3}{2},-\frac{3}{2},\frac{3}{2},0,\frac{3}{2}]$ & $[0, c, 0, b, a+c+3, -2a-c-6]$\\
$\textbf{143}$ & & & $a+c\geq 1$ \\
\cline{1-1}

$119$ & $[a, b, c, 1, 1, 1]$ & $[0,-\frac{3}{2},-\frac{3}{2},\frac{3}{2},0,\frac{3}{2}]$ & $[0, a+c+1, 0, b, c+2, a-c-3]$\\
$\textbf{142}$& & & $a+c\geq 1$, $b+c\geq 1$ \\
\cline{1-1}

$107$ & $[a, b, 1, d, 1, 1]$ & $[-\frac{1}{2},0,1,-\frac{3}{2},1,1]$ & $[0, a,d+1,b-1,d+2,a+2]$ \\
$\textbf{115}$ & & & $b\geq 1$\\
\cline{1-1}

$108$ & $[a, b, 1, 1, e, 1]$ & $[-1,-1,1,1,-\frac{3}{2},1]$ & $[e+1, a, 0, b, e+2, a+e+7]$ \\
$\textbf{113}$ & & & $a+b\geq 1$ \\
\cline{1-1}

$154$ & $[a, b, 1, 1, 1, f]$ & $[-\frac{3}{2},-2,1,1,1,-\frac{3}{2}]$ & $[f+1, a+1,0,b+1,0,a-f]$\\
& & &  $a+b+f\geq 1$ \\
\cline{1-1}

$128$ & $[a, b, 1, 1, 1, f]$ & $[-\frac{3}{2},-\frac{3}{2},\frac{3}{2},0,\frac{3}{2},-\frac{3}{2}]$ & $[b+1,a,0,0,f,a+3b+2f+27]$\\
$\textbf{129}$& & &  $a+b\geq 1$, $a+f\geq 1$ \\
\cline{1-1}

$126$ & $[a, b, 1, 1, 1, f]$ & $[-\frac{3}{2},-\frac{3}{2},\frac{3}{2},0,\frac{3}{2},-\frac{3}{2}]$ & $[0, a,0,b+1,f+2,a+2f+12]$ \\

$\textbf{127}$ &  &  & $a+b\geq 1$ \\
\cline{1-1}

$110$ & $[a, 1, c, d, 1, 1]$ & $[0,1,0,-\frac{3}{2},1,1]$ & $[d+2,a-1,c-1,d+1,0,a+2c+3d+15]$ \\
$\textbf{117}$ & & & $a\geq 1$, $c\geq 1$\\
\cline{1-1}

$111$ & $[a, 1, c, 1, e, 1]$ & $[0,1,-1,1,-\frac{3}{2},1]$ & $[e+2,a-1,c, 0, e+1, a+2c+e+11]$ \\
$\textbf{116}$  & & & $a\geq 1$ \\
\cline{1-1}

$157$ & $[a, 1, c, 1, 1, f]$ & $[0,1,-2,1,1,-\frac{3}{2}]$ & $[0, a-1,c+1,0,f+1, a+2c+2f+19]$\\
$\textbf{160}$& & & $a\geq 1$\\
\cline{1-1}

$135$ & $[a, 1, c, 1, 1, f]$ & $[0,\frac{3}{2},-\frac{3}{2},0,\frac{3}{2},-\frac{3}{2}]$ & $[f, 0, 0, c+1, a-1, -2a-4c-f-26]$\\
$\textbf{140}$ & &  & $a\geq 1$, $c+f\geq 1$ \\
\cline{1-1}

$133$ & $[a, 1, c, 1, 1, f]$ & $[0,\frac{3}{2},-\frac{3}{2},0,\frac{3}{2},-\frac{3}{2}]$ & $[f,c+1, 0, 0, a+c+2, -2a-c-f-11]$\\
$\textbf{137}$ & &  & $a+c\geq 1$, $c+f\geq 1$ \\
\cline{1-1}

$131$ & $[a, 1, c, 1, 1, f]$ & $[0,\frac{3}{2},-\frac{3}{2},0,\frac{3}{2},-\frac{3}{2}]$ & $[f,a+c+2, 0, 0, c+1, a-c-f-8]$\\
$\textbf{136}$ & & & $a+c\geq 1$ \\
\cline{1-1}

$130$ & $[a, 1, c, 1, 1, f]$ & $[0,\frac{3}{2},-\frac{3}{2},0,\frac{3}{2},-\frac{3}{2}]$ & $[f+2,a-1,c+1, 0, 0, a+2c-f+7]$ \\
$\textbf{138}$ &  & &  $a\geq 1$\\
\Xhline{0.8pt}
\end{tabular}
\label{table-EIII-string-part-card3-part2}
\end{table}

\begin{table}[H]
\centering
\caption{Strings of $\widehat{E_{6(-14)}}^{\mathrm{d}}$ with $|{\rm supp}(x)|=4$: part one}
\begin{tabular}{lccr}
\Xhline{0.8pt}
$\# x$ &   $\lambda$   & $\nu$ & spin LKTs  \\
\Xhline{0.65pt}
$255$ & $[a, b, 1, 1, 1, 1]$ & $[-2,-3,1,1,1,1]$ & $[0, a+1,0,b+2,0,a+5]$ \\

$174$ & $[a, b, 1, 1, 1, 1]$ & $[-2,-2,2,0,0,2]$ & $[0, b+1,0,0,a,-2a-3b-31]$ \\

$173$ & $[a, b, 1, 1, 1, 1]$ & $[-2,-2,2,0,0,2]$ & $[b+2,a,0,0,0,a+3b+26]$ \\

$172$ & $[a, b, 1, 1, 1, 1]$ & $[-2,-2,2,0,0,2]$ & $[0,0,0,b+1,a+3,-2a-10]$ \\

$259$ & $[a, 1, c,1,1,1]$ & $[0,1,-3,1,1,1]$ &  $[0, a-1, c+2, 0,0, a+2c+15]$, $a\geq 1$\\

$180$ & $[a, 1, c,1,1,1]$ & $[0, 2,-2,0,0,2]$ &  $[0, 0, 0, c+1,  a-1,  -2a-4c-30]$, $a\geq 1$\\

$178$ & $[a, 1, c,1,1,1]$ &  $[0, 2,-2,0,0,2]$ &  $[0, c+1, 0, 0, a+c+3,  -2a-c-9]$, $a+c\geq 1$ \\

$176$ & $[a, 1, c,1,1,1]$  & $[0, 2,-2,0,0,2]$ &  $[0, a+c+2, 0, 0, c+2, a-c-6]$, $a+c\geq 1$ \\

$159$ & $[a, 1, 1, 1, e, 1]$ & $[-\frac{3}{2},\frac{3}{2},\frac{3}{2},0,-2,1]$ & $[e+2, a, 0, 0, e+2, a+e+10]$ \\

$261$ & $[a, 1, 1, 1, 1, f]$ & $[-\frac{5}{2},1,0,2,0,-\frac{5}{2}]$ & $[f+2, a+2, 0, 0, 0, a-f]$ \\

$256$ & $[a, 1, 1, 1, 1, f]$ & $[-3,0,1,2,0,-\frac{5}{2}]$ & $[0, a+1, 0, 0, f+2, a+2f+21]$ \\

$252$ & $[a, 1, 1, 1, 1, f]$ & $[-\frac{5}{2},0,0,2,1,-3]$ & $[f+1,  0, 0, 0, a+2, -2a-f-21]$ \\

\cline{1-4}
$214$ & $[a-1, 3,1,0,1, f-1]$  & $[-\frac{3}{2},2,1,-1,1,-\frac{3}{2}]$ &  $[f+1, a, 0, 1, 0, a-f+3]$, $f\geq 1$ \\
&  &          &$[f, a+1, 0, 1, 0, a-f-3]$, $a\geq 1$\\
\cline{1-4}

$158$ & $[a-1, 3,0,1,0, f-1]$  & $[-1,2,0,0,0,-1]$  & $[f-1, a+1,0,1,0,  a-f-6]$, $a,f\geq 1$ \\
    &  &       &$[f+1, a-1, 0, 1, 0,  a-f+6]$, $a,f\geq 1$\\
\cline{1-4}

$162$ & $[1, b, c, 1, 1, 1]$ & $[1, -\frac{3}{2},-2,\frac{3}{2},0,\frac{3}{2},]$ & $[0,c+1,0,b,c+3,-c-5]$\\

$166$ & $[1, b,  1, d, 1, 1]$ & $[ 1,0,1,-2,1,1]$ & $[0,0,d+1,b-1,d+3,0]$, $b\geq 1$\\

$165$ & $[1, b,  1, 1, e, 1]$ & $[ \frac{3}{2},-\frac{3}{2},0,\frac{3}{2},-2,1]$ & $[e+1,0,0,b,e+3,e+5]$ \\

$274$ & $[1, b,  1, 1, 1, f]$ & $[1,-3,1,1,1,-2]$ & $[f+1,0,0,b+2,0,-f-5]$\\

$192$ & $[1, b,  1, 1, 1, f]$ & $[2,-2,0,0,2,-2]$ & $[f,b+2,0,0,0,-3b-f-26]$, $b+f\geq 1$\\

$191$ & $[1, b,  1, 1, 1, f]$ & $[2,-2,0,0,2,-2]$ & $[b+1,0,0,0,f,3b+2f+31]$ \\

$189$ & $[1, b,  1, 1, 1, f]$ & $[2,-2,0,0,2,-2]$ & $[0,0,0,b+1,f+3,2f+10]$ \\

$168$ & $[1, 1, c, 1, 1, f]$ & $[1,\frac{3}{2},-2,0,\frac{3}{2},-\frac{3}{2}]$ & $[f, c+2, 0, 0, c+2, -c-f-10]$ \\

$281$ & $[1, 1,  1, 1, e, f]$ & $[1,1,1,1,-3,0]$ & $[f-1,0,e+2,0,0,-2e-f-15]$, $f\geq 1$\\

$198$ & $[1, 1,  1, 1, e, f]$ & $[2,2,0,0,-2,0]$ & $[0,0,0,e+1,f-1,4e+2f+30]$, $f\geq 1$\\

$194$ & $[1, 1,  1, 1, e, f]$ & $[2,2,0,0,-2,0]$ & $[e+1,0,0,0,e+f+3,e+2f+9]$, $e+f\geq 1$\\

$193$ & $[1, 1,  1, 1, e, f]$ & $[2,2,0,0,-2,0]$ & $[e+f+2,0,0,0,e+2,e-f+6]$,  $e+f\geq 1$\\
\Xhline{0.8pt}
\end{tabular}
\label{table-EIII-string-part-card4-part1}
\end{table}

\begin{table}[H]
\centering
\caption{Strings of $\widehat{E_{6(-14)}}^{\mathrm{d}}$ with $|{\rm supp}(x)|=4$: part two (folded)}
\begin{tabular}{lcr}
\Xhline{0.8pt}
$\# x$ &   $\lambda/\nu$    & spin LKTs  \\
\Xhline{0.65pt}

$152$         & $[a-1, b-1, 2,-1,3,0]$  & $[1, a, 0, b-1, 2, a+3]$, $a,b\geq 1$ \\
$\textbf{155}$& $[-1,-1,1, -\frac{1}{2},\frac{3}{2},-\frac{1}{2}]$ & $[0, a-1, 1, b-1,2, a+9]$, $a,b\geq 1$\\
\cline{1-1}

$151$           & $[a-1, b-2, 0, 3,-1,2]$ & $[0, a-1, 1, b-1, 2, a+5]$, $a,b\geq 1$ \\
$\textbf{163}$  & $[-\frac{1}{2},-\frac{3}{2},-\frac{1}{2},\frac{3}{2},-\frac{1}{2},1]$  & $[1, a, 0, b-1,2, a-1]$, $a,b\geq 1$\\
\cline{1-1}

$208$           & $[a,2,c-1,0,1,2]$  & $[0, a-1, c+1, 0,0, a+2c+9]$, $a,c\geq 1$ \\
$\textbf{226}$  & $[0,\frac{3}{2},-\frac{3}{2},-\frac{1}{2}, 1, 1]$ & $[1, a-1, c,1, 0,  a+2c+6]$, $a,c\geq 1$\\
\cline{1-1}

$179$          & $[a,2,c-1,0,1,2]$  & $[1, a-1, c, 0,0, a+2c+18]$, $a,c\geq 1$ \\
$\textbf{197}$ & $[0,\frac{3}{2},-\frac{3}{2},0,0,\frac{3}{2}]$ & $[2, a-1, c-1, 0,1,  a+2c+21]$, $a,c\geq 1$\\
\cline{1-1}

$156$          & $[a, 2, c-1,-1,3,0]$  & $[2, a-1, c-1, 0,1, a+2c+13]$, $a,c\geq 1$ \\
$\textbf{161}$ & $[0,1,-1,-\frac{1}{2},\frac{3}{2},-\frac{1}{2}]$ & $[2, a-1, c-1,  1,0,  a+2c+7]$, $a,c\geq 1$\\
\cline{1-1}

$150$          & $[a, 0, c-2, 3,-1,2]$  & $[2, a-1, c-1, 0, 1, a+2c+17]$, $a,c\geq 1$  \\
$\textbf{164}$ & $[0,-\frac{1}{2},-\frac{3}{2},\frac{3}{2},-\frac{1}{2},1]$ & $[2, a-1, c-1, 1, 0, a+2c+11]$, $a,c\geq 1$ \\
\cline{1-1}

$148$          & $[a-1, 0, 0, 1, 3, f-2]$  & $[f-1, 0, 1, 0, a-1, -2a-f-23]$ \\
$\textbf{153}$ & $[-1,0,0,0,2,-2]$ & $[f-1, 0, 1, 0, a+1, -2a-f-11]$\\
\cline{1-1}

$204$ & $[a-1, b-2, 1,3,-1,2]$ &  $[1, a+1, 0, b, 1, a-2]$, $b\geq 1$ \\
$\textbf{220}$ & $[-\frac{3}{2},-2,1,1,-\frac{1}{2},\frac{3}{2}]$ &
$[0, a+1,0,b+1,0,a+1]$, $a\geq 1$ \\
\cline{1-1}

$202$ & $[a-1, b-1, 2,-1,3,1]$ &  $[0, a, 0, b+1, 0,a+8]$, $b\geq 1$ \\
$\textbf{221}$ & $[-\frac{3}{2},-\frac{3}{2},\frac{3}{2},-\frac{1}{2},1,1]$ &
$[0, a-1,1,b,1,a+11]$, $a, b\geq 1$ \\
\cline{1-1}

$171$ & $[a-1, b-1, 2,1,0,2]$  &  $[0, a, 0, b+1, 1, a+10]$, $b\geq 1$ \\
$\textbf{190}$ & $[-\frac{3}{2},-\frac{3}{2},\frac{3}{2},0,0,\frac{3}{2}]$ &$[0, a-1, 1, b, 2, a+13]$, $a\geq 1$\\
\cline{1-1}

$170$ & $[a-1, b-1, 2,0,1,2]$ &  $[1, a+1, 0, b-1, 2, a-4]$, $b\geq 1$ \\
$\textbf{187}$ & $[-\frac{3}{2},-\frac{3}{2},\frac{3}{2},0,0,\frac{3}{2}]$  &$[0, a+1, 0, b, 1, a-1]$, $a, b\geq 1$ \\
\cline{1-1}

$209$ & $[a, 1, c-2,3,-1,2]$ &  $[0,  a-1, c+1, 0,0,    a+2c+17]$, $a\geq 1$\\
$\textbf{225}$ & $[0,1, -2,1,-\frac{1}{2}, \frac{3}{2}]$  & $[1,a-1,c,0,1,a+2c+20]$, $a, c\geq 1$\\
\cline{1-1}

$175$ & $[a, 2, c-1,1,0,2]$  &  $[2, a-1, c, 1, 0, a+2c+5]$, $a\geq 1$ \\
$\textbf{195}$ & $[0, \frac{3}{2},-\frac{3}{2},0,0,\frac{3}{2}]$  &$[1, a-1, c+1, 0, 0, a+2c+8]$, $a, c\geq 1$ \\
\cline{1-1}

$212$ & $[a-1, 1,1,0,3, f-2]$  &  $[f-1, 0, 1, 0, a, -2a-f-21]$, $f\geq 1$ \\
$\textbf{213}$ & $[-\frac{3}{2},1,1,-1,2,-2]$  &$[ f-1, 0, 1, 0, a+1, -2a-f-15]$, $a, f\geq 1$ \\
\cline{1-1}

$185$ & $[a-1, 2, 2, -1,2, f-1]$  &  $[0, a, 0, 0, f, a+2f+24]$, $a\geq 1$ \\
$\textbf{186}$ & $[ -\frac{3}{2},\frac{3}{2},\frac{3}{2},-\frac{3}{2},\frac{3}{2},-\frac{3}{2}]$  &$[ 0, a-1, 1, 0, f-1, a+2f+27]$, $a, f\geq 1$ \\
\cline{1-1}

$183$ & $[a-1, 2, 2, -1,2, f-1]$  &  $[0, a-1, 1, 0, f+2, a+2f+9]$, $a\geq 1$\\
$\textbf{184}$ & $[ -\frac{3}{2},\frac{3}{2},\frac{3}{2},-\frac{3}{2},\frac{3}{2},-\frac{3}{2}]$  &$[0, a, 0, 0, f+2, a+2f+12]$, $a, f\geq 1$ \\
\cline{1-1}

$181$ & $[a-1, 2, 2, -1,2, f-1]$  &  $[f+2, a-1, 0, 1, 0, a-f+9]$, $a\geq 1$\\
$\textbf{182}$ & $[ -\frac{3}{2},\frac{3}{2},\frac{3}{2},-\frac{3}{2},\frac{3}{2},-\frac{3}{2}]$  &$[f+2, a, 0, 0, 0, a-f+6]$, $f\geq 1$ \\
\Xhline{0.8pt}
\end{tabular}
\label{table-EIII-string-part-card4-part2}
\end{table}

\begin{table}[H]
\centering
\caption{Strings of $\widehat{E_{6(-14)}}^{\mathrm{d}}$ with $|{\rm supp}(x)|=5$: part one (folded)}
\begin{tabular}{lcr}
\Xhline{0.8pt}
$\# x$ &   $\lambda/\nu$    & spin LKTs  \\
\Xhline{0.65pt}

$361$          & $[a-2,2,2,0,1,0]$  & $[0,a-1,0, 2,0,a+15]$, $a\geq 1$\\
$\textbf{358}$ &$[-3,\frac{3}{2},\frac{3}{2},0,0,0]$  & $[0,a-1,2,0,0,a+3]$, $a\geq 1$\\
\cline{1-1}

$359$          & $ [a-2,2,-1,3,-1,2]$  & $[0,a-1,0, 2,0,a+11]$, $a\geq 1$\\
$\textbf{374}$ & $[-2,1,-1,2,-1,1]$  & $[0,a-1,2,0,0,a-1]$, $a\geq 1$\\
\cline{1-1}

$357$          & $ [a-1,1,1,0,2,1]$  & $[0,a,2,0,0,a-4]$\\
 & $[-\frac{5}{2},1,0,-\frac{1}{2},\frac{5}{2},0]$  & $[0,a+1,1,0,0,a-1]$, $a\geq 1$\\
$\textbf{363}$               &  & $[0,a+2,0,0,0,a+2]$, $a\geq 1$\\
\cline{1-1}

$353$ & $ [a-2,-1,2,2,0,1]$  & $[0,a-1,0,2,0,a+19]$, $a\geq 1$\\
$\textbf{364}$ & $[-3,-1,1,2,-1,1]$  & $[0,a-1,2,0,0,a+7]$, $a\geq 1$\\
\cline{1-1}

$352$          & $ [a-1,1,1,0,2,1]$  & $[0,a+1,0,0,0,a+17]$\\
 & $[-3,0,1,-\frac{1}{2},\frac{5}{2},0]$  & $[0,a-1,0,2,0,a+23]$, $a\geq 1$\\
 $\textbf{355}$              &  & $[0,a,0,1,0,a+20]$, $a\geq 1$\\
\cline{1-1}

$312$          & $[a-1,3,1,-1,2,1]$  & $[0,a,0,2,0,a+8]$\\
               & $[-2,3,1,-2,1,1]$   & $[0,a,1,1,0,a+2]$, $a\geq 1$\\
$\textbf{329}$ &                     & $[0,a+1,0,1,0,a+5]$, $a\geq 1$\\
\cline{1-1}

$311$          & $[a-2,1,3,-1,2,1]$  & $[0,a,1,0,0,a+14]$\\
               & $[-3,1,3,-2,1,1]$  & $[0,a-1,1,1,0,a+17]$, $a\geq 1$\\
$\textbf{327}$ &                    & $[0,a-1,2,0,0,a+11]$, $a\geq 1$\\
\cline{1-1}

$309$           & $[a-1,3,1,1,-1,2]$  & $[1,a+1,1,0,0,a-2]$\\
$\textbf{322}$  & $[-\frac{5}{2},1,0,2,-\frac{5}{2},\frac{5}{2}]$  & $[1,a+2,0,0,0,a+1]$, $a\geq 1$\\
\cline{1-1}

$305$           & $[a-2, 1,3,1,-1,2]$  & $[0,a+1,0,0,1,a+19]$\\
$\textbf{315}$  & $[-3,0,1,2,-\frac{5}{2},\frac{5}{2}]$  & $[0, a, 0, 1, 1, a+22]$, $a\geq 1$\\
\cline{1-1}

$301$          & $[a-1,1,1,1,0,3]$  & $[1,0,0,0,a+1,-2a-23]$\\
$\textbf{319}$ & $[-\frac{5}{2},0,0,2,-2,3]$  & $[1,0,0,0,a+2,-2a-17]$, $a\geq 1$\\
\cline{1-1}

$262$           & $[a-1,1,1,0,1,3]$  & $[1,0,0,0,a,-2a-25]$\\
$\textbf{278}$  & $[-2,1,1,-1,0,3]$  & $[1,0,0,0,a+2,-2a-13]$, $a\geq 1$\\
\cline{1-1}

$238$            & $[a-1,2,2,-1,1,2]$  & $[0,0,0,0,a,-2a-28]$\\
$\textbf{249}$   & $[-2,2,2,-2,0,2]$  & $[1,0,0,0,a-1,-2a-31]$, $a\geq 1$\\
\cline{1-1}

$237$          & $[a-1,2,2,-1,1,2]$  & $[1,a,0,0,0,a+23]$, $a\geq 1$\\
$\textbf{250}$ & $[-2,2,2,-2,0,2]$  & $[1,a-1,0,1,0,a+26]$, $a\geq 1$\\
\cline{1-1}

$236$          & $[a-1,2,2,-1,1,2]$ & $[1,0,0,0,a+3,-2a-7]$\\
$\textbf{247}$ & $[-2,2,2,-2,0,2]$   & $[0,0,0,0,a+3,-2a-10]$, $a\geq 1$\\
\cline{1-1}

$234$          & $[a-1,2,2,-1,1,2]$  & $[0, a+1, 1, 0, 1, a-7]$\\
$\textbf{245}$ & $[-2,2,2,-2,0,2]$ & $[0, a+2, 0, 0, 1, a-4]$, $a\geq 1$\\
\cline{1-1}

$211$          & $[a-1,2,2,-2,3,0]$  & $[1, a-1, 0, 1, 1, a+12]$, $a\geq 1$ \\
$\textbf{215}$ & $ [-\frac{3}{2}, \frac{3}{2}, \frac{3}{2},-2,2,-1]$ & $[1, a-1, 1, 0, 1, a+6]$, $a\geq 1$\\
\cline{1-1}

$199$ & $[a-1,0,0,1,1,3]$  & $[1, 0, 0, 0, a-1, -2a-27]$, $a\geq 1$ \\
$\textbf{219}$ & $[-\frac{3}{2},0,0,0,0,3]$ & $[1, 0, 0, 0, a+2, -2a-9]$, $a\geq 1$\\
\cline{1-1}

$318$ &  $[2,b-2,-1,2,2,1]$ & $[0, 0, 0, b+2, 0, 4]$ \\
$\textbf{320}$ &  $[2,-3,-1,1,1,1]$  &  $[0, 1, 0, b+1, 1, 7]$, $b\geq 1$ \\
\cline{1-1}

$270$          &  $[2, b-1,1,-1,3,1]$  & $[0,0,0,b+1,1,6]$, $b\geq 1$ \\
$\textbf{273}$ &  $[2,-2,0,-\frac{1}{2},1, \frac{3}{2}]$ &$[0,1,0,b,2,9]$, $b\geq 1$ \\
\cline{1-1}

$241$ & $[2,b-1,1,1,0, 2]$  & $[ 0,1, 0, b, 3, 11]$\\
$\textbf{242}$ & $[2,-2,0,0,0,2]$ & $[0, 0, 0, b+1, 2, 8]$, $b\geq 1$\\
\cline{1-1}

$205$ & $[0,b-1,3,-1,1,2]$  & $[0,1,0,b-1,3,-1]$, $b\geq 1$ \\
$\textbf{218}$ & $[-1,-\frac{3}{2},2,-\frac{1}{2},0,\frac{3}{2}]$ & $[1,0,0, b-1,3,-7]$, $b\geq 1$ \\

\Xhline{0.8pt}
\end{tabular}
\label{table-EIII-string-part-card5-part1}
\end{table}

\begin{table}[H]
\centering
\caption{Strings of $\widehat{E_{6(-14)}}^{\mathrm{d}}$ with $|{\rm supp}(x)|=5$: part two}
\begin{tabular}{rccr}
\Xhline{0.8pt}
$\# x$ &   $\lambda$   & $\nu$ & spin LKTs  \\
\Xhline{0.65pt}

$432$ & $ [a,1,1,1,1,1]$ & $[-5,\frac{3}{2},\frac{3}{2},0,2,0]$ & $[0,a+3,0, 0,0,a+11]$\\

$348$ & $[a,1,1,1,1,1]$ & $[-\frac{7}{2}, 0, 0, 0, 3, 1]$ & $[0,0,0,0,a+3,-2a-22]$ \\

$286$ & $[a-1,1,0,2,-1,2]$ & $ [-2,0,0,2,-2,2]$ & $[0, a, 0, 0, 0, a+20]$, $a\geq 1$ \\

$283$ & $[a-1,0,1,2,-1,2]$ & $ [-2,0,0,2,-2,2]$ & $[0, a+2, 0, 0, 0, a-2]$, $a\geq 1$ \\

$370$ &  $[1,b,1,1,1,1]$ &  $[1,-4,\frac{3}{2},0,\frac{3}{2},1]$ &  $[0,0,0,b+3,0,0]$\\

$272$ &  $[2,b-1,0,1,0,2]$ &  $[\frac{3}{2},-2,-\frac{1}{2},1,-\frac{1}{2},\frac{3}{2}]$ & $[1,1,0,b-1,2,0]$, $b\geq 1$ \\

$244$ &  $[1,b,1,1,1,1]$ &  $[\frac{5}{2},-\frac{5}{2},0,0,0,\frac{5}{2}]$ &  $[0,b+2,0,0,0,-3b-30]$\\

$243$ &  $[1,b,1,1,1,1]$ &  $[\frac{5}{2},-\frac{5}{2},0,0,0,\frac{5}{2}]$ &  $[b+2,0,0,0,0,3b+30]$\\

\cline{1-4}
$217$ & $[2,b-2,-1,3,-1,2]$ & $[1,-2,-1,2,-1,1]$ & $[1, 0, 0, b-1, 3, -3]$, $b\geq 1$ \\
& & & $[0, 1, 0, b-1, 3, 3]$, $b\geq 1$ \\
\cline{1-4}

$222$ & $[1,1,c,1,1,1]$ & $[1,2,-\frac{5}{2},0,0,2]$ & $[0,c+2,0,0,c+3,-c-8]$\\

$224$ & $[1,1,1,1,e,1]$ & $[2, 2,0,0,-\frac{5}{2},1]$ & $[e+2,0,0,0,e+3,e+8]$\\

$435$ & $[1,1,1,1,1, f]$ & $[0,\frac{3}{2},2,0,\frac{3}{2},-5]$ & $[ f+3, 0, 0, 0, 0, -f-11]$ \\

$371$ & $[1,1,1,1,1, f]$ & $[1,0,3,0,0,-\frac{7}{2}]$ & $[0, 0, 0, 0, f+3, 2f+22]$ \\

$293$ & $[2,1,-1,2,0,f-1]$ & $[2,0,-2,2,0,-2]$ & $[f, 0, 0, 0, 0, -f-20]$, $f\geq 1$ \\

$288$ & $[2,0,-1,2,1,f-1]$ & $[2,0,-2,2,0,-2]$ & $[f+2, 0, 0, 0, 0, -f+2]$, $f\geq 1$ \\
\Xhline{0.8pt}
\end{tabular}
\label{table-EIII-string-part-card5-part2}
\end{table}

\medskip
\centerline{\scshape Funding}
Dong was supported by NSFC grant 11571097 (2016-2019). He is partially supported by NSFC grant 11771272.

\medskip
\centerline{\scshape Acknowledgements}
We thank Dr.~Bai sincerely for guiding us through Section \ref{sec-exam}, and Daniel Wong for helping us with Remark \ref{rmk-Wallach}.
The authors are deeply grateful to the \texttt{atlas} mathematicians for many things.

\end{document}